\newtheorem{theorem}{Theorem}
\theoremstyle{plain}
\newtheorem{corollary}{Corollary}
\newtheorem{definition}{Definition}
\newtheorem{example}{Example}
\newtheorem{lemma}{Lemma}
\numberwithin{equation}{section}
\numberwithin{theorem}{section}  
\numberwithin{proposition}{section}  
\numberwithin{lemma}{section}  
\numberwithin{corollary}{section}
\begin{document}
\title[Velocity Overshoot Criterion of a Body in a Kinetic Gas]{Velocity
Reversal Criterion of a Body Immersed in a Sea of Particles}
\author{Xuwen Chen}
\address{Department of Mathematics, Brown University, Providence, RI 02912}
\email{chenxuwen@math.brown.edu}
\urladdr{http://www.math.brown.edu/\symbol{126}chenxuwen/}
\author{Walter Strauss}
\address{Department of Mathematics and Lefschetz Center for Dynamical
Systems, Brown University, Providence, RI 02912}
\email{wstrauss@math.brown.edu}
\urladdr{http://www.math.brown.edu/\symbol{126}wstrauss/}
\date{v2, 11/19/2014}
\subjclass[2010]{70F45, 35R35, 35Q83, 70F40}
\keywords{free boundary, kinetic theory, diffusive reflection}

\begin{abstract}
We consider a rigid body colliding with a continuum of particles. We assume
that the body is moving at a velocity close to an equilibrium velocity $%
V_{\infty }$ and that the particles colliding with the body reflect
diffusely, that is, probabilistically with some probablility distribution $K$%
. We find a condition that is sufficient and almost necessary that the
collective force of the colliding particles reverses the relative velocity $%
V(t)$ of the body, that is, changes the sign of $V(t)-V_{\infty }$, before
the body approaches equilibrium. Examples of both reversal and irreversal
are given. This is in strong contrast with the pure specular reflection case
in which only reversal happens.
\end{abstract}

\maketitle

\section{Introduction}

The problem that we are considering has a free boundary, the location of the
body. The other unknown is the configuration of the particles. The particles
may collide with the body elastically or diffusely. Boundary interactions in
kinetic theory are very poorly understood, even when the boundaries are
fixed. Free boundaries are even more difficult. For this reason we have
chosen to consider only the \textit{simplest} problem of this type, namely,
we assume the particles are identical and are rarefied, that is, do not
interact among themselves but only with the body. We assume that the whole
system, consisting of the body and the particles, starts out rather close to
an equilibrium state.

We consider classical particles that are extremely numerous. While one could
consider modeling them as a fluid, we instead model them as a continuum like
in kinetic (Boltzmann, Vlasov) theory \cite{Glassey, Sone, Spohn} but
without any self-interaction. Our focus is on the interaction of the
particles with the body at its boundary. In typical physical scenarios this
interaction can be quite complicated. For instance, the boundary may be so
rough that a particle may reflect from it in an essentially random way.
There could even be some kind of physical or chemical reaction between the
particle and the molecules of the body.

The present paper is a sequel to \cite{CS-1} and is also highly motivated by
the series of papers \cite{Ital1, Ital2, Ital3}. In all four papers the
initial velocity $V(0)$ of the body is close to its terminal (equilibrium)
velocity $V_{\infty }>0$. In \cite{Ital1, Ital3, CS-1} the body approaches
its equilibrium velocity in such a way that $V(t)<V_{\infty }$ for all time $%
t$. On the other hand, in the paper \cite{Ital2} the body's initial velocity 
$V(0)$ satisfies $V(0)>V_{\infty }$ and after a certain time its velocity
switches to $V(t)<V_{\infty }$ before approaching its equilibrium $%
V(t)\rightarrow V_{\infty }$ as $t\rightarrow \infty $. In \cite{Ital2} all
the particles reflect elastically (specularly).

The purpose of the present paper is to analyze the effect of inelastic
(diffusive) collisions given by a probability distribution $K$ and to
determine conditions on $K$ so that the velocity of the body reverses or
does not reverse, that is, $V(t)-V_{\infty }$ does or does not change sign.
We discover that there are diffusive collision laws that lead to reversal
and others that lead to irreversal, no matter what $V_{\infty }$ and $V(0)$
are, so long as they are close together. These laws are almost exact
opposites of each other. In particular, the existence of an irreversal case
for $0<V_{\infty }<V(0)$ is in direct contrast to the purely specular
collision case in \cite{Ital2} where only reversal takes place.

Moreover, in the present paper we prove that, regardless of whether the
velocity is reversed or not, the equilibrium is ultimately approached at the
same polynomial rate as in \cite{CS-1}. This rate is $O(t^{-d-p})$ in $d$
spatial dimensions where $p$ could take any value in $\left( 0,2\right] $,
depending on the specific law of reflection given by $K$. Though purely
diffuse collisions with a Gaussian kernel were considered in \cite{Ital3}
and both diffuse and elastic collisions were considered in \cite{CS-1}, in
both of those papers there was no reversal of the velocity. 
% because they assume $V_{0}<V_{\infty }$%
Some discussion of the physical motivation of this type of problem can be
found in \cite{CS-1} and the other cited references. Some closely related
investigations are \cite{ATC,Cav,CM2,Sisti,TA}.

To be specific, here we consider the following problem. The body is a
cylinder $\Omega (t)\subset \mathbb{R}^{d}$. We write $\mathbf{x}%
=(x,x_{\perp }),\ x_{\perp }\in \mathbb{R}^{d-1}$. The cylinder is parallel
to the $x$-axis and the body is constrained to move only in the $x$
direction with velocity $V(t)$. There may be a constant horizontal force $%
E\geq 0$ acting on the body, as well as the horizontal force $F(t)$ due to
all the colliding particles at time $t$. Thus 
\begin{equation*}
\frac{dX}{dt}=V(t),\quad \frac{dV}{dt}=E-F(t),
\end{equation*}%
If $E=0$, then the body is at rest in equilibrium ($V_{\infty }=0$), while
if $E\neq 0$, then $V_{\infty }\neq 0$ is given by $F_{0}(V_{\infty })=E$,
where $F_{0}(V)$ is the fictitious force in case no particle collides more
than once (see (\ref{eqn:F_0}) below). In order to avoid confusion in this
paper, we shall take $0\leq V_{\infty }<V(0)$.

We introduce the following notation. The velocity of a particle is $\mathbf{v%
}=(v_{x},v_{\perp })$, where $v_{x}=\mathbf{v}\cdot \mathbf{i}$ is the
horizontal component and $v_{\perp }\in \mathbb{R}^{d-1}$. The particle
distribution, denoted by $f(t,\mathbf{x},\mathbf{v})$, satisfies $\partial
_{t}f+\mathbf{v}\cdot \nabla _{\mathbf{x}}f=0$ in $\Omega ^{c}(t)$. We
assume the initial velocity $f(0,\mathbf{x},\mathbf{v})=f_{0}(\mathbf{v})$
depends only on $\mathbf{v}$ and is even in $v_{x}$. We also denote the
densities before and after a collision with the body by $f_{\pm }(t,\mathbf{x%
},\mathbf{v})=\lim_{\epsilon \rightarrow 0^{+}}f(t\pm \epsilon ,\mathbf{x}%
\pm \epsilon \mathbf{v},\mathbf{v})$. The assumed law of reflection at the
two ends of the cylinder is 
\begin{equation}
f_{+}(t,\mathbf{x};\mathbf{v}) = \int_{(u_{x}-V\left( t\right)
)(v_{x}-V\left( t\right) )\leq 0}K\left( \mathbf{v}-\mathbf{i}V\left(
t\right) ;\mathbf{u}-\mathbf{i}V\left( t\right) \right) f_{-}(t,\mathbf{x};%
\mathbf{u})d\mathbf{u},
\end{equation}%
where $\mathbf{i}$ is the unit vector in the $x$-direction. The collision
kernel $K$ is assumed to be even and satisfy the conservation of mass
condition \eqref{mass conserv} below. Furthermore, $K$ and the initial
density $f_{0}$ are assumed to satisfy Assumptions A1-A4 in Section \ref%
{Sec:AssumptionsOnKernels}. Among these conditions are $f_0(\mathbf{v}) =
a_0(v_x) b(v_\perp)$ and 
\begin{equation*}
K(\mathbf{v,u})=k(v_{x},u_{x})b(v_{\bot }),\quad c\left\vert
u_{x}\right\vert ^{p}\leqslant \ \int_{v_{x}\geq 0}\ v_{x}^{2}\
k(v_{x},u_{x})\ dv_{x}\ \leqslant C\left\vert u_{x}\right\vert ^{p}
\end{equation*}%
for some constants $c,C,p$ and some function $b(v_{\perp })$ where $0<p\leq
2 $.

%%%%%%%%%   IRREVERSAL   %%%%%%%%%%%%%%

\begin{theorem}[Irreversal]
\label{ThIRExistence} Let $K$ be a collision kernel as above and let $f_{0}$
be the initial particle distribution. Let the initial velocity $V_{0}$ of
the cylinder be slightly larger than $V_{\infty }$; that is, $0\leqslant
V_{\infty }<V_{0}<V_{\infty }+\gamma $, where $\gamma $ is sufficiently
small. Assume the Irreversal Criterion 
\begin{equation}
\int_{0}^{\infty }k(0,z)\ a_{0}(z+V_{\infty })\ dz>a_{0}(V_{\infty }).
\label{IRREVcrit}
\end{equation}

(a) Then there exists at least one solution $(V(t),f(t,x,v))$ of our problem
in the following sense. $V\in C^{1}(\mathbb{R})$ and $f_{\pm }\in L^{\infty
} $ for $t\in \lbrack 0,\infty ),x\in \partial \Omega (t),v\in \mathbb{R}%
^{3} $, where the force $F(t)$ on the cylinder is given by \eqref{Force}
below %R_W(t)=r^{R}(t)+r^{L}(t),
%r^{R}\left( t\right) =\int_{\partial \Omega _{R}\left( t\right) }dS_{\mathbf{%
%x}}\int_{u_{x}\leqslant V\left( t\right) }d\mathbf{u}\ \ell(u_{x}-V(t))\left\{
%f_{-}(t,\mathbf{x},\mathbf{u})-f_{0}(\mathbf{u})\right\}
and the pair of functions $f_{\pm }(t,x,v)$ are (almost everywhere) defined
explicitly in terms of $V(t)$ and $f_{0}(x,v)$.

(b) Furthermore, every solution of the problem (in the sense stated above)
satisfies the estimates 
\begin{equation}
0<\gamma e^{-B_{0}t}+\frac{c\gamma ^{p+1}}{t^{d+p}}\chi \{t\geq
t_{0}+1\}<V(t)-V_{\infty }<\gamma e^{-B_{\infty }t}+\frac{C\gamma ^{p+1}}{%
(1+t)^{d+p}}  \label{IrrEst}
\end{equation}%
for $0<t<\infty $ and for some positive constants $c$, $C$, $B_{0}$, $%
B_{\infty }$ and $t_{0}$ that will be specified later. Notice that there is
no velocity reversal because $V(t)>V_{\infty }$ for all $t>0$.
\end{theorem}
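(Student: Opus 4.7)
The plan is to reduce the free-boundary kinetic problem to a scalar nonlinear integral equation for $U(t):=V(t)-V_{\infty }$, establish existence and the upper bound in (\ref{IrrEst}) by a contraction-mapping argument in a weighted space, and then use the Irreversal Criterion (\ref{IRREVcrit}) in a sign analysis of the leading-order recollision contribution to produce the lower bound and the irreversal statement.

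\emph{Reduction to a scalar equation.} The transport equation $\partial _{t}f+\mathbf{v}\cdot \nabla _{\mathbf{x}}f=0$ in $\Omega ^{c}(t)$ together with the boundary condition at the two flat ends admits an explicit characteristic representation: each backward trajectory that reaches a body face at an earlier time $s<t$ picks up a factor of $k$ and a frame-shift by $V(s)$, while each trajectory that escapes to $t=0$ carries $f_{0}$. Iterating yields $f_{\pm }=\sum _{n\geq 0}f_{\pm }^{(n)}[V]$, where $f_{\pm }^{(n)}$ collects trajectories with exactly $n$ prior collisions and is a functional of $V|_{[0,t]}$ through implicitly defined collision times. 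Inserting this into the momentum integral for $F(t)$, and using $F_{0}(V_{\infty })=E$ together with $B_{\infty }:=F_{0}^{\prime }(V_{\infty })>0$, gives the Duhamel equation
$$
U(t)=\gamma e^{-B_{\infty }t}+\int _{0}^{t}e^{-B_{\infty }(t-s)}\big(N(U)(s)-R[V_{\infty }+U](s)\big)ds,
$$
where $N(U)=O(U^{2})$ collects the nonlinear part of $F_{0}$ and $R[V]$ collects all $n\geq 1$ recollision terms.

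\emph{Existence and upper bound.} For $\gamma $ small, I would run a contraction-mapping argument in the Banach space of continuous $U:[0,\infty )\to \mathbb{R}$ with
$$
\Vert U\Vert :=\sup _{t\geq 0}\frac{|U(t)|}{\gamma e^{-B_{\infty }t}+\gamma ^{p+1}(1+t)^{-d-p}}.
$$
The two key a priori ingredients are the quadratic bound $|N(U)(t)|\lesssim \Vert U\Vert ^{2}\cdot(\text{weight})^{2}$, and the recollision estimate $|R[V_{\infty }+U](t)|\lesssim \gamma ^{p+1}(1+t)^{-d-p}$. The latter follows from the kernel bound $\int v_{x}^{2}k(v_{x},u_{x})\,dv_{x}\leq C|u_{x}|^{p}$ together with the geometric dispersion of reflected particles in $d$ spatial dimensions, which supplies the $(1+t)^{-d}$ factor. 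This yields $V\in C^{1}(\mathbb{R})$, $f_{\pm }\in L^{\infty }$, and the upper bound in (\ref{IrrEst}), proving (a) and one half of (b).

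\emph{Irreversal and lower bound.} To rule out reversal, I would expand $R[V_{\infty }+U](t)$ to leading order about $U\equiv 0$. The one-recollision contribution, evaluated at (almost) zero relative velocity, produces a principal term of the form
$$
R_{\text{lead}}(t)=-\frac{c_{d}\,\gamma ^{p+1}}{(1+t)^{d+p}}\Big(\int _{0}^{\infty }k(0,z)\,a_{0}(z+V_{\infty })\,dz-a_{0}(V_{\infty })\Big)+\text{remainder},
$$
with $c_{d}>0$ and remainder of size $o(\gamma ^{p+1}(1+t)^{-d-p})$ for $\gamma $ small. Under (\ref{IRREVcrit}) the parenthesized factor is strictly positive, so $R_{\text{lead}}(t)<0$ and the Duhamel formula feeds a strictly positive, power-decaying contribution into $U(t)$. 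A bootstrap combining this with a Gronwall estimate on short time intervals yields $U(t)>\gamma e^{-B_{0}t}+c\gamma ^{p+1}(1+t)^{-d-p}\chi \{t\geq t_{0}+1\}$, i.e.\ the full lower bound in (\ref{IrrEst}), and in particular $V(t)>V_{\infty }$ for every $t>0$.

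\emph{Main obstacle.} The hardest step is the sign analysis of $R_{\text{lead}}$: one must identify the precise leading coefficient of the $n=1$ recollision term, verify that the dominant balance is exactly the combination appearing in (\ref{IRREVcrit}), and uniformly absorb the higher-order corrections (from $n\geq 2$ recollisions and from perturbing the implicit collision times $s_{1}(t)$ in $U$) without flipping the sign. This is what makes (\ref{IRREVcrit}) both sufficient and essentially necessary for irreversal, so any slack in the expansion would damage the sharpness of the criterion.
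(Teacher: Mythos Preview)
Your overall strategy is sound and identifies the right mechanism: the Irreversal Criterion forces the one-recollision force to have a definite sign, which through the Duhamel formula produces the positive power-law tail in $V(t)-V_\infty$. However, two aspects differ from the paper's argument and one of them is a genuine gap.

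\textbf{Contraction versus Schauder.} You propose a contraction-mapping argument, but the recollision force $R[V]$ depends on $V$ through implicitly defined collision times and through the characteristic functions $\chi_0,\chi_1$ that switch when recollision configurations change. Establishing Lipschitz continuity of $V\mapsto R[V]$ in your weighted norm is therefore not automatic, and you do not supply it; the a~priori size bound $|R|\lesssim\gamma^{p+1}(1+t)^{-d-p}$ is not enough. The paper avoids this by using Schauder's fixed point theorem on a compact convex set $\mathcal{K}\subset\mathcal{W}$, for which only continuity of $W\mapsto V_W$ is required; that continuity is obtained by truncating to at most $N$ collisions on $[0,T]$, controlling the tail by $(C\gamma^{p+1})^N$, and passing $N$ to infinity.

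\textbf{Organization of the bounds.} You obtain the upper bound first and then bootstrap the lower bound via a sign analysis of $R_{\mathrm{lead}}$. The paper instead builds both bounds simultaneously into the definition of the class $\mathcal{W}$ (functions trapped between $V_\infty-\gamma g$ and $V_\infty-\gamma h$), and the iteration map is $W\mapsto V_W$ where $V_W$ solves $\dot V_W=Q(t)(V_\infty-V_W)-R_W(t)$ with the \emph{variable} coefficient $Q(t)=\frac{F_0(V_\infty)-F_0(W(t))}{V_\infty-W(t)}\in[B_\infty,B_0]$ rather than the constant $B_\infty$. The sign is extracted not by a perturbative expansion of $R$ but by a direct pointwise bound: the Irreversal Criterion plus A4 yields $f_+(t,\mathbf{x},\mathbf{v})\geq (a_0(V_\infty)+\delta)\,b(v_\perp)$ for $|v_x-V_\infty|\leq 2\gamma$, which immediately gives $r_W^L(t)\leq -c\gamma^{p+1}t^{-d-p}\chi\{t\geq t_0\}$. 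This is cleaner than your remainder control and makes the ``essentially necessary'' claim transparent. Your identified main obstacle is correct, but the paper's resolution is pointwise rather than asymptotic.
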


\begin{theorem}[Reversal]
\label{ThRExistence} Given the same situation as above, except that we now
assume the Reversal Criterion 
\begin{equation}  \label{REVcrit}
\int_0^\infty k(0,z) \ a_0(z+V_\infty) \ dz < a_0(V_\infty).
\end{equation}

(a) Then there exists at least one solution $(V(t),f(t,x,v))$ of our problem
in the sense given in part (a) of the preceding theorem.

(b) Furthermore, every solution of the problem (in the sense stated above)
satisfies the estimates 
\begin{equation}
\gamma e^{-B_{0}t}-\frac{c\gamma ^{p+1}}{(1+t)^{d+p}}<V(t)-V_{\infty
}<\gamma e^{-B_{\infty }t}-\frac{C\gamma ^{p+1}}{t^{d+p}}\chi \{t\geqslant
t_{0}+1\}  \label{RevEst}
\end{equation}%
for $0<t<\infty $ and for some positive constants $c$, $C$, $B_{0}$, $%
B_{\infty }$ and $t_{0}$ specified later. Notice that the velocity reversal
for sufficiently large $t$ is incorporated in this inequality.
\end{theorem}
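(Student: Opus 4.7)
The plan is to adapt the framework used for Theorem \ref{ThIRExistence}, with the sign in (\ref{REVcrit}) reversing the dominant recollision contribution. First, I would express $f_\pm(t,\mathbf{x},\mathbf{v})$ explicitly in terms of $V(\cdot)$ and $f_0$ by alternating free streaming with the reflection law at the two ends of the cylinder, and then substitute into the formula for the force $F(t)$. This splits $F(t)$ into a first-hit piece $F_0(V(t))$ and a sum of recollision pieces indexed by the number of prior collisions with the body. Setting $u(t)=V(t)-V_\infty$ and using $F_0(V_\infty)=E$, the equation $dV/dt=E-F(t)$ becomes
\[
u'(t) = -\beta\, u(t) \;-\; \mathcal{R}[u](t) \;-\; \mathcal{N}[u](t),
\]
where $\beta = F_0'(V_\infty)>0$ supplies the exponential friction, $\mathcal{R}$ is the linear-in-$u$ recollision functional, and $\mathcal{N}$ collects the higher-order and multi-recollision terms. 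A direct computation shows that the leading kernel of $\mathcal{R}$ has a coefficient proportional to $a_0(V_\infty) - \int_0^\infty k(0,z)a_0(z+V_\infty)\,dz$, which is strictly positive under (\ref{REVcrit}); hence $\mathcal{R}[u]$ carries the same sign as $u$ at past times, reinforcing the friction rather than opposing it as it did in Theorem \ref{ThIRExistence}.

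For part (a), I would establish existence by a contraction-mapping argument on the integrated form
\[
u(t) = \gamma e^{-\beta t} \;-\; \int_0^t e^{-\beta(t-s)}\bigl(\mathcal{R}[u](s)+\mathcal{N}[u](s)\bigr)\,ds,
\]
on the Banach space of continuous functions controlled by $|u(t)|\leq 2\gamma e^{-B_\infty t}+C'\gamma^{p+1}/(1+t)^{d+p}$ with the corresponding weighted norm. The hypothesis $c|u_x|^p\leq \int v_x^2\,k\,dv_x\leq C|u_x|^p$ provides the decay of the recollision kernel, $\mathcal{N}$ is quadratic in $u$, and smallness of $\gamma$ makes the right-hand side a contraction. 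Once $V$ is fixed, $f_\pm$ is recovered pointwise from $V$ and $f_0$ by its explicit formula, yielding a solution in the stated sense.

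For part (b), the two-sided bound follows by feeding the a priori envelope back into the integral equation. The transverse integration over velocities of a particle that bounced off at time $s$ and returned at time $t$ produces a factor $(t-s)^{-(d-1)}$, and the second-moment bound on $k$ contributes an additional $(t-s)^{-(1+p)}$, accounting for the $t^{-d-p}$ rate in both the upper and lower bounds. The negative sign of the leading recollision coefficient under (\ref{REVcrit}) forces the convolution to subtract from $\gamma e^{-\beta t}$, producing the $-C\gamma^{p+1}/t^{d+p}$ correction and a sign change of $u$ at some time $t_\ast = O(\log(1/\gamma))$; this is the source of the cutoff $\chi\{t\geq t_0+1\}$. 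The main obstacle will be the upper bound for $t$ past the reversal time: contributions to $\mathcal{R}[u](t)$ from $s>t_\ast$ (where $u(s)<0$) have the opposite sign from those with $s<t_\ast$, and one must show that the pre-reversal interval still dominates. I would handle this by splitting the convolution around $t_\ast$, exploiting the strict positivity of $a_0(V_\infty)-\int_0^\infty k(0,z)a_0(z+V_\infty)\,dz$ together with a bootstrap that controls $|u(s)|$ by the upper envelope, in direct parallel with the argument that establishes (\ref{IrrEst}) in Theorem \ref{ThIRExistence}.
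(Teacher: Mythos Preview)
Your proposal departs from the paper in two structural ways, and at least one of them hides a genuine gap. The paper does not linearize the recollision force into $\mathcal{R}[u]+\mathcal{N}[u]$; the dependence of $R_W$ on $W$ runs through the implicitly defined precollision times $\tau$ satisfying $\langle W\rangle_{\tau,t}=u_x$, which is not naturally linear in $u$, and nothing in your sketch justifies treating the remainder as ``quadratic''. More importantly, the decomposition that actually carries the argument is not pre-versus-post-reversal but \emph{left face versus right face}: the paper writes $R_W=r_W^L+r_W^R$ and shows that under (\ref{REVcrit}) one has $r_W^L\geq c\gamma^{p+1}/t^{d+p}$ for $t\geq t_0$, while $|r_W^R|\leq C\gamma^{(1+\varepsilon)(p+1)}A_+^{p+1}/t^{d+p}$ is smaller by a full factor $\gamma^{\varepsilon(p+1)}$. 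Extracting this extra $\gamma^\varepsilon$ on the right face is the real technical crux (it uses $t_0=K_0|\log\gamma|$ together with $B_\infty/B_0\to 1$ as $\gamma\to 0$), and it is what forces $R_W(t)\geq 0$ for \emph{all} $t\geq t_0$. The ``main obstacle'' you identify---sign competition between $s<t_*$ and $s>t_*$ in a convolution---never arises in the paper's scheme, because the sign of $R_W$ is settled independently of the sign of $u$; your proposal does not mention the left/right asymmetry at all.

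For part (a), the paper uses the Schauder fixed point theorem, not contraction: it shows the map $W\mapsto V_W$ takes a compact convex set $\mathcal{W}\subset C([0,\infty))$ into itself (via the $R_W$ bounds above) and is merely continuous (proved by truncating the number of collisions). A contraction argument would require a Lipschitz bound on $W\mapsto R_W$ with small constant, which the paper does not establish and which, given the implicit dependence on precollision times, would need a separate and nontrivial argument.
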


The contrasting criteria \eqref{IRREVcrit} and \eqref{REVcrit} have the
following interpretation. The body is initially moving to the right. Letting 
$u_{x}=z+V_{\infty }$, we see that the left side of both inequalities
represents the velocity density, after collisions on the left of the body,
of the particles with approximately the same velocity as the body. These are
the particles that are most likely to collide again later. The particles on
the right are less likely to collide with the body again because the body is
slowing down initially under the condition that $V_{0}>V_{\infty }$. In the reversal
case there will be fewer collisions on the left side compared with the
particles that do not collide. Therefore there are fewer future collisions
on the left, so that the body tends to move more to the left, and $V(t)$ has
more of a chance to cross over from being larger than $V_{\infty }$ to being
smaller. In the irreversal case, there are more such particles, so there are
more collisions on the left and the velocity of the body tends to remain
larger than $V_{\infty }$. Much more subtle, and not studied in this paper,
is the case when there is equality in \eqref{IRREVcrit} and \eqref{REVcrit}.

In case $0<V_{0}<V_{\infty }$, the body initially moves slower than the
equilibrium, so the particles on the right now play the critical role.
Adapting \eqref{IRREVcrit} and \eqref{REVcrit} to the right side of the
cylinder, the Irreversal Criterion becomes%
\begin{equation*}
\int_{-\infty }^{0}k(0,z)\ a_{0}(z+V_{\infty })\ dz>a_{0}(V_{\infty }),
\end{equation*}%
which was treated in \cite{CS-1}, while the Reversal Criterion becomes%
\begin{equation*}
\int_{-\infty }^{0}k(0,z)\ a_{0}(z+V_{\infty })\ dz<a_{0}(V_{\infty }),
\end{equation*}%
which has a very similar proof that we omit. In case $V_{\infty }=0$, the
criteria on the right and the left coincide due to the evenness of $K$ and $%
f_{0}$.

We now discuss the basic setup of the problem which is essentially the same
as in \cite{CS-1}, where more details and derivations may be found. The
kernel $k(v_{x},u_{x})$ is assumed to be nonnegative and even in each
variable. Conservation of mass requires that 
\begin{equation}
\int_{v_{x}\geqslant 0}v_{x}K(\mathbf{v,u})d\mathbf{v}=\left\vert
u_{x}\right\vert .  \label{mass conserv}
\end{equation}%
The total horizontal force on the body at time $t$ due to the particles is
given in terms of $f_{-}(t,\mathbf{x},\mathbf{v})$ as 
\begin{equation}
F(t)=\int_{\partial \Omega _{R}(t)\cup \partial \Omega _{L}(t)}dS_{\mathbf{x}%
}\int_{\mathbb{R}^{3}}d\mathbf{v}\ \text{sgn}(V(t)-v_{x})\ \ell (\mathbf{v}-%
\mathbf{i}V(t))\ f_{-}(t,\mathbf{x},\mathbf{v}),  \label{Force}
\end{equation}%
where we denote 
\begin{equation}
\ell (\mathbf{w})=w_{x}^{2}+\int_{v_{x}\geq 0}d\mathbf{v}\ v_{x}^{2}\ K(%
\mathbf{v},\mathbf{w}).  \label{ell}
\end{equation}%
On the lateral boundary $S$ of the cylinder we also assume a boundary
condition of the form 
\begin{equation*}
f_{+}(t,\mathbf{x};\mathbf{v})=\int_{\mathbf{u}\cdot \mathbf{n}_{\mathbf{x}%
}\leqslant 0}K_{S}\left( \mathbf{v};\mathbf{u}\right) f_{0}(\mathbf{u})d%
\mathbf{u.}
\end{equation*}%
together with the corresponding conservation of mass condition. Then no net
force is created on the lateral boundary (see \cite[Lemma 2.5]{CS-1}) and
the body continues to move horizontally.

In Section 2 we state the precise assumptions on the collision kernel $K$
and the initial particle distribution $f_{0}$, followed by several examples.
Example 1 is a Gaussian collision law of both $k$ and $a_{0}$, namely 
\begin{equation*}
a_{0}(u_{x}) = C_1e^{-\alpha u_{x}^{2}}, \quad k(v_{x},u_{x}) = C_2e^{-\beta
v_{x}^{2}}\left\vert u_{x}\right\vert .
\end{equation*}
If $V_\infty$ either vanishes or is small enough, then it satisfies the
Reversal Criterion if $\beta<\alpha$, while it satisfies the Irreversal
Criterion provided $\beta>\alpha$. If $V_\infty^2 > \frac\beta{2\alpha^2}$,
it satisfies the Reversal Criterion. Example 2 has a Gaussian kernel $K(v,u) 
$ like $\exp (-v_{x}^{2}/|u_{x}|)$, which means that colliding particles
with velocities close to that of the body deflect only a little, while
colliding particles with very different velocities reflect with a very wide
distribution of velocities. Example 3 is more general, permitting an
ultimate rate of approach to the body at the rate $O(t^{-d-p})$ for any $%
p\in (0,2]$.

Sections 3 and 4 are devoted to the proofs of the irreversal and reversal
cases, respectively. In each case a family $\mathcal{W}$ of possible body
motions $W$ is introduced. We write the force due to the possible motion $W$
as $F(t)=F_{0}(t)+R_{W}(t)$, where $R_{W}(t)$ is the force due to the
collisions occurring before time $t$ (\textquotedblleft precollisions") if
the body were to move with velocity $W(\cdot )$. Then $W$ generates a new
possible motion $V_{W}$ by the equation 
\begin{equation*}
\frac{dV_{W}}{dt}=\frac{F_{0}(V_{\infty })-F_{0}(W(t))}{V_{\infty }-W(t)}%
\left( V_{\infty }-V_{W}\right) -R_{W}\left( t\right) 
\end{equation*}%
The goal is to prove that the mapping $W\rightarrow V_{W}$ has a fixed
point. The main upper and lower bounds of $R_{W}(t)$ are stated in Theorem %
\ref{Thm:IREstimate} for the irreversal case and Theorem \ref{Thm:REstimate}
for the reversal case. Assuming them, Theorems \ref{ThIRExistence} and \ref%
{ThRExistence} follow easily. The proofs of Theorems \ref{Thm:IREstimate}
and \ref{Thm:REstimate} are the core of this paper. The key bound (\ref%
{IR:RestimateL}) in the irreversal case proves that $R_{W}(t)$ remains
negative. On the other hand, $R_{W}(t)$ stays positive in the reversal case,
according to (\ref{R:RestimateL}). We begin the proofs by considering the
class $\mathcal{W}$ of possible motions and then prove the required bounds
for the particles colliding with the body from the left side, followed by
those that collide from the right side. In the reversal case, because $E\geq
0$, the collisions on the right begin to dominate but, after the velocity
reversal, eventually those on the right and the left balance each other and
the body tends to its equilibrium speed from below.

\subsection{Acknowledgement}

We thank Carlo Marchioro for his helpful comments on the related paper \cite%
{Ital2} with purely elastic collisions. This research was supported in part
by NSF grant DMS-1007960.

%%%%%%%%%%%%%%%%%%%%%%%%%%%%%%%%%

\section{Assumptions and Examples of Diffusion Kernels\label%
{Sec:AssumptionsOnKernels}}

We make the following assumptions on the diffusion kernel $K$, which governs
the collisions with the body.

\begin{itemize}
\item[A1.] [Structure] Let $K$ and $f_{0}$ have the product form 
\begin{eqnarray*}
f_{0}(\mathbf{v}) &=&a_{0}(v_{x})b(v_{\bot }), \\
K(\mathbf{v,u}) &=&k(v_{x},u_{x})b(v_{\bot }),\qquad \int b(v_{\bot
})dv_{\bot }=1\text{ and }b(0) > 0\text{,}
\end{eqnarray*}%
with each factor nonnegative and continuous, with $f_{0}$ bounded and with $%
a_{0}$ and $k$ even functions in both $u_{x}$ and $v_{x}$.

Notice that, under this assumption, $\ell (\mathbf{w})$ actually depends
only on $w_{x}$; that is, 
\begin{equation*}
\ell (\mathbf{w}) = w_x^2 + \int_{v_{x}\geq 0}dv_{x}\ v_{x}^{2}\
k(v_{x},w_{x})=\ell (w_{x}).
\end{equation*}%
Therefore, at any later time $f_{+}$ and $f_{-}$ must take the product form 
\begin{equation*}
f_{+}(t,\mathbf{x};\mathbf{v})=a_{+}(t,\mathbf{x;}v_{x})b(v_{\bot }),\quad
f_{-}(t,\mathbf{x};\mathbf{v})=a_{-}(t,\mathbf{x;}v_{x})b(v_{\bot }).
\end{equation*}%
We remark that, although collisions occur only in the horizontal direction,
the analysis is not entirely one-dimensional; the dimension does come into
play as we shall see in the proofs of \eqref{IrrEst} and \eqref{RevEst}.
\bigskip %Lemmas \ref{Lemma:Upper and lower
%bound of R+} and \ref{Lemma:UpperBoundOfR-}. \bigskip

\item[A2.] [Boundedness] \noindent 
\begin{equation*}
\sup_{\left\vert u_{x}\right\vert \leqslant \gamma } \ \sup_{v_{x}\in 
\mathbb{R}} \ k(v_{x},u_{x})<\infty .
\end{equation*}

\item[A3.] [Power Law] There is a power $0<p\leq 2$ and there are positive
constants $C$ and $c$ such that 
\begin{equation*}
c\left\vert u_{x}\right\vert ^{p}\leqslant \int_{v_{x}\geq 0}\ v_{x}^{2}\
k(v_{x},u_{x})\ dv_{x}\ \leqslant C\left\vert u_{x}\right\vert ^{p}
\end{equation*}%
for $u_{x}\in \left[ -\gamma ,\gamma \right] $. We also assume that this
integral is an even $C^{1}$ function of $u_{x}$ for $u_{x}\neq 0$ and is
strictly decreasing for $u_{x}<0$. Combining A3 and A1, we have 
\begin{equation*}
c|u_{x}|^{p}\leqslant \ell (u_{x})\leqslant u_{x}^{2}+C|u_{x}|^{p}\leqslant
C^{\prime }|u_{x}|^{p}\text{ for }u_{x}\in \left[ -\gamma ,\gamma \right] .
\end{equation*}

\item[A4.] [Integrability] 
\begin{equation*}
k(v_{x},z-y-V_{\infty })\ a_{0}(z)\leq M(z)\quad \text{ for }|v_{x}|<2\gamma
,\ |y|<\gamma ,\ |z|<\infty ,
\end{equation*}%
where $M\in L^{1}(\mathbb{R})$.
\end{itemize}

%%%%%%%%%%%%%%%%%%%%%%

\subsection{Examples of Collision Kernels}

In this section, we give a few examples of collision kernels and initial
densities that satisfy the assumptions.

\begin{example}
Let 
\begin{equation*}
a_{0}(u_{x})=C_{1}e^{-\alpha u_{x}^{2}},\quad k(v_{x},u_{x})=C_{2}e^{-\beta
v_{x}^{2}}\left\vert u_{x}\right\vert .
\end{equation*}%
The requirement that mass is conserved means that 
\begin{equation*}
\int_{v_{x}\geqslant 0}v_{x}K(\mathbf{v,u})d\mathbf{v}=\left\vert
u_{x}\right\vert ,
\end{equation*}%
which reduces to choosing $C_{2}=2\beta $. Assumptions A1-A4 are seen to be
easily satisfied with $p=1$. The Reversal Criterion takes the form 
\begin{equation}
2\beta \int_{0}^{\infty }ze^{-\alpha (z+V_{\infty })^{2}}dz=2\beta
\int_{V_{\infty }}^{\infty }(z-V_{\infty })e^{-\alpha z^{2}}dz<e^{-\alpha
V_{\infty }^{2}},  \label{RCexample1}
\end{equation}%
while the Irreversal Criterion is the opposite (strict) inequality.

Let us first suppose that $V_{\infty }$ either vanishes or is very small.
Then the Reversal Criterion is satisfied provided $\frac{\beta }{\alpha }%
=2\beta \int_{0}^{\infty }ze^{-\alpha z^{2}}dz<1$, or $\beta <\alpha $,
while the Irreversal Criterion is satisfied if $\beta >\alpha $. Now $\alpha 
$ and $\beta $ may be interpreted as the reciprocals of (normalized)
temperatures. So the velocity reverses if the body is hotter than the gas
and the speed $V_{\infty }$ is sufficiently small. The velocity does not
reverse if the body has a lower temperature than the gas and the speed $%
V_{\infty }$ is sufficiently small. The latter situation could happen for a
comet or a space vehicle that actively cools itself during its reentry into
the atmosphere.

Next let us consider a fast moving body. We can write \eqref{RCexample1} as 
\begin{equation*}
\frac{1}{2\alpha }e^{-\alpha V_{\infty }^{2}}-\frac{V_{\infty }\sqrt{\pi }}{2%
\sqrt{\alpha }}{erfc}(\sqrt{\alpha }V_{\infty })<\frac{1}{2\beta }e^{-\alpha
V_{\infty }^{2}}.
\end{equation*}%
We can use the asymptotic expansion of the complementary error function 
\begin{equation*}
{erfc}\left( x\right) \sim \frac{e^{-x^{2}}}{x\sqrt{\pi }}\sum_{n=0}^{\infty
}(-1)^{n}\frac{\left( 2n-1\right) !!}{\left( 2x^{2}\right) ^{n}}.
\end{equation*}%
In fact, two easy integrations by parts yield 
\begin{equation*}
{erfc}(x)=\frac{e^{-x^{2}}}{x\sqrt{\pi }}\left( 1-\frac{1}{2x^{2}}\right) +%
\frac{3}{2\sqrt{\pi }}\int_{x}^{\infty }e^{-t^{2}}\frac{dt}{t^{4}}\ >\ \frac{%
e^{-x^{2}}}{x\sqrt{\pi }}\left( 1-\frac{1}{2x^{2}}\right)
\end{equation*}%
for any $x>0$. Thus the Reversal Criterion is satisfied if 
\begin{equation*}
\frac{1}{4\alpha ^{2}V_{\infty }^{2}}e^{-\alpha V_{\infty }^{2}}=\frac{1}{%
2\alpha }e^{-\alpha V_{\infty }^{2}}-\frac{1}{2\alpha }\left( 1-\frac{1}{%
2\alpha V_{\infty }^{2}}\right) e^{-\alpha V_{\infty }^{2}}<\frac{1}{2\beta }%
e^{-\alpha V_{\infty }^{2}}.
\end{equation*}%
That is, the velocity reverses if $V_{\infty }^{2}>\frac{\beta }{2\alpha ^{2}%
}$. In particular, if $\alpha =\beta $, the velocity reverses if $V_{\infty
}^{2}>\frac{1}{2\alpha }$. This agrees with the numerical calculations of
Case 9 in \cite{ATC}. The case of equal temperatures ($\alpha =\beta $) is
motivated by Boltzmann theory \cite{Sone, Spohn}.

Now consider a fast moving body and look for irreversal. We further expand 
\begin{equation*}
{erfc}(x)=\frac{e^{-x^{2}}}{x\sqrt{\pi }}\left( 1-\frac{1}{2x^{2}}+\frac{3}{%
4x^{4}}\right) -\frac{15}{4\sqrt{\pi }}\int_{x}^{\infty }e^{-t^{2}}\frac{dt}{%
t^{6}}.
\end{equation*}%
Dropping the last integral, we deduce that there is no reversal if $\frac{1}{%
4\alpha ^{2}V_{\infty }^{2}}-\frac{3}{8\alpha ^{3}V_{\infty }^{4}}>\frac{1}{%
2\beta }$, which can happen if $\beta >12\alpha $. This again agrees with
the numerical data in \cite{ATC}, namely that there is reversal if $\alpha
=\beta $.
\end{example}

%%%%%%%%%%%%  EXAMPLE 2  %%%%%%%%%%%%%%%%%%

\begin{example}
\label{example:p=3/2} We now choose 
\begin{equation*}
K(\mathbf{v,u})=2e^{-\frac{v_{x}^{2}}{\left\vert u_{x}\right\vert }%
}b(v_{\bot }),\text{ }f_{0}(\mathbf{v})=a_{0}(v_{x})b(v_{\bot }),
\end{equation*}%
as in \cite[Example 2]{CS-1}. Mass conservation during collisions forces the
coefficient to be $2$. We assume $a_{0}\in L^{1}(\mathbb{R})$, and $\int
bdv_{\bot }=1.$ The physical interpretation is that an incoming particle
with almost the same velocity as that of the body is likely to be reflected
with almost the same velocity, while an incoming particle with velocity
quite different from that of the body is reflected according to a wide
Gaussian distribution around $V(t).$ As in \cite[Example 2]{CS-1}, this
collision kernel satisfies A1-A4 with $p=\frac{3}{2}$. The Reversal
Criterion then means that 
\begin{equation}
\int_{V_{\infty }}^{\infty }a_{0}(u)du<\frac{a_{0}(V_{\infty })}{2},
\label{verifyA5forEx2-1}
\end{equation}%
while the Irreversal Criterion means the opposite (strict) inequality. A
simple instance of reversal is the algebraic decay: $a_{0}(u)=\frac{1}{u^{m}}
$ for $u\geqslant 1$, in which case the Reversal Criterion is satisfied so
long as $1\leqslant V_{\infty }<\frac{m-1}{2}$ with $m>4$. A second instance
is the Gaussian $a_{0}(u)=C_{1}e^{-\beta u^{2}}$, for which reversal means 
\begin{equation*}
\frac{2}{\sqrt{\beta }}\int_{\sqrt{\beta }V_{\infty }}^{\infty
}e^{-z^{2}}dz<e^{-\beta V_{\infty }^{2}}.
\end{equation*}%
Using one term in the expansion of erfc with a negative remainder, we see
that reversal occurs if $V_{\infty }<\frac{1}{\beta }$. Similarly, using two
terms in the expansion with a positive remainder, we see that irreversal
occurs if $\frac{1}{\beta V_{\infty }}-\frac{1}{2\beta ^{2}V_{\infty }^{3}}%
>1 $.
\end{example}

%in which case A5 is satisfied for any $V_{\infty }$ if $T_{G}$ is large enough.
%Indeed, suppose $T_{G}>>V_{\infty }^{2}$. Then, remembering that $\gamma $
%is very small and taking $y=u/\sqrt{T_{G}}$, we see that %
%\eqref{verifyA5forEx2-1} is satisfied if 
%\begin{equation*}  \sqrt{T_{G}}\int_{V_{\infty }/\sqrt{T_{G}}}^{\infty }e^{-y^{2}}dy>\frac{1}{2}%
%e^{-V_{\infty }^{2}/T_{G}}.  \end{equation*}%
%Since $V_{\infty }/\sqrt{T_{G}}<<1$, it suffices that $\sqrt{T_{G}}\frac{%
%\sqrt{\pi }}{2}>\frac{1}{2}$, which is true since $T_{G}$ is large.
%n the other hand, if $\sqrt{T_{G}}<<V_{\infty }<<T_{G}$, we write 
%\begin{equation*}  2\sqrt{T_{G}}\int_{V_{\infty }/\sqrt{T_{G}}}^{\infty }e^{-y^{2}}dy=\sqrt{\pi%
%T_{G}}\text{erfc}\left( \frac{V_{\infty }}{\sqrt{T_{G}}}\right) ,  \end{equation*} 
%here {erfc} is the complementary error function. Using the approximation
%erfc $\sim \frac{e^{-x^{2}}}{x\sqrt{\pi }}$ for large $x$, we have
%\begin{equation*}  \frac{2\sqrt{T_{G}}\int_{V_{\infty }/\sqrt{T_{G}}}^{\infty }e^{-y^{2}}dy}{%
%e^{-V_{\infty }^{2}/T_{G}}} \sim \frac{\left[ V_{\infty }/\sqrt{T_{G}}\right]
%^{-1}e^{-V_{\infty }^{2}/T_{G}}}{e^{-V_{\infty }^{2}/T_{G}}} = \frac{T_{G}}{ V_{\infty }}>1.
%\end{equation*}

%%%%%%%%%%%%%%  Example 3  %%%%%%%%%%%%%%

\begin{example}
\label{example:range of p} As in \cite[Example 3]{CS-1}, we can find a
family of kernels that covers a continuous range of $p.$ Given $\beta \in %
\left[ -1,3\right) $, we choose 
\begin{equation*}
K(\mathbf{v,u})=C_{2}\left\vert u_{x}\right\vert ^{\beta }e^{-{v_{x}^{2}}{%
\left\vert u_{x}\right\vert ^{\beta -1}}}b(v_{\bot }),\quad f_{0}(\mathbf{v}%
)=a_{0}(v_{x})b(v_{\bot }).
\end{equation*}%
Once again, $C_{2}$ is chosen so that mass is conserved during collisions,
while $a_{0}$ and $b$ are chosen as in Example \ref{example:p=3/2}. We then
have%
\begin{equation*}
C_{2}\left\vert u_{x}\right\vert ^{\beta }\int_{0}^{\infty }v_{x}^{2}\ e^{-{%
v_{x}^{2}}{\left\vert u_{x}\right\vert ^{\beta -1}}}dv_{x}=C\left\vert
u_{x}\right\vert ^{\frac{3-\beta }{2}}
\end{equation*}%
for some constant $C$. Thus $p$ runs through $\left( 0,{2}\right] $ as $%
\beta $ runs through $\left[ -1,3\right) .$
\end{example}

\section{Proof of the Irreversal Case\label{Sec:IR}}

\subsection{Proof assuming the Key Estimate}

Theorem \ref{ThIRExistence} will be proven by a fixed point technique. We
will first define a family $\mathcal{W}$ of possible body motions $W$. Given
a possible motion $W\in \mathcal{W},$ let $F_{0}(W)$ be the force if each
particle were to collide only once and the body were to move with velocity $%
W(t)$. It is given by putting $f_{0}$ in place of $f_{-}$ in (\ref{Force}),
that is, 
\begin{eqnarray}
F_{0}(W) &=&\int_{\partial \Omega _{R}(t)}dS_{\mathbf{x}}\int_{u_{x}%
\leqslant W(t)}d\mathbf{u}\ \ell (\mathbf{u}-\mathbf{i}W(t))f_{0}(\mathbf{u})
\label{eqn:F_0} \\
&&-\int_{\partial \Omega _{L}(t)}dS_{\mathbf{x}}\int_{u_{x}\geqslant W(t)}d%
\mathbf{u}\ \ell (\mathbf{u}-\mathbf{i}W(t))f_{0}(\mathbf{u})  \notag \\
&=&C\left( \int_{u_{x}\leqslant W(t)}\ell (\mathbf{u}-\mathbf{i}W)f_{0}(%
\mathbf{u})d\mathbf{u}-\int_{u_{x}\geqslant W(t)}\ell (\mathbf{u}-\mathbf{i}%
W)f_{0}(\mathbf{u})d\mathbf{u}\right) ,  \notag
\end{eqnarray}%
where $C=\left\vert \partial \Omega _{L}\right\vert $. By \cite[Lemma 2.8]%
{CS-1}, $F_{0}(W)$ is a positive, increasing $C^{1}$ function of $W.$ Let $%
R_{W}(t)=F(t)-F_{0}(W(t))$ be the force due to the collisions with the
particles that occurred before time $t$ (\textquotedblleft precollisions")
if the body were to move with velocity $W(t)$, given by the formula%
\begin{eqnarray*}
R_{W}(t) &=&\int_{\partial \Omega _{L}(t)}dS_{\mathbf{x}}\int_{u_{x}%
\geqslant W(t)}d\mathbf{u}\ \ell (\mathbf{u}-\mathbf{i}W(t))\left\{ f_{0}(%
\mathbf{u})-f_{-}(t,\mathbf{x},\mathbf{u})\right\} \\
&&+\int_{\partial \Omega _{R}(t)}dS_{\mathbf{x}}\int_{u_{x}\leqslant W(t)}d%
\mathbf{u}\ \ell (\mathbf{u}-\mathbf{i}W(t))\left\{ f_{-}(t,\mathbf{x},%
\mathbf{u})-f_{0}(\mathbf{u})\right\} .
\end{eqnarray*}%
As mentioned in the introduction, we then define a new motion $V_{W}$ by
means of the equation 
\begin{equation}
\frac{dV_{W}}{dt}=\frac{F_{0}(V_{\infty })-F_{0}(W(t))}{V_{\infty }-W(t)}%
\left( V_{\infty }-V_{W}\right) -R_{W}\left( t\right) .
\label{iteration equation}
\end{equation}%
The main part of the proof is to establish, as stated in Theorem \ref%
{Thm:IREstimate} below, an upper and a lower bound of $R_{W}(t)$ for all $%
W\in \mathcal{W}.$ Using Theorem \ref{Thm:IREstimate}, we will prove by
means of Lemma \ref{IR:DeducingConditionsOnhAndg} that $V_{W}\in \mathcal{W}$%
.

\begin{definition}[Class of possible motions for the irreversal case]
\label{def:W-IR}We define $\mathcal{W}$ as the family of functions $W$ that
satisfy the following conditions.

(i) $W:[0,\infty )\rightarrow \mathbb{R}$ is Lipschitz and $W(0)=V_{\infty
}+\gamma $.

(ii) $W$ is decreasing over the interval $[0,t_{0}]$ for $t_{0}=\left\vert
\ln \gamma \right\vert $.

(iii) For all $W\in \mathcal{W}$, $t\in \lbrack 0,\infty )$ and $\gamma \in
(0,1)$, 
\begin{equation}
\gamma h(t,\gamma )\leqslant V_{\infty }-W(t)\leqslant \gamma g(t,\gamma ),
\label{g and h}
\end{equation}%
that is, 
\begin{equation*}
V_{\infty }-\gamma g(t,\gamma )\leqslant W(t)\leqslant V_{\infty }-\gamma
h(t,\gamma ),
\end{equation*}%
where%
\begin{eqnarray*}
-g(t,\gamma ) &=&e^{-B_{0}t}+\frac{\gamma ^{p}A_{+}}{t^{p+d}}\chi \left\{
t\geqslant t_{0}+1\right\} , \\
-h(t,\gamma ) &=&e^{-B_{\infty }t}+\frac{\gamma ^{p}A_{-}}{\left\langle
t\right\rangle ^{p+d}}.
\end{eqnarray*}%
with%
\begin{equation*}
B_{0}=\max_{V\in \left[ V_{\infty }-\gamma ,V_{\infty }+\gamma \right]
}F_{0}^{\prime }(V),B_{\infty }=\min_{V\in \left[ V_{\infty }-\gamma
,V_{\infty }+\gamma \right] }F_{0}^{\prime }(V).
\end{equation*}
\end{definition}

\begin{theorem}
\label{Thm:IREstimate}If $k$ and $a_{0}$ satisfy the Irreversal Criterion,
then for all small enough $\gamma $, there exists $c_{1}$, $C_{2},$ and $C>0$
such that for all $W\in \mathcal{W}$, we have%
\begin{equation}
R_{W}(t)\leqslant \left[ -c_{1}\frac{\gamma ^{p+1}}{t^{p+d}}+\frac{%
C_{2}\gamma ^{p+1}A_{-}^{p+1}}{\left\langle t\right\rangle ^{\left(
p+d\right) \left( p+1\right) }}\right] \chi \left\{ t\geqslant t_{0}\right\}
\leqslant 0  \label{IR:RestimateL}
\end{equation}
and 
\begin{equation}
R_{W}(t)\geqslant -\frac{C\left( \gamma +\gamma ^{p+1}A_{-}\right) }{%
\left\langle t\right\rangle ^{p+d}}^{p+1} .  \label{IR:RestimateU}
\end{equation}
\end{theorem}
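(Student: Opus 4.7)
The plan is to compute $R_W(t)$ directly by representing $f_-$ at the two ends of the cylinder via backward free-transport characteristics, together with the reflection law applied at any prior collision time. Since $W(t)\ge V_\infty\ge 0$ throughout and $W$ is decreasing on $[0,t_0]$, the dominant mechanism in the irreversal regime is as follows: a particle is reflected off the left face at an earlier time $s$ with outgoing velocity $v_x$ just below $W(s)$; because $W$ decreases, one later has $v_x>W(t)$, and the particle catches up to the body and recollides at the left face at time $t$. For each $(\mathbf{x},\mathbf{u})\in\partial\Omega_L(t)\times\{u_x>W(t)\}$, if the backward straight-line trajectory never meets the body then $f_-(t,\mathbf{x},\mathbf{u})=f_0(\mathbf{u})$ and the contribution to $R_W$ vanishes; otherwise the reflection law expresses $f_-(t,\mathbf{x},\mathbf{u})$ as an integral of $k(u_x-W(s),u_x^*-W(s))$ against $f_-(s,\mathbf{x}^*,\mathbf{u}^*)$. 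Iterating yields a Neumann-type series ordered by the number of prior collisions.

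I would isolate the one-precollision contribution by replacing $f_-$ at time $s$ by $f_0$ and estimate the remainder separately. Using the product structure A1 and the change of variables $z=u_x^*-V_\infty$, together with $|W(s)-V_\infty|\le\gamma g(s,\gamma)$ to Taylor-expand $a_0(z+W(s))$ around $a_0(z+V_\infty)$, the principal integrand, evaluated at $u_x=W(t)=V_\infty$, reduces to
\[
a_0(V_\infty)\;-\;\int_0^\infty k(0,z)\,a_0(z+V_\infty)\,dz,
\]
which by the Irreversal Criterion \eqref{IRREVcrit} is strictly negative with a quantitative gap. The gap persists uniformly for $u_x$ and $W$ in a $\gamma$-neighborhood of $V_\infty$; multiplying by $\ell(u_x-W(t))\sim|u_x-W(t)|^p$ from A3 and by the measure of the recollision-velocity window then gives the leading term $-c_1\gamma^{p+1}/t^{p+d}$ in \eqref{IR:RestimateL}. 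The sign is the correct one because the extra recollision particles enhance $f_-$ above $f_0$ on the left face, making $\int(f_0-f_-)\ell$ negative.

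The decay rate $t^{-p-d}$ arises from two quantitative geometric constraints that I would make precise. First, for the backward trajectory to meet the left face at time $s$, the relation $u_x(t-s)=X_L(t)-X_L(s)+O(1)$ forces $u_x-\overline{W}_{[s,t]}$ into a window of width $O((t-s)^{-1})$; combined with the $\ell\sim|u_x-W(t)|^p$ factor this yields roughly $p+1$ powers of $(t-s)^{-1}$ after integration against $du_x$. Second, the $d-1$ transverse position components of the trajectory must land in the cylindrical cross-section at time $s$, contributing a further $(t-s)^{-(d-1)}$. These combine to give the announced $t^{-d-p}$ rate, and the overall scaling $\gamma^{p+1}$ comes from an additional factor $\gamma$ picked up from the measure of the set of collision times for which $W(s)-W(t)$ lies in the relevant window.

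The main technical hurdle I anticipate is controlling the Neumann tail and the error made by replacing $f_-$ by $f_0$ in the leading term. Each additional iteration brings in one more factor of $k$ evaluated inside a recollision window of size $O(\gamma)$; together with Assumption A4 (supplying a uniform $L^1$ envelope $M$), this gains a factor of $\gamma^p$ per iteration and closes a geometric series. These corrections are exactly the positive remainder $+C_2\gamma^{p+1}A_-^{p+1}/\langle t\rangle^{(p+d)(p+1)}$ in \eqref{IR:RestimateL}, and the constraint $t\ge t_0=|\ln\gamma|$ is what forces them to be dominated by the leading $-c_1\gamma^{p+1}t^{-p-d}$ term for small $\gamma$. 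For the lower bound \eqref{IR:RestimateU}, the Irreversal Criterion plays no role: I would simply estimate the full representation of $R_W$ in absolute value using A3 for the $\ell$-factor and A4 to sum the iterated reflections, arriving at the stated estimate.
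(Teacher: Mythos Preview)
Your analysis of the left-side contribution $r_W^L$ is on the right track, and your identification of the mechanism --- the Irreversal Criterion forces $f_-$ to exceed $f_0$ in the recollision window, making $\int(f_0-f_-)\ell$ negative --- matches the paper. However, the paper does \emph{not} iterate a Neumann series. It writes $f_-(t,\mathbf{x},\mathbf{u})=f_+(\tau,\xi,\mathbf{u})\chi_1+f_0(\mathbf{u})\chi_0$ once, and then closes a simple bootstrap for the quantity $a_+^*=\sup a_+$: substituting back into the boundary condition gives $a_+^*\le C\gamma a_+^*+C$, hence $a_+^*\le C$ uniformly. This one-step argument replaces your entire Neumann tail analysis and is considerably cleaner.

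There is a genuine gap in your plan: you misidentify the origin of the positive remainder $C_2\gamma^{p+1}A_-^{p+1}/\langle t\rangle^{(p+d)(p+1)}$ in \eqref{IR:RestimateL}. It does \emph{not} come from higher-order precollision iterations on the left face; it is the full right-face contribution $r_W^R(t)$. For $t\le t_0$ one has $r_W^R=0$ since $W$ is monotone decreasing and no recollision on the right is possible. For $t>t_0$, however, $W$ may fail to be monotone (because of the polynomial tail in the definition of $\mathcal{W}$), and recollisions on the right occur for $u_x$ in the window $[\inf_{s<t}\langle W\rangle_{s,t},\,W(t)]$, whose width is at most $\gamma e^{-B_\infty t}+\gamma^{p+1}A_-/\langle t\rangle^{p+d}$. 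Integrating $\ell\sim|u_x-W(t)|^p$ over this window and over the transverse cone yields exactly the $(p+d)(p+1)$ decay exponent and the $A_-^{p+1}$ dependence. Without this separate right-side estimate you cannot justify the cutoff $\chi\{t\ge t_0\}$ in \eqref{IR:RestimateL} (since $r_W^R$ literally vanishes for $t<t_0$), nor the stated form of the remainder. Your proposal needs to split $R_W=r_W^L+r_W^R$ from the outset and treat the two faces by different arguments.
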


\begin{proof}
We postpone the proof of Theorem \ref{Thm:IREstimate} to the end of Section
3.
\end{proof}

%%%%%%%%%%%%%% Lemma 3.1 %%%%%%%%%%%%%%%%%%%%

\begin{lemma}
\label{IR:DeducingConditionsOnhAndg}If $k$ and $a_{0}$ satisfy the
Irreversal Criterion, for small enough $\gamma $, we can choose $A_{+}$ and $%
A_{-}$ in Definition \ref{def:W-IR} such that, for any $W\in \mathcal{W}$,
the solution $V_{W}$ to the iteration equation (\ref{iteration equation}) 
\begin{equation*}
\frac{dV_{W}}{dt}=Q(t)\left( V_{\infty }-V_{W}\right) -R_{W}\left( t\right)
,\quad Q(t)=\frac{F_{0}(V_{\infty })-F_{0}(W(t))}{V_{\infty }-W(t)},
\end{equation*}%
satisfies 
\begin{equation*}
-\gamma e^{-B_{\infty }t}-\frac{A_{-}\gamma ^{p+1}}{\left( 1+t\right) ^{p+d}}%
\leqslant V_{\infty }-V_{W}(t)\leqslant -\gamma e^{-B_{0}t}-\frac{%
A_{+}\gamma ^{p+1}}{t^{p+d}}\chi \left\{ t\geqslant t_{0}+1\right\} <0.
\end{equation*}%
In other words, for every $W\in \mathcal{W},$ we have $V_{W}\in \mathcal{W}.$
\end{lemma}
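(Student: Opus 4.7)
The plan is to treat (\ref{iteration equation}) as a scalar linear first-order ODE in $V_W$, solve it explicitly by an integrating factor, and then read off the three properties required by Definition \ref{def:W-IR} from the resulting formula. Writing $u(t)=V_W(t)-V_\infty$, the equation becomes $u'(t)=-Q(t)u(t)-R_W(t)$ with $u(0)=\gamma$, whose solution is
\begin{equation*}
u(t)=\gamma\,e^{-\int_0^t Q(s)\,ds}+\int_0^t e^{-\int_s^t Q(\tau)\,d\tau}\bigl(-R_W(s)\bigr)\,ds.
\end{equation*}
From this formula, condition (i) of $\mathcal{W}$ (Lipschitz, $C^1$, correct initial value) is immediate.

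The first substantive step is to pin down $Q(t)$. Because $W\in\mathcal{W}$ forces $W(t)\in[V_\infty,V_\infty+\gamma]$, the mean-value theorem gives $Q(t)=F_0'(\xi(t))$ for some $\xi(t)$ in the same interval, hence $B_\infty\leq Q(t)\leq B_0$, so $e^{-B_0(t-s)}\leq e^{-\int_s^t Q}\leq e^{-B_\infty(t-s)}$. For the upper bound on $u$, I feed the lower bound $-R_W(s)\leq C(\gamma+\gamma^{p+1}A_-)^{p+1}/\langle s\rangle^{p+d}$ from Theorem \ref{Thm:IREstimate} into the convolution together with $e^{-B_\infty(t-s)}$ and apply the elementary estimate $\int_0^t e^{-B_\infty(t-s)}\langle s\rangle^{-(p+d)}\,ds\leq C'/(1+t)^{p+d}$. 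For small $\gamma$, $(\gamma+\gamma^{p+1}A_-)^{p+1}\leq 2^{p+1}\gamma^{p+1}$, so choosing $A_-$ large enough to dominate the convolution constant yields the required $u(t)\leq\gamma e^{-B_\infty t}+A_-\gamma^{p+1}/(1+t)^{p+d}$. For the lower bound, Theorem \ref{Thm:IREstimate} states $R_W(s)\leq 0$ pointwise, so the integral in the variation-of-constants formula is nonnegative and already delivers the exponential piece $u(t)\geq\gamma e^{-B_0 t}$. For the polynomial refinement on $t\geq t_0+1$, I use the full upper bound $-R_W(s)\geq c_1\gamma^{p+1}/s^{p+d}-C_2\gamma^{p+1}A_-^{p+1}/\langle s\rangle^{(p+d)(p+1)}$ for $s\geq t_0$, combine it with $e^{-\int_s^t Q}\geq e^{-B_0(t-s)}$, and invoke the lower convolution bound $\int_{t_0}^t e^{-B_0(t-s)}s^{-(p+d)}\,ds\geq c'/t^{p+d}$, valid for $t\geq t_0+1$. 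The net polynomial lower contribution is of the form $(c_1 c'-C_2 A_-^{p+1}c'')\gamma^{p+1}/t^{p+d}$, and choosing $A_+$ strictly smaller than this coefficient closes the lower bound.

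The constants must therefore be fixed in this order: first pick $A_-$ large (from the upper-bound step), then pick $A_+$ small enough to fit beneath $c_1 c'-C_2 A_-^{p+1}c''$, shrinking $\gamma$ further if necessary. Condition (ii), monotonicity of $V_W$ on $[0,t_0]$, follows from the sign decomposition $V_W'(t)=-Q(t)u(t)-R_W(t)$: by Theorem \ref{Thm:IREstimate} the forcing $-R_W$ is of order $\gamma^{p+1}$, whereas the drift $-Q u$ is of order $\gamma$ (at least near $t=0$); for $\gamma$ sufficiently small the drift dominates and $V_W$ is strictly decreasing. I expect the main obstacle to be exactly this interlocking choice of $A_-,A_+,\gamma$ — the upper-bound constant, the $A_-^{p+1}$ correction inside the lower bound, and the monotonicity on $[0,t_0]$ must all be satisfied simultaneously, and the last is most delicate near $t=t_0=|\ln\gamma|$, where $e^{-B_0 t_0}=\gamma^{B_0}$ can be uncomfortably close to $\gamma^p$ and the comparison of $B_0$ with $p$ has to be handled carefully by the smallness of $\gamma$.
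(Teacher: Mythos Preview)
Your treatment of the two-sided bound (condition (iii) of Definition~\ref{def:W-IR}) is essentially the paper's: solve the linear ODE by variation of constants, trap $Q(t)$ between $B_\infty$ and $B_0$, and feed the bounds on $R_W$ from Theorem~\ref{Thm:IREstimate} into the resulting exponential convolution. The only organizational difference is that the paper first absorbs the correction $C_2\gamma^{p+1}A_-^{p+1}/\langle t\rangle^{(p+d)(p+1)}$ into the leading $c_1\gamma^{p+1}/t^{p+d}$ term \emph{before} integrating (using that the correction decays strictly faster and that $t\geq t_0=|\ln\gamma|$ is large), obtaining simply $R_W(t)\leq -c\gamma^{p+1}t^{-(p+d)}\chi\{t\geq t_0\}$; you instead convolve both pieces and end with the combined coefficient $c_1c'-C_2A_-^{p+1}c''$. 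The paper's route sidesteps the worry that this coefficient might be negative after $A_-$ has been chosen large: the correction is made small by shrinking $\gamma$ (hence enlarging $t_0$) rather than by balancing constants, which is exactly what your ``shrinking $\gamma$ further if necessary'' accomplishes once you observe that $c''=c''(t_0)\to 0$.

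On condition (ii), you go further than the paper. The paper's proof of this lemma checks only the two-sided inequality; it never verifies that $V_W$ is decreasing on $[0,t_0]$, even though $V_W\in\mathcal{W}$ requires it. Your concern is legitimate: from $V_W'=-Q\,u-R_W$ one needs $B_\infty\gamma e^{-B_0 t}$ to dominate $|R_W(t)|\lesssim \gamma^{p+1}/\langle t\rangle^{p+d}$ on all of $[0,t_0]$, and at $t=t_0=|\ln\gamma|$ this reads $\gamma^{1+B_0}|\ln\gamma|^{p+d}\gtrsim\gamma^{p+1}$, which fails for small $\gamma$ whenever $F_0'(V_\infty)>p$. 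This is an omission in the paper that you have correctly put your finger on, not a defect of your argument; resolving it would require either a different choice of $t_0$ (as the paper in fact makes in the reversal case, Definition~\ref{def:W-R}) or a weakening of condition (ii).
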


\begin{proof}
By \eqref{iteration equation} we have%
\begin{equation*}
\frac{d\left( V_{\infty }-V_{W}\right) }{dt}=-Q(t)\left( V_{\infty
}-V_{W}\right) +R_{W}\left( t\right) ,
\end{equation*}%
hence%
\begin{equation*}
V_{\infty }-V_{W}\left( t\right) =-\gamma
e^{-\int_{0}^{t}Q(r)dr}+\int_{0}^{t}\left( e^{-\int_{s}^{t}Q(r)dr}\right)
R_{W}(s)ds
\end{equation*}%
because $V_{\infty }-V_{W}\left( 0\right) =-\gamma $. On the one hand, by (%
\ref{IR:RestimateU}), 
\begin{eqnarray*}
V_{\infty }-V_{W}\left( t\right) &\geqslant &-\gamma e^{-B_{\infty
}t}-\int_{0}^{t}\left( e^{-\int_{s}^{t}Q(r)dr}\right) \frac{C\left( \gamma
+\gamma ^{p+1}A_{-}\right) ^{p+1}}{\left\langle s\right\rangle ^{p+d}}ds \\
&\geqslant &-\gamma e^{-B_{\infty }t}-\int_{0}^{t}e^{-B_{\infty }(t-s)}\frac{%
C\left( \gamma +\gamma ^{p+1}A_{-}\right) ^{p+1}}{\left\langle
s\right\rangle ^{p+d}}ds \\
&=&-\gamma e^{-B_{\infty }t}-C\left( \gamma +\gamma ^{p+1}A_{-}\right)
^{p+1}\int_{0}^{t}\frac{e^{-B_{\infty }(t-s)}}{\left\langle s\right\rangle
^{p+d}}ds
\end{eqnarray*}%
where%
\begin{eqnarray*}
&&\int_{0}^{t}\frac{e^{-B_{\infty }(t-s)}}{\left\langle s\right\rangle ^{p+d}%
}ds \\
&=&\int_{0}^{\frac{t}{2}}e^{-B_{\infty }(t-s)}\frac{1}{\left( 1+s\right)
^{p+d}}ds+\int_{\frac{t}{2}}^{t}e^{-B_{\infty }(t-s)}\frac{1}{\left(
1+s\right) ^{p+d}}ds \\
&\leqslant &\int_{0}^{\frac{t}{2}}e^{-B_{\infty }(t-s)}ds+\frac{C}{\left(
1+t\right) ^{p+d}}\int_{\frac{t}{2}}^{t}e^{-B_{\infty }(t-s)}ds \\
&\leqslant &\frac{1}{B_{\infty }}(e^{-\frac{B_{\infty }t}{2}}-e^{-B_{\infty
}t})+\frac{C}{\left( 1+t\right) ^{p+d}}\ \ \leqslant \ \frac{C^{\prime }}{%
\left( 1+t\right) ^{p+d}},
\end{eqnarray*}%
That is, 
\begin{equation*}
V_{\infty }-V_{W}\left( t\right) \geqslant -\gamma e^{-B_{\infty }t}-\frac{%
C^{\prime }\left( \gamma +\gamma ^{p+1}A_{-}\right) ^{p+1}}{\left(
1+t\right) ^{p+d}}.
\end{equation*}%
Letting $A_{-}>C^{\prime }$, we have $C^{\prime }\left( 1+\gamma
^{p}A_{-}\right) ^{p+1}\leqslant A_{-}$ for small $\gamma $, whence 
\begin{equation*}
V_{\infty }-V_{W}\left( t\right) \geqslant -\gamma e^{-B_{\infty }t}-\frac{%
A_{-}\gamma ^{p+1}}{\left( 1+t\right) ^{p+d}}.
\end{equation*}%
On the other hand, with the aforementioned $A_{-}$, by (\ref{IR:RestimateL}%
), we have for small $\gamma $%
\begin{equation*}
R_{W}(t)\leqslant -c\frac{\gamma ^{p+1}}{t^{p+d}}\chi \left\{ t\geqslant
t_{0}\right\} \leqslant 0.
\end{equation*}%
Thus 
\begin{eqnarray*}
V_{\infty }-V_{W}\left( t\right) &\leqslant &-\gamma e^{-B_{0}t}-c\gamma
^{p+1}\int_{0}^{t}\left( e^{-\int_{s}^{t}Q(r)dr}\right) \frac{1}{s^{p+d}}%
\chi \left\{ s\geqslant t_{0}\right\} ds \\
&\leqslant &-\gamma e^{-B_{0}t}-c\gamma ^{p+1}\int_{t_{0}}^{t}e^{-B_{0}(t-s)}%
\frac{1}{s^{p+d}}ds,
\end{eqnarray*}%
as long as $1+t_{0}<t$, where%
\begin{equation*}
\int_{t_{0}}^{t}e^{-B_{0}(t-s)}\frac{1}{s^{p+d}}ds\geqslant
\int_{t-1}^{t}e^{-B_{0}(t-s)}\frac{1}{s^{p+d}}ds\geqslant e^{-B_{0}}\frac{1}{%
t^{p+d}}.
\end{equation*}%
Hence%
\begin{equation*}
V_{\infty }-V_{W}\left( t\right) \leqslant -\gamma e^{-B_{0}t}-\frac{c\gamma
^{p+1}}{t^{p+d}}\chi \left\{ t\geqslant t_{0}+1\right\} .
\end{equation*}%
Therefore, selecting $A_{+}\leqslant c$ yields%
\begin{equation*}
V_{\infty }-V_{W}\left( t\right) \leqslant -\gamma e^{-B_{0}t}-\frac{%
A_{+}\gamma ^{p+1}}{t^{p+d}}\chi \left\{ t\geqslant t_{0}+1\right\} .
\end{equation*}
\end{proof}

\begin{proof}[\textit{Proof of Theorem \protect\ref{ThIRExistence} (a)}]
The proof is almost identical to that in \cite{Ital2}, so we merely sketch
it here. Let $L=\max \{V_{\infty }+1,E+F_{0}(V_{\infty }+1)+C\gamma ^{p+1}\}$
and $\mathcal{K}=\{W\in \mathcal{W}\ |\ \sup (|W(t)|+|W^{\prime }(t)|\leq
L\} $. Then $\mathcal{K}$ is a compact convex set in $C([0,\infty ))$. We
define an operator $\mathcal{A}:W\rightarrow V_{W}$, where $V_{W}$ is
defined in \eqref{iteration equation}. Then $\mathcal{A}$ maps $\mathcal{K}$
into itself by Lemma \ref{IR:DeducingConditionsOnhAndg}. By the Schauder
fixed point theorem it suffices to prove that $\mathcal{A}$ is continuous in
the topology of $C([0,\infty ))$.

In order to accomplish that task, we let $W_{j}\rightarrow W$ in $%
C([0,\infty ))$, where $W_{j}\in \mathcal{K}$. Fix $T>0$ so large that the
interval $(T,\infty )$ provides a negligible contribution due to the uniform
decay in Theorem \ref{Thm:IREstimate}. Let $N$ be a positive integer. Define 
$A_{j}^{N}$ be the set of all pairs $(x,v_{x})$ such that no trajectory
passing through $(T,x,v_{x})$ has collided more than $N$ times in the time
interval $[0,T]$. Let $B_{j}^{N}$ be its complement. We write $%
R_{W_{j}}(t)=R_{W_{j}}(t;A_{j}^{N})+R_{W_{j}}(t;B_{j}^{N})$. By Theorem \ref%
{Thm:IREstimate} we have $|R_{W_{j}}(t;B_{j}^{N})|\leq (C\gamma ^{p+1})^{N}$
for $t\leq T$, while $R_{W_{j}}(t;A_{j}^{N})\rightarrow R_{W}(t;A^{N})$ in $%
C([0,T])$ by $N$ uses of the boundary condition. It follows that $%
R_{W_{j}}(t)\rightarrow R_{W}(t)$ uniformly in $[0,T].$
\end{proof}

\begin{proof}[\textit{Proof of Theorem \protect\ref{ThIRExistence} (b)}]
If $(V,f)$ is a solution in the sense of Theorem \textit{\ref{ThIRExistence}}%
, then it is a fixed point of $\mathcal{A}$, so that Theorem \ref%
{Thm:IREstimate} is valid for it. We need only check that the strict
inequalities 
\begin{equation}
\gamma e^{-B_{0}t}+\frac{A_{+}\gamma ^{p+1}}{t^{p+d}}\chi \left\{ t\geqslant
t_{0}+1\right\} <V(t)-V_{\infty }<\gamma e^{-B_{\infty }t}+\frac{A_{-}\gamma
^{p+1}}{\left( 1+t\right) ^{p+d}},  \label{asymp4}
\end{equation}%
are valid for small $t>0$. Indeed, note that at $t=0$ we have $\gamma
=V(0)-V_{\infty }<\gamma +C\gamma ^{p+1}$ and $V^{\prime
}(0)=E-F(0)=F_{0}(V_{\infty })-F_{0}(V_{\infty }+\gamma )<-\gamma \min
(F_{0}^{\prime })\leq -\gamma B_{\infty }<0$. Therefore \eqref{asymp4} is
valid for very small $t>0$ and hence for all $t>0$.
\end{proof}

%%%%%%%%%%%%%%%%%%%%%%%%%%%%%%%%%%%%%%%%%

\subsection{Properties of $\mathcal{W\label{sec:IR:properties of family}}$}

%\subsection{Proof of Theorem \protect\ref{Thm:IREstimate}\label{Sec:ProofOfIR}}

In this subsection we begin the proof of Theorem \ref{Thm:IREstimate}. We
split $R_{W}(t)=r_{W}^{L}\left( t\right) +r_{W}^{R}\left( t\right) $, where
the left contribution $r_{W}^{L}\left( t\right) $ and the right contribution 
$r_{W}^{R}\left( t\right) $ are given by 
\begin{eqnarray*}
r_{W}^{L}\left( t\right) &=&\int_{\partial \Omega _{L}\left( t\right) }dS_{%
\mathbf{x}}\int_{u_{x}\geqslant W\left( t\right) }d\mathbf{u}\ \ell (\mathbf{%
u}-\mathbf{i}W(t))\left\{ f_{0}(\mathbf{u})-f_{-}(t,\mathbf{x},\mathbf{u}%
)\right\} , \\
r_{W}^{R}\left( t\right) &=&\int_{\partial \Omega _{R}\left( t\right) }dS_{%
\mathbf{x}}\int_{u_{x}\leqslant W\left( t\right) }d\mathbf{u}\ \ell (\mathbf{%
u}-\mathbf{i}W(t))\left\{ f_{-}(t,\mathbf{x},\mathbf{u})-f_{0}(\mathbf{u}%
)\right\} .
\end{eqnarray*}%
We shall first prove some properties of the iteration family $\mathcal{W}$
defined in Definition \ref{def:W-IR}. Then we shall estimate $%
r_{W}^{L}\left( t\right) $ and $r_{W}^{R}\left( t\right) $ in Lemmas \ref%
{Lemma:IRLeft} and \ref{Lemma:IRRight}, from which Theorem \ref%
{Thm:IREstimate} will follow.

%%%%%%%%%%%%%%%%%%%%

For any function $Y:[0,\infty )\rightarrow \mathbb{R}$, we denote its
average over time intervals by%
\begin{equation*}
\left\langle Y\right\rangle _{s,t}=\frac{1}{t-s}\int_{s}^{t}Y\left( \tau
\right) d\tau ,\qquad \left\langle Y\right\rangle _{0,t}=\left\langle
Y\right\rangle _{t}.
\end{equation*}%
Thus $Y\in L^{1}(\mathbb{R})$ implies $\langle Y\rangle _{t}=O(1/t)$ for
large $t$. The family $\mathcal{W}=\left\{ W\right\} $, defined in Definiton %
\ref{def:W-IR} for the irreversal case, has the following properties.

\begin{lemma}
\label{Lemma:IR<W>t-W(t)}Let $\mathcal{W}$ be defined in Definiton \ref%
{def:W-IR} for the irreversal case. For all small enough $\gamma $ and hence
for all large enough $t_{0}$, we have%
\begin{equation*}
\left\langle W\right\rangle _{t}-W(t)\geqslant \frac{C\gamma }{t}\text{ for }%
t\geqslant t_{0},
\end{equation*}%
and%
\begin{equation*}
\left\langle W\right\rangle _{t}-W(t)\leqslant \frac{C\left( \gamma +\gamma
^{p+1}A_{-}\right) }{1+t}\text{, for all }t\geq 0.
\end{equation*}
\end{lemma}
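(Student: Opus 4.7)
The plan is to rewrite everything in terms of the centered function $Z(\tau) = W(\tau) - V_\infty$, which by Definition \ref{def:W-IR} satisfies the pointwise sandwich
\[
\gamma e^{-B_0 \tau} + \tfrac{A_+\gamma^{p+1}}{\tau^{p+d}}\chi\{\tau \geq t_0+1\} \;\leq\; Z(\tau) \;\leq\; \gamma e^{-B_\infty \tau} + \tfrac{A_-\gamma^{p+1}}{\langle \tau\rangle^{p+d}},
\]
and to observe the identity $\langle W\rangle_t - W(t) = \langle Z\rangle_t - Z(t)$. The proof then reduces to elementary integral estimates on exponential and polynomial tails.

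For the upper bound, I would simply drop the nonnegative term $Z(t)$ and bound $\langle Z\rangle_t$ from above using the upper envelope. The exponential piece contributes $\tfrac{1}{t}\int_0^t \gamma e^{-B_\infty \tau}\,d\tau \leq \gamma/(B_\infty t)$, and the polynomial piece contributes $\tfrac{A_-\gamma^{p+1}}{t}\int_0^\infty \langle \tau\rangle^{-(p+d)}\,d\tau$, which is a finite multiple of $A_-\gamma^{p+1}/t$ since $p+d>1$. This gives the desired $C(\gamma+A_-\gamma^{p+1})/t$ for $t\geq 1$. For $0<t\leq 1$, I would use the crude pointwise bound $Z(\tau)\leq \gamma + A_-\gamma^{p+1}$ together with $1/(1+t) \geq 1/2$.

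For the lower bound, I would use the exponential lower envelope (dropping the polynomial term from the lower bound costs nothing), giving
\[
\langle Z\rangle_t \geq \tfrac{\gamma}{B_0 t}\bigl(1 - e^{-B_0 t}\bigr),
\]
and then subtract the upper bound on $Z(t)$. For $t\geq t_0 = |\ln\gamma|$ and $\gamma$ small, $1-e^{-B_0 t}\geq 1-\gamma^{B_0}$ is close to $1$, so the main term is at least $\gamma/(2B_0 t)$.

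The main obstacle is showing that both error terms in $Z(t)$ are $o(\gamma/t)$ uniformly on $t \geq t_0$. For the exponential term, the map $\tau \mapsto \tau e^{-B_\infty \tau}$ attains its maximum at $\tau = 1/B_\infty$, which is less than $t_0 = |\ln\gamma|$ once $\gamma$ is small, so the map is decreasing on $[t_0,\infty)$ and
\[
\gamma e^{-B_\infty t} \;\leq\; \tfrac{\gamma}{t}\cdot t_0 e^{-B_\infty t_0} \;=\; \tfrac{\gamma}{t}\cdot |\ln\gamma|\,\gamma^{B_\infty} \;=\; o(\gamma/t).
\]
For the polynomial term, I use $\langle t\rangle^{-(p+d)} \leq 2/t$ for $t\geq 1$ (since $p+d\geq 1$), hence
\[
\tfrac{A_-\gamma^{p+1}}{\langle t\rangle^{p+d}} \;\leq\; \tfrac{2A_-\gamma^p \cdot \gamma}{t} \;=\; o(\gamma/t).
\]
Combining these, $\langle W\rangle_t - W(t) \geq \tfrac{\gamma}{t}\bigl(\tfrac{1}{2B_0} - o(1)\bigr) \geq C\gamma/t$ for $t\geq t_0$ with any $C<1/(2B_0)$, once $\gamma$ is sufficiently small. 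This completes both estimates and gives the lemma.
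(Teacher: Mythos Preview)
Your proof is correct and follows essentially the same approach as the paper: center at $V_\infty$, use the lower envelope $\gamma e^{-B_0\tau}$ for the average and the upper envelope for $W(t)$ to get the lower bound, and use the upper envelope for the average (dropping the positive $Z(t)$) to get the upper bound. The paper's version is terser---it simply asserts that for small enough $\gamma$ the exponential and polynomial error terms are absorbed into $C_1\gamma/t$ on $t\geq t_0$---whereas you spell out the absorption via the monotonicity of $\tau e^{-B_\infty\tau}$ and the bound $\langle t\rangle^{-(p+d)}\leq 1/t$; and you handle the upper bound on $[0,1]$ by a separate crude pointwise estimate, while the paper gets $C/(1+t)$ in one stroke since $\tfrac1t\int_0^t e^{-B_\infty s}\,ds$ is bounded for small $t$ and $O(1/t)$ for large $t$.
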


\begin{proof}
On the one hand,%
\begin{eqnarray*}
\left\langle W\right\rangle _{t}-W(t) &=&\frac{1}{t}\int_{0}^{t}W(s)ds-W(t)
\\
&\geqslant &\frac{1}{t}\int_{0}^{t}\left( V_{\infty }+\gamma
e^{-B_{0}s}\right) ds -\left( V_{\infty }+\gamma e^{-B_{\infty }t}+\frac{%
\gamma ^{p+1}A_{-}}{\left\langle t\right\rangle ^{p+d}}\right) \\
&\geqslant &\frac{C_{1}\gamma }{t}-\gamma e^{-B_{\infty }t}-\frac{\gamma
^{p+1}A_{-}}{\left\langle t\right\rangle ^{p+d}}
\end{eqnarray*}%
Notice that, for all small enough $\gamma $ and hence all large enough $%
t_{0} $, the second and the third terms are absorbed into the first term for 
$t\geqslant t_{0}.$ So%
\begin{equation*}
\left\langle W\right\rangle _{t}-W(t)\geqslant \frac{C\gamma }{t}\chi
\left\{ t\geqslant t_{0}\right\} .
\end{equation*}
On the other hand,%
\begin{eqnarray*}
&&\left\langle W\right\rangle _{t}-W(t) = \frac{1}{t}\int_{0}^{t}W(s)ds-W(t)
\\
&\leqslant &\frac{1}{t}\int_{0}^{t}\left( V_{\infty }+\gamma e^{-B_{\infty
}s}+\frac{\gamma ^{p+1}A_{-}}{\left\langle s\right\rangle ^{p+d}}\right) ds
- \left( V_{\infty }+\gamma e^{-B_{0}t}+\frac{\gamma ^{p+1}A_{+}}{t^{p+d}}%
\chi \left\{ t\geqslant t_{0}+1\right\} \right) \\
&\leqslant &\frac{1}{t}\int_{0}^{t}\left( \gamma e^{-B_{\infty }s}+\frac{%
\gamma ^{p+1}A_{-}}{\left\langle s\right\rangle ^{p+d}}\right) ds \
\leqslant\ \frac{C(\gamma +\gamma ^{p+1}A_{-})}{1+t}.
\end{eqnarray*}
\end{proof}

\begin{corollary}
\label{Lemma:The class of W} For small enough $\gamma $, we have

(i) \ $\left\langle W\right\rangle _{t}>W(t)$ for all $t$.

(ii) \ $\left\langle W\right\rangle _{t}$ is a decreasing function.

(iii) \ $\left\langle W\right\rangle _{t}>\left\langle W\right\rangle _{s,t}$%
, $\forall s\in \left( 0,t\right) .$
\end{corollary}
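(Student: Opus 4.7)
The plan is to chain the three properties together: (i) follows by combining Lemma~\ref{Lemma:IR<W>t-W(t)} with the monotonicity of $W$ built into Definition~\ref{def:W-IR}, (ii) is obtained by differentiating the time-average formula and invoking (i), and (iii) is a convex-combination consequence of (ii). The whole argument is essentially one page of soft manipulations, with no delicate kernel-level estimates.

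For (i) I would split the time axis at $t_{0}=|\ln\gamma|$. On $(0,t_{0}]$, condition (ii) of Definition~\ref{def:W-IR} makes $W$ strictly decreasing, so $W(\tau)>W(t)$ for every $\tau\in(0,t)$ and averaging gives $\langle W\rangle_{t}>W(t)$ directly. For $t\geq t_{0}$ the lower bound
$$\langle W\rangle_{t}-W(t)\;\geq\;\frac{C\gamma}{t}$$
from Lemma~\ref{Lemma:IR<W>t-W(t)} is already strictly positive, provided $\gamma$ is chosen small (so that $t_{0}=|\ln\gamma|$ is large enough that the exponential and power-law remainders are absorbed, as already arranged in the proof of that lemma).

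For (ii), since $W$ is continuous, the map $t\mapsto t\langle W\rangle_{t}=\int_{0}^{t}W(\tau)\,d\tau$ is $C^{1}$ on $(0,\infty)$, so $\langle W\rangle_{t}$ is itself $C^{1}$ and a direct computation yields
$$\frac{d}{dt}\langle W\rangle_{t}\;=\;\frac{W(t)-\langle W\rangle_{t}}{t}\;<\;0$$
by (i). Hence $\langle W\rangle_{t}$ is strictly decreasing on $(0,\infty)$.

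For (iii), I would use the elementary splitting identity
$$\langle W\rangle_{t}\;=\;\frac{s}{t}\,\langle W\rangle_{s}\;+\;\frac{t-s}{t}\,\langle W\rangle_{s,t},$$
obtained by breaking $\int_{0}^{t}$ at $s$. This presents $\langle W\rangle_{t}$ as a strict convex combination of $\langle W\rangle_{s}$ and $\langle W\rangle_{s,t}$. By (ii) we have $\langle W\rangle_{s}>\langle W\rangle_{t}$, and the identity can be rearranged to
$$\langle W\rangle_{t}-\langle W\rangle_{s,t}\;=\;\frac{s}{t}\bigl(\langle W\rangle_{s}-\langle W\rangle_{s,t}\bigr),$$
which, combined with the convex-combination representation, forces $\langle W\rangle_{s,t}<\langle W\rangle_{t}$.

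I do not foresee any genuine obstacle here; the only mildly subtle point is that Lemma~\ref{Lemma:IR<W>t-W(t)} supplies the lower bound in (i) only for $t\geq t_{0}$, so one must handle $(0,t_{0}]$ by a separate argument, and this is exactly the role played by the monotonicity clause (ii) of Definition~\ref{def:W-IR}.
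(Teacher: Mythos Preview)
Your proposal is correct and follows essentially the same approach as the paper: (i) is handled by splitting at $t_{0}$ and using monotonicity of $W$ on $[0,t_{0}]$ together with Lemma~\ref{Lemma:IR<W>t-W(t)} for $t\ge t_{0}$; (ii) follows by differentiating $\langle W\rangle_{t}$ and invoking (i); and (iii) is derived from an algebraic identity relating $\langle W\rangle_{t}$, $\langle W\rangle_{s}$, and $\langle W\rangle_{s,t}$ combined with (ii). The only cosmetic difference is in (iii): the paper rearranges directly to $\langle W\rangle_{s,t}-\langle W\rangle_{t}=\frac{s}{t-s}(\langle W\rangle_{t}-\langle W\rangle_{s})<0$, whereas you phrase it via the convex-combination identity, but the two are algebraically equivalent.
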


\begin{proof}
For $t\leqslant t_{0}$, (i) follows from the assumption that $W$ is
decreasing over the interval $[0,t_{0}].$ For $t\geqslant t_{0}$, we quote
Lemma \ref{Lemma:IR<W>t-W(t)}.

(ii)%
\begin{equation*}
\frac{d}{dt}\left\langle W\right\rangle _{t}=\frac{1}{t}\left( -\left\langle
W\right\rangle _{t}+W\left( t\right) \right) <0.
\end{equation*}

(iii)%
\begin{eqnarray*}
\left\langle W\right\rangle _{s,t}-\left\langle W\right\rangle _{t} &=&\frac{%
1}{t-s}\int_{s}^{t}W\left( \tau \right) d\tau -\frac{1}{t}%
\int_{0}^{t}W\left( \tau \right) d\tau \\
&=&\frac{1}{t-s}\int_{0}^{t}W\left( \tau \right) d\tau -\frac{1}{t-s}%
\int_{0}^{s}W\left( \tau \right) d\tau -\frac{1}{t}\int_{0}^{t}W\left( \tau
\right) d\tau \\
&=&\frac{s}{t-s}\left( \frac{1}{t}\int_{0}^{t}W\left( \tau \right) d\tau -%
\frac{1}{s}\int_{0}^{s}W\left( \tau \right) d\tau \right) <0,\text{ by (ii).}
\end{eqnarray*}
\end{proof}

Since $\left\langle W\right\rangle _{s,t}$ is a continuous function of $s$
and $t$, the existence of a precollision at some time earlier than $t$
requires that the velocity satisfies 
\begin{equation}
v_{x}\in \left[ \inf\limits_{s<t}\left\langle W\right\rangle
_{s,t},\sup\limits_{s<t}\left\langle W\right\rangle _{s,t}\right] =\left[
\inf_{s<t}\left\langle W\right\rangle _{s,t},\left\langle W\right\rangle _{t}%
\right] ,  \label{condition:recollision condition}
\end{equation}%
by (iii) of Corollary \ref{Lemma:The class of W}. 
%In order to make use of relation (\ref{condition:recollision condition}), 
We estimate $\inf_{s<t}\left\langle W\right\rangle _{s,t}$ by 
\begin{equation}
\left\langle W\right\rangle _{s,t} \geqslant V_{\infty }-\gamma \left\langle
g\right\rangle _{s,t}\ \geqslant V_{\infty }  \label{estimate:IRinf_W_st}
\end{equation}%
since $g\le0$. It follows that 
\begin{equation}  \label{WinfEst}
W(t)-\inf_{s<t}\left\langle W\right\rangle _{s,t} \le V_{\infty }-\gamma
h(t,\gamma )-V_{\infty } =-\gamma h(t,\gamma ) =\gamma e^{-B_{\infty }t}+%
\frac{\gamma ^{p+1}A_{-}}{\left\langle t\right\rangle ^{p+d}}.
\end{equation}

\subsection{The Left Side}

In the next lemma we estimate the force on the left side of the cylinder.

\begin{lemma}
\label{Lemma:IRLeft}Let $W\in \mathcal{W}$ be defined in Definition \ref%
{def:W-IR} and let $K$ and $a_{0}$ satisfy the Assumptions A1-A4. If $k$ and 
$a_{0}$ satisfy the Irreversal Criterion, then for all sufficiently small $%
\gamma $ we have the inequalities 
\begin{equation*}
-\frac{C\left( \gamma +\gamma ^{p+1}A_{-}\right) ^{p+1}}{\left( 1+t\right)
^{p+d}}\leqslant r_{W}^{L}\left( t\right) \leqslant -c\frac{\gamma ^{p+1}}{%
t^{d+p}}\chi \left\{ t\geqslant t_{0}\right\} .
\end{equation*}
\end{lemma}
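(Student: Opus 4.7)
The strategy is to use backward trajectories to compute $f_-$ on the left face at leading order, extract the sign of $r_W^L$ from the Irreversal Criterion, and then quantify its magnitude using Lemma \ref{Lemma:IR<W>t-W(t)} together with the power law in A3. A particle arriving at the left face at time $t$ with $u_x > W(t)$ is traced backward: either it has no prior interaction with the body, in which case $f_- = f_0$ and the contribution vanishes, or it emerged from an earlier collision on the left face at a unique time $s^*$. By Corollary \ref{Lemma:The class of W} the map $s \mapsto \phi(s) := \langle W\rangle_{s,t}$ is strictly monotone decreasing from $\langle W\rangle_t$ down to $W(t)$, so the precollision condition $u_x = \phi(s^*)$ confines $u_x$ to the interval $[W(t),\langle W\rangle_t]$. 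The geometric requirement that the backward ray strike the cross-section of the cylinder pins $u_\perp$ to a ball of radius of order $1/(t-s^*)$.

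Applying the boundary condition together with the product structure A1, and replacing the incoming density at the earlier collision by $f_0$ (the leading-order approximation), we obtain on the precollision set
\begin{equation*}
f_0(\mathbf{u}) - f_-(t,\mathbf{x},\mathbf{u}) = b(u_\perp)\Bigl[a_0(u_x) - \int_0^\infty k(u_x - W(s^*),z)\,a_0(z + W(s^*))\,dz\Bigr],
\end{equation*}
after the substitution $w_x = z + W(s^*)$. As $\gamma \to 0$ both $u_x$ and $W(s^*)$ tend to $V_\infty$, so the bracket tends to $a_0(V_\infty) - \int_0^\infty k(0,z)\,a_0(z+V_\infty)\,dz$, which is \emph{negative} by (\ref{IRREVcrit}). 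Assumptions A2 and A4 provide the uniform dominating function needed to pass to this limit and to absorb the tail $|u_x|\gg\gamma$. Hence $r_W^L(t)\leq 0$ with a strictly negative leading constant $-C_{\mathrm{IR}}$.

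For the quantitative bounds, the $u_\perp$-integration produces a factor $(t-s^*)^{-(d-1)}$ --- which is where the spatial dimension $d$ enters --- and is nontrivial because $b(0) > 0$ by A1. Changing variable from $u_x$ to $s^*$ via $du_x = -\phi'(s^*)\,ds^* = \frac{W(s^*)-\phi(s^*)}{t-s^*}\,ds^*$ and using $\ell(u_x-W(t)) \asymp (u_x-W(t))^p$ from A3, the estimate reduces to controlling
\begin{equation*}
\int_0^t \frac{\bigl(\phi(s^*)-W(t)\bigr)^p\bigl(W(s^*)-\phi(s^*)\bigr)}{(t-s^*)^d}\,ds^*.
\end{equation*}
For the upper bound $r_W^L\leq -c\gamma^{p+1}/t^{p+d}$ on $\{t\geq t_0\}$, the dominant contribution comes from early $s^* \in [0,O(1)]$, where Lemma \ref{Lemma:IR<W>t-W(t)} delivers $\phi(s^*)-W(t) \gtrsim \gamma/t$ and $W(s^*)-\phi(s^*) \gtrsim \gamma$, while $(t-s^*)^d \asymp t^d$; multiplying these factors produces precisely $\gamma^{p+1}/t^{p+d}$. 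For the lower bound $r_W^L \geq -C(\gamma+\gamma^{p+1}A_-)^{p+1}/(1+t)^{p+d}$ we instead use on the whole $s^*$-range the uniform upper estimate $\phi(s^*)-W(t) \leq \langle W\rangle_t-W(t) \leq C(\gamma+\gamma^{p+1}A_-)/(1+t)$ from Lemma \ref{Lemma:IR<W>t-W(t)}, together with integrability of the remaining kernel in $s^*$.

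The principal obstacle is controlling the corrections to the ``one prior collision'' picture: particles with two or more precollisions contribute to $f_-$ and must be shown to be of lower order. Each additional precollision supplies another $(t-s)^{-(d-1)}$ factor from the perpendicular integration together with an additional velocity-smallness factor, so the multiple-collision series converges; A2 and A4 furnish the required uniform bounds. A secondary care is needed near $s^* = t$, where the kernel $(t-s^*)^{-d}$ is formally singular: the Taylor expansion $\phi(s^*) - W(t) \sim \tfrac{1}{2}|W'(t)|(t-s^*)$ makes $(\phi(s^*)-W(t))^p$ vanish fast enough to keep the integral finite for every $p\in (0,2]$.
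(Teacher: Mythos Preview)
Your strategy is in the right spirit—trace backward, use the Irreversal Criterion for the sign, and quantify via Lemma \ref{Lemma:IR<W>t-W(t)} and A3—but the execution diverges from the paper in two places where your version has genuine gaps.

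\textbf{Multiple precollisions.} The paper does not treat higher-order precollisions as a series controlled by extra $(t-s)^{-(d-1)}$ factors. Instead it bounds $a_+^\ast:=\sup a_+(\tau,\xi;u_x)$ in one stroke: plugging the boundary condition back into itself and using that the precollision $u_x$-window has length $\langle W\rangle_t-W(t)\le C\gamma$ together with A2 gives $a_+^\ast\le C\gamma\,a_+^\ast+C$, hence $a_+^\ast\le C$ for $\gamma$ small. Your mechanism (``each additional precollision supplies another $(t-s)^{-(d-1)}$ factor'') is not the actual source of smallness and is not justified; the smallness per iteration is the $O(\gamma)$ length of the velocity window, not a perpendicular-volume factor. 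Similarly, for the crucial lower bound on $-r_W^L$ the paper does not replace $f_-$ at the earlier collision by $f_0$; it simply drops the $\chi_1$-piece in the boundary condition and integrates from $V_\infty+2\gamma$ to get $f_+\ge (a_0(V_\infty)+\delta)\,b(v_\perp)$ directly from the criterion.

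\textbf{Integration in $u_x$ vs.\ change of variable to $s^\ast$.} The paper never changes variable to $s^\ast$. It integrates $\ell(u_x-W(t))$ over $u_x\in[W(t),\langle W\rangle_t]$, which by A3 is $\asymp(\langle W\rangle_t-W(t))^{p+1}$, and then invokes Lemma \ref{Lemma:IR<W>t-W(t)}. Your change of variable requires $\phi(s)=\langle W\rangle_{s,t}$ to be strictly monotone in $s$, which is \emph{not} what Corollary \ref{Lemma:The class of W} gives (it only asserts $\phi(s)<\phi(0^+)=\langle W\rangle_t$). More importantly, your upper bound on $|r_W^L|$ is incomplete: bounding $\phi(s^\ast)-W(t)$ uniformly by $C(\gamma+\gamma^{p+1}A_-)/(1+t)$ and citing ``integrability of the remaining kernel'' does not produce the extra $(1+t)^{-(d-1)}$. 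The paper obtains it by splitting at $\tau=t/2$: for $\tau<t/2$ the $u_\perp$-ball has radius $\le 4r/t$ and gives $(1+t)^{-(d-1)}$; for $\tau\ge t/2$ the $u_\perp$-integral is only $O(1)$, but then one uses the sharper range $u_x\le V_\infty+CM(t)$ with $M(t)\le (\gamma+\gamma^{p+1}A_-)\langle t\rangle^{-(p+d)}$ to recover the full $(1+t)^{-(p+d)}$ decay. Without this split your argument does not close.
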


\begin{proof}
To establish upper and lower bounds of $-r_{W}^{L}\left( t\right),$ we need
upper and lower bounds of $f_{+}(t,\mathbf{x};\mathbf{v}).$ Recall the
boundary condition on the left end of the cylinder%
\begin{equation*}
f_{+}(t,\mathbf{x};\mathbf{v})=\int_{u_{x}\geqslant W\left( t\right)
}K\left( \mathbf{v}-\mathbf{i}W\left( t\right) ;\mathbf{u}-\mathbf{i}W\left(
t\right) \right) f_{-}(t,\mathbf{x};\mathbf{u})d\mathbf{u}.
\end{equation*}
Motivated by condition (\ref{condition:recollision condition}), we write the
precollision characteristic function as 
\begin{eqnarray*}
\chi _{0}(t,\mathbf{u}) &=&\chi \left\{ \mathbf{u:}\ \forall s\in (0,t),%
\text{ either }u_{x}\neq \left\langle W\right\rangle _{s,t}\text{ or }%
\left\vert u_{\perp }\right\vert >\frac{2r}{t-s}\right\} , \\
\chi _{1}(t,\mathbf{u}) &=&\chi \left\{ \mathbf{u:\exists }s\in (0,t)\text{
s.t. }u_{x}=\left\langle W\right\rangle _{s,t}\text{ and }\left\vert
u_{\perp }\right\vert \leqslant \frac{2r}{t-s}\right\} .
\end{eqnarray*}%
We observe that if the precollisions occurred at a sequence of earlier times 
$t_{j}$ converging to $t$, it would then follow that $v_{x}=W(t)$, so there
would be no contribution to the force since $\ell (0)=0$. In light of this
observation, we can always assume that there is a first precollision, that
is, a collision that occurs at an earlier time closest to $t$. In such a
case, let $\tau $ be the time and $\mathbf{\xi }$ be the position of that
first precollision. Of course, $\tau $ and $\mathbf{\xi }$ depend on $t,%
\mathbf{x,u}$. These notations enable us to write 
\begin{equation}
f_{-}(t,\mathbf{x};\mathbf{u})=f_{+}(\tau ,\mathbf{\xi };\mathbf{u})\chi
_{1}(t,\mathbf{u})+f_{0}\left( \mathbf{u}\right) \chi _{0}(t,\mathbf{u}).
\label{equation:f- with precollision}
\end{equation}%
Putting (\ref{equation:f- with precollision}) into the boundary condition,
we have 
\begin{eqnarray*}
f_{+}(t,\mathbf{x};\mathbf{v}) &=&\int_{u_{x}\geqslant W\left( t\right)
}K\left( \mathbf{v}-\mathbf{i}W\left( t\right) ;\mathbf{u}-\mathbf{i}W\left(
t\right) \right) \\
&&\times \lbrack f_{+}(\tau ,\mathbf{\xi };\mathbf{u})\chi _{1}(t,\mathbf{u}%
)+f_{0}\left( \mathbf{u}\right) \chi _{0}(t,\mathbf{u})]d\mathbf{u}.
\end{eqnarray*}
Because the momentum is only transferred horizontally, we may rewrite this
equation as 
\begin{eqnarray}
a_{+}(t,\mathbf{x;}v_{x})b(v_{\bot }) &=& b(v_{\bot })\int_{u_{x}\geqslant
W\left( t\right) }k(v_{x}-W(t),u_{x}-W(t))  \label{eqn:f_+ iterate once} \\
&&\times \{a_{+}(\tau ,\mathbf{\xi ;}u_{x})b(u_{\bot })\chi _{1}(t,\mathbf{u}%
)+a_{0}(u_{x})b(u_{\bot })\chi _{0}(t,\mathbf{u})\}d\mathbf{u}.  \notag
\end{eqnarray}%
Since $b(v_{\bot })$ could possibly vanish, we do not divide by $b(v_{\bot
}) $ on both sides. In order to get a lower bound of $f_{+}(t,\mathbf{x};%
\mathbf{v}),$ we notice that 
\begin{eqnarray*}
f_{+}(t,\mathbf{x};\mathbf{v}) &=&a_{+}(t,\mathbf{x;}v_{x})b(v_{\bot }) \\
&\geqslant &b(v_{\bot })\int_{u_{x}\geqslant W\left( t\right)
}k(v_{x}-W(t),u_{x}-W(t))a_{0}(u_{x})b(u_{\bot })\chi _{0}(t,\mathbf{u})d%
\mathbf{u}. \\
&\geqslant &b(v_{\bot })\int_{V_{\infty }+2\gamma }^{+\infty
}k(v_{x}-W(t),u_{x}-W(t))a_{0}(u_{x})du_{x},
\end{eqnarray*}%
where we used in the last line the inequality 
\begin{equation*}
W\left( t\right) \leqslant V_{\infty }+\gamma e^{-B_{\infty }t}+\frac{\gamma
^{p+1}A_{-}}{\left\langle t\right\rangle ^{p+d}}\leqslant V_{\infty
}+2\gamma.
\end{equation*}%
Thus for some $s$ we have 
\begin{equation*}
u_{x}=\left\langle W\right\rangle _{s,t}\leqslant V_{\infty }+2\gamma
\end{equation*}%
so that $\chi _{0}(t,\mathbf{u})=1$. Now recall the Irreversal Criterion 
\begin{equation*}
\int_{u_{x}\geqslant V_{\infty }}k(0,u_{x}-V_{\infty
})a_{0}(u_{x})du_{x}>a_{0}(V_{\infty }).
\end{equation*}%
It follows from A.4 that there exists $\delta >0$ such that%
\begin{equation*}
\inf_{_{\substack{ t,x\mathbf{\in \partial \Omega (}t\mathbf{)}  \\ v_{x}\in %
\left[ V_{\infty }-2\gamma ,V_{\infty }+2\gamma \right] }}%
}\int_{u_{x}\geqslant V_{\infty }+2\gamma
}k(v_{x}-W(t),u_{x}-W(t))a_{0}(u_{x})du_{x}\geqslant a_{0}(V_{\infty
})+\delta ,
\end{equation*}%
for all small enough $\gamma $. Hence we have obtained the lower bound 
\begin{equation}
f_{+}(t,\mathbf{x};\mathbf{v})\geqslant (f_{0}(\mathbf{v})+\delta b(v_\perp)
)\quad \text{ for }\left\vert v_{x}-V_{\infty }\right\vert \leqslant 2\gamma
.  \label{bound:IR lower bound of IL}
\end{equation}%
To gain an upper bound of $f_{+}(t,\mathbf{x};\mathbf{v}),$ we return to (%
\ref{eqn:f_+ iterate once}) and observe that 
\begin{eqnarray*}
a_{+}(t,\mathbf{x;}v_{x})b(v_{\bot }) &\leqslant &a_{+}^{\ast
}\int_{W(t)}^{\left\langle W\right\rangle _{t}}k(v_{x}-W(t),u_{x}-W(t))du_{x}
\\
&&+b(v_{\bot })\int_{u_{x}\geqslant W\left( t\right)
}k(v_{x}-W(t),u_{x}-W(t))a_{0}(u_{x})du_{x}
\end{eqnarray*}%
by \eqref{condition:recollision condition}, where 
\begin{equation}
a_{+}^{\ast }=\sup \left\{ a_{+}(\tau ,\xi ;u_{x})\text{ }|\text{ }\xi \in
\partial \Omega (\tau )\text{, }\tau \in \left[ 0,\infty \right) \text{, and 
}u_{x}\in \left[ V_{\infty }-2\gamma ,V_{\infty }+2\gamma \right] \right\} .
\label{def:a+*}
\end{equation}
By the fact that $\left\langle W\right\rangle _{t}-W\left( t\right)
\leqslant C\gamma ,$ proven in Lemma \ref{Lemma:IR<W>t-W(t)}, we have%
\begin{equation*}
a_{+}(t,\mathbf{x;}v_{x})b(v_{\bot })\leqslant b(v_{\bot })C\gamma
a_{+}^{\ast }+b(v_{\bot })C,
\end{equation*}%
using Assumptions {A2 }and {A4}. Thus taking the supremum over all times $%
t\in \left[ 0,\infty \right) $, positions $x\in \partial \Omega (t)$ and
velocities $v_{x}\in \left[ V_{\infty }-2\gamma ,V_{\infty }+2\gamma \right] 
$, we have 
\begin{equation*}
b(v_{\bot })a_{+}^{\ast }\leqslant b(v_{\bot })C\gamma a_{+}^{\ast
}+Cb(v_{\bot }).
\end{equation*}%
That is, 
\begin{equation}
b(v_{\bot })a_{+}^{\ast }\leqslant \frac{Cb(v_{\bot })}{1-C\gamma }\leqslant
Cb(v_{\bot }),  \label{bound:IR upper bound for f+}
\end{equation}%
for $\gamma <\frac{1}{C}$. This is an upper bound for $f_{+}(t,\mathbf{x};%
\mathbf{v}).$

We are now ready to establish upper and lower bounds of $-r_{W}^{L}\left(
t\right) $ for the irreversal case. We begin with the crucial lower bound of 
$-r_{W}^{L}\left( t\right) $ because that is the main reason why $%
r_{W}^{L}+r_{W}^{R}\leqslant 0$. Using the lower bound (\ref{bound:IR lower
bound of IL}) of $f_{+}\left( \tau ,\mathbf{\xi },\mathbf{u}\right) $, we get%
\begin{eqnarray}
-r_{W}^{L}\left( t\right) &=&\int_{\partial \Omega _{L}\left( t\right) }dS_{%
\mathbf{x}}\int_{u_{x}\geqslant W\left( t\right) }d\mathbf{u}\ \ell
(u_{x}-W(t))\left\{ f_{-}(t,\mathbf{x},\mathbf{u})-f_{0}(\mathbf{u})\right\}
\notag \\
&=&\int_{\partial \Omega _{L}\left( t\right) }dS_{\mathbf{x}%
}\int_{u_{x}\geqslant W\left( t\right) }d\mathbf{u}\ \ell
(u_{x}-W(t))[f_{+}\left( \tau ,\mathbf{\xi },\mathbf{u}\right) \chi
_{1}\left( t,\mathbf{u}\right) +f_{0}(\mathbf{u})\chi _{0}\left( t,\mathbf{u}%
\right) -f_{0}(\mathbf{u})]  \notag \\
&=&\int_{\partial \Omega _{L}\left( t\right) }dS_{\mathbf{x}%
}\int_{\left\vert u_{\bot }\right\vert \leqslant \frac{2r}{t-\tau }}du_{\bot
}\int_{W\left( t\right) }^{\left\langle W\right\rangle _{t}}du_{x}\ \ell
(u_{x}-W(t))(f_{+}\left( \tau ,\mathbf{\xi },\mathbf{u}\right) -f_{0}(%
\mathbf{u}))  \label{eqn:-RwL} \\
&\geqslant &\int_{\partial \Omega _{L}\left( t\right) }dS_{\mathbf{x}%
}\int_{\left\vert u_{\bot }\right\vert \leqslant \frac{2r}{t-\tau }}du_{\bot
}b(u_{\perp })\int_{W\left( t\right) }^{\left\langle W\right\rangle _{t}}du\
\ell (u_{x}-W(t))\left( a_{0}(V_{\infty })+\delta -a_{0}(u_{x})\right) 
\notag
\end{eqnarray}%
because $u_x = \langle W\rangle_{\tau,t} \le \langle W\rangle_t$ for some $%
\tau$. For small enough $\gamma $, by continuity of $a_{0}$ we have 
\begin{equation*}
a_{0}(V_{\infty })+\delta -a_{0}(u_{x})\geqslant \frac{\delta }{2}>0.
\end{equation*}
We then infer via A3 that 
\begin{equation*}
-r_{W}^{L}\left( t\right) \geqslant C\frac{\delta }{2}\int_{\left\vert
u_{\bot }\right\vert \leqslant \frac{2r}{t-\tau }}b(u_{\perp })du_{\bot
}\int_{W\left( t\right) }^{\left\langle W\right\rangle _{t}}du_{x}\ell
(u_{x}-W(t))\geqslant C\frac{\delta }{2}\frac{\left( \left\langle
W\right\rangle _{t}-W(t)\right) ^{p+1}}{\left( 1+t\right) ^{d-1}}\geqslant 0.
\end{equation*}
We note that in this expression the integral over $u_\perp$ is at least $C
t^{1-d}$ because $b(0)>0$. Furthermore, as proven in Lemma \ref%
{Lemma:IR<W>t-W(t)}, we have%
\begin{equation*}
\left\langle W\right\rangle _{t}-W(t)\geqslant \frac{C\gamma }{t}\chi
\left\{ t\geqslant t_{0}\right\} ,
\end{equation*}%
whence 
\begin{equation*}
-r_{W}^{L}\left( t\right) \geqslant C\frac{\gamma ^{p+1}}{t^{d+p}}\chi
\left\{ t\geqslant t_{0}\right\}
\end{equation*}%
for small enough $\gamma $. This is the desired lower bound of $-r_{W}^{L}$.

By the upper bound \eqref{bound:IR upper bound for f+} of $f_{+}\left( \tau ,%
\mathbf{\xi },\mathbf{u}\right) $ and Lemma \ref{Lemma:IR<W>t-W(t)}, we now
determine an upper bound for $-r_{W}^{L}.$ Indeed, by (\ref{eqn:-RwL}), 
\begin{eqnarray*}
\left\vert -r_{W}^{L}\left( t\right) \right\vert &=&\left\vert
\int_{\partial \Omega _{L}\left( t\right) }dS_{\mathbf{x}}\int_{W\left(
t\right) }^{\left\langle W\right\rangle _{t}}du_{x}\int_{\left\vert u_{\bot
}\right\vert \leqslant \frac{2r}{t-\tau }}du_{\bot }\ell (u_{x}-W(t))\left[
f_{+}\left( \tau ,\mathbf{\xi },\mathbf{u}\right) -f_{0}(\mathbf{u})\right]
\right\vert \\
&\leqslant &\int_{\partial \Omega _{L}\left( t\right) }dS_{\mathbf{x}%
}\int_{W\left( t\right) }^{\left\langle W\right\rangle
_{t}}du_{x}\int_{\left\vert u_{\bot }\right\vert \leqslant \frac{2r}{t-\tau }%
}du_{\bot }\ell (u_{x}-W(t))\left[ f_{+}\left( \tau ,\mathbf{\xi },\mathbf{u}%
\right) +f_{0}(\mathbf{u})\right] \\
&\leqslant &C\int_{W\left( t\right) }^{\left\langle W\right\rangle
_{t}}du_{x}\int_{\left\vert u_{\bot }\right\vert \leqslant \frac{2r}{t-\tau }%
}du_{\bot }\ell (u_{x}-W(t))\ b(u_{\bot }).
\end{eqnarray*}%
Splitting the integral according to the regions $\tau <t/2$ and $\tau \geq
t/2$, we have 
\begin{eqnarray*}
\left\vert r_{W}^{L}\left( t\right) \right\vert &\leqslant &\frac{C\left(
\left\langle W\right\rangle _{t}-W(t)\right) ^{p+1}}{\left( 1+t\right) ^{d-1}%
}+C\int_{W\left( t\right) }^{\left\langle W\right\rangle
_{t}}du_{x}\int_{\left\vert u_{\bot }\right\vert \leqslant \frac{2r}{t-\tau }%
\text{,}\tau \geqslant \frac{t}{2}}du_{\bot }\ell (u_{x}-W(t))b(u_{\bot }) \\
&=&I+II.
\end{eqnarray*}
By Assumption A3 and Lemma \ref{Lemma:IR<W>t-W(t)},%
\begin{equation*}
I\leqslant \frac{C\left( \frac{C\left( \gamma +\gamma ^{p+1}A_{-}\right) }{%
1+t}\right) ^{p+1}}{\left( 1+t\right) ^{d-1}}\leqslant C\frac{\left( \gamma
+\gamma ^{p+1}A_{-}\right) ^{p+1}}{\left( 1+t\right) ^{d+p}}.
\end{equation*}
For the second term \textit{{II}, by the precollision condition (\ref%
{condition:recollision condition}) we notice that 
\begin{eqnarray*}
u_{x} &=&\left\langle W\right\rangle _{\tau ,t}\leqslant V_{\infty }-\gamma
\left\langle h\right\rangle _{\tau ,t} \\
&\leqslant &V_{\infty }+\sup_{\frac{t}{2}\leqslant \tau \leqslant t}\frac{%
\gamma }{t-\tau }\int_{\tau }^{t}\left( e^{-B_{\infty }r}+\frac{\gamma
^{p}A_{-}}{\left\langle r\right\rangle ^{p+d}}\right) dr \\
&\leqslant &V_{\infty }+CM(t),
\end{eqnarray*}%
where 
\begin{equation*}
M(t)=\gamma e^{-B_{\infty }\frac{t}{2}}+\frac{\gamma ^{p+1}A_{-}}{%
\left\langle t\right\rangle ^{p+d}}\leqslant \frac{\gamma +\gamma ^{p+1}A_{-}%
}{\left\langle t\right\rangle ^{p+d}}.
\end{equation*}%
With Assumption A3, the above inequality allows us to estimate the second
term as 
\begin{eqnarray*}
II &\leqslant &C\int_{W(t)}^{V_{\infty }+CM(t)}du_{x}\int_{\left\vert
u_{\bot }\right\vert \leqslant \frac{2r}{t-\tau }\text{,}\tau \geqslant 
\frac{t}{2}}du_{\bot }\ell (u_{x}-W(t))\ b(u_{\bot }) \\
&\leqslant &C\int_{0}^{V_{\infty }+CM(t)-W(t)}\left\vert z\right\vert ^{p}dz
\leqslant C\left[ V_{\infty }-W(t)+CM(t)\right] ^{p+1} \leqslant C\left[ M(t)%
\right] ^{p+1} \\
&\leqslant &\ C\left( \frac{\gamma +\gamma ^{p+1}A_{-}}{\left\langle
t\right\rangle ^{p+d}}\right) ^{p+1}
\end{eqnarray*}
because $V_{\infty }-W(t)\leqslant 0$. Putting $I$ and $II$ together, we have%
\begin{equation*}
\left\vert r_{W}^{L}\left( t\right) \right\vert \leqslant C\frac{\left(
\gamma +\gamma ^{p+1}A_{-}\right) ^{p+1}}{\left( 1+t\right) ^{d+p}},
\end{equation*}%
which is the claimed upper bound for $-r_{W}^{L}\left( t\right) $. }
\end{proof}

%%%%%%%%%%%%%%%%%%%%%%%

\subsection{The Right Side\label{sec:right}}

We now proceed to bound the force $\left\vert r_{W}^{R}\right\vert $ on the
right side of the cylinder.

\begin{lemma}
\label{Lemma:IRRight}Under the same assumptions as in Lemma \ref%
{Lemma:IRLeft}, we have 
\begin{equation*}
\left\vert r_{W}^{R}\left( t\right) \right\vert \leqslant \frac{C\gamma
^{p+1}A_{-}^{p+1}}{t^{\left( p+d\right) \left( p+1\right) }}\chi \left\{
t\geqslant t_{0}\right\} .
\end{equation*}
\end{lemma}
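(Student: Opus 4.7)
The plan is to mirror the upper-bound half of Lemma \ref{Lemma:IRLeft}, while capitalising on the fact that in the irreversal regime, right-side precollisions are geometrically squeezed into a very thin slab of velocity space. First, inserting the precollision identity $f_- = f_+\chi_1 + f_0\chi_0$ into the definition of $r_W^R$ yields
\begin{equation*}
r_W^R(t)=\int_{\partial\Omega_R(t)}dS_{\mathbf x}\int_{u_x\leqslant W(t)}d\mathbf u\,\ell(u_x-W(t))\bigl[f_+(\tau,\xi,\mathbf u)-f_0(\mathbf u)\bigr]\chi_1(t,\mathbf u).
\end{equation*}
A repeat of the argument leading to \eqref{bound:IR upper bound for f+}, now applied at the right end, furnishes the $L^\infty$ bound $f_+(\tau,\xi,\mathbf u)\leqslant Cb(u_\perp)$, whence $|f_+-f_0|\leqslant Cb(u_\perp)$, and the $u_\perp$-integration contributes at most a constant via $\int b=1$.

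The geometric heart of the argument is the following. A right-side precollision at time $\tau<t$ demands (i) $u_x=\langle W\rangle_{\tau,t}$, (ii) $u_x>W(\tau)$ (the particle exited the previous right-end collision faster than the body), and (iii) $u_x\leqslant W(t)$ (the body catches it now). Since $W\geqslant V_\infty$ throughout, by Definition \ref{def:W-IR}, these combine to give $V_\infty\leqslant W(\tau)<u_x\leqslant W(t)\leqslant V_\infty+M(t)$, where $M(t):=\gamma e^{-B_\infty t}+\gamma^{p+1}A_-/\langle t\rangle^{p+d}$ is the upper envelope $-\gamma h(t,\gamma)$. Therefore the admissible $u_x$-range has length at most $M(t)$ and $|u_x-W(t)|\leqslant M(t)$. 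Moreover, if $t\leqslant t_0$ then the monotonicity clause (ii) of Definition \ref{def:W-IR} forces $W(\tau)>W(t)$ for every $\tau<t$, which is incompatible with $W(\tau)<u_x\leqslant W(t)$; hence the precollision set is empty and $r_W^R(t)=0$ in this regime, accounting for the factor $\chi\{t\geqslant t_0\}$.

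Plugging these facts and A3 ($\ell(u_x-W(t))\leqslant C|u_x-W(t)|^p$) into the integral gives
\begin{equation*}
|r_W^R(t)|\leqslant C\int_{W(t)-M(t)}^{W(t)}|u_x-W(t)|^p\,du_x\leqslant CM(t)^{p+1}.
\end{equation*}
Using $(a+b)^{p+1}\leqslant 2^p(a^{p+1}+b^{p+1})$, one has $M(t)^{p+1}\leqslant C\gamma^{p+1}e^{-(p+1)B_\infty t}+C\gamma^{(p+1)^2}A_-^{p+1}/\langle t\rangle^{(p+d)(p+1)}$. For $t\geqslant t_0=|\ln\gamma|$ and $\gamma$ small, the function $|\ln\gamma|^{(p+d)(p+1)}\gamma^{(p+1)B_\infty}$ tends to $0$, so the exponential term is absorbed into $C\gamma^{p+1}A_-^{p+1}/t^{(p+d)(p+1)}$; meanwhile $\gamma^{(p+1)^2}\leqslant\gamma^{p+1}$ for $\gamma\in(0,1)$ handles the polynomial term, yielding the claim. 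The main obstacle is the clean identification of the precollision slab above; once the irreversal-specific lower bound $W\geqslant V_\infty$ is used to pin $u_x$ to $[V_\infty,W(t)]$, the rest is routine A3 estimation combined with exponential-beats-polynomial comparison.
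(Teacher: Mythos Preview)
Your argument is correct and in fact slightly leaner than the paper's. Both proofs begin by showing $r_W^R(t)=0$ for $t\le t_0$ via monotonicity, and both obtain the $L^\infty$ bound $f_+\le Cb(u_\perp)$ on the right face. The paper then splits the remaining integral at $\tau=t/2$ to exploit the perpendicular decay $|u_\perp|\le 2r/(t-\tau)$ in the early-collision piece, but its late-collision piece (which discards the $u_\perp$ restriction just as you do) already produces the dominant contribution $CM(t)^{p+1}$, so the split is inessential here. You bypass it entirely and arrive at the same $CM(t)^{p+1}$ bound directly, followed by the same exponential-versus-polynomial absorption at $t\ge t_0=|\ln\gamma|$.

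One small point: your step (ii), $u_x>W(\tau)$, tacitly assumes the precollision occurred on the \emph{right} face. In dimension $d>1$ a particle can in principle exit the left face and wrap around. The paper avoids this by reading off $u_x=\langle W\rangle_{\tau,t}\ge V_\infty$ directly from \eqref{estimate:IRinf_W_st}, which holds regardless of which face the precollision was on. Your conclusion $u_x\ge V_\infty$ is still correct (and your slab $[V_\infty,W(t)]$ even contains the paper's $[\inf_s\langle W\rangle_{s,t},W(t)]$), but the cleaner justification is the paper's averaging route rather than the post-collision sign condition.
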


\begin{proof}
We first notice that $r_{W}^{R}\left( t\right) =0$ for all $t\leqslant t_{0}$
because $W$ is decreasing. In fact, suppose that on the right there is a
precollision at time $\tau $ and a later collision at time $t\leq t_{0}$. If
the velocity of the particle in the time period $\left( \tau ,t\right) $ is $%
u_{x},$ then $u_{x}\geqslant W\left( \tau \right) $ and $u_{x}\leqslant
W(t)<W(\tau )$ which is a contradiction.

Taking $t\geqslant t_{0}$ and recalling the boundary condition on the right
side of the cylinder, we have 
\begin{equation*}
f_{+}(t,\mathbf{x};\mathbf{v})=\int_{u_{x}\leqslant W\left( t\right)
}K\left( \mathbf{v}-\mathbf{i}W\left( t\right) ;\mathbf{u}-\mathbf{i}W\left(
t\right) \right) f_{-}(t,\mathbf{x};\mathbf{u})d\mathbf{u}.
\end{equation*}%
Plugging in the precollision condition (\ref{equation:f- with precollision})
again, namely 
\begin{equation*}
f_{-}(t,\mathbf{x};\mathbf{u})=f_{+}(\tau ,\mathbf{\xi };\mathbf{u})\chi
_{1}(t,\mathbf{u})+f_{0}\left( \mathbf{u}\right) \chi _{0}(t,\mathbf{u}),
\end{equation*}%
we have%
\begin{eqnarray*}
f_{+}(t,\mathbf{x};\mathbf{v}) &=&\int_{u_{x}\leqslant W\left( t\right)
}K\left( \mathbf{v}-\mathbf{i}W\left( t\right) ;\mathbf{u}-\mathbf{i}W\left(
t\right) \right) \\
&&\times \left\{ f_{+}(\tau ,\mathbf{\xi };\mathbf{u})\chi _{1}(t,\mathbf{u}%
)+f_{0}\left( \mathbf{u}\right) \chi _{0}(t,\mathbf{u})\right\} d\mathbf{u}
\\
&=&b(v_{\bot })\int_{u_{x}\leqslant W\left( t\right)
}k(v_{x}-W(t),u_{x}-W(t)) \\
&&\times \left\{ a_{+}(\tau ,\mathbf{\xi ;}u_{x})b(u_{\bot })\chi _{1}(t,%
\mathbf{u})+a_{0}(u_{x})b(u_{\bot })\chi _{0}(t,\mathbf{u})\right\} d\mathbf{%
u}.
\end{eqnarray*}%
We then estimate 
\begin{eqnarray*}
f_{+}(t,\mathbf{x};\mathbf{v}) &\leqslant &b(v_{\bot
})\int_{\inf_{s<t}\left\langle W\right\rangle _{s,t}}^{W\left( t\right)
}k(v_{x}-W(t),u_{x}-W(t))a_{+}(\tau ,\mathbf{\xi ;}u_{x})du_{x} \\
&&+b(v_{\bot })\int_{-\infty }^{W\left( t\right)
}k(v_{x}-W(t),u_{x}-W(t))a_{0}(u_{x})du_{x}
\end{eqnarray*}%
Assumption A4 takes care of the second term. To estimate the first term, we
must control the size of $W\left( t\right) -\inf_{s<t}\left\langle
W\right\rangle _{s,t}.$ Recalling (\ref{WinfEst}), 
\begin{equation*}
W\left( t\right) -\inf_{s<t}\left\langle W\right\rangle _{s,t}\leqslant
\gamma e^{-B_{\infty }t}+\frac{\gamma ^{p+1}A_{-}}{\left\langle
t\right\rangle ^{p+d}},
\end{equation*}
we estimate the first term by%
\begin{equation*}
b(v_{\bot })\int_{\inf_{s<t}\left\langle W\right\rangle _{s,t}}^{W\left(
t\right) }k(v_{x}-W(t),u_{x}-W(t))a_{+}(\tau ,\mathbf{\xi ;}%
u_{x})du_{x}\leqslant b(v_{\bot })C\gamma a_{+}^{\ast }.
\end{equation*}%
where $a_{+}^{\ast }$ is defined in (\ref{def:a+*}). Thus, taking supremums
as in the earlier estimate (\ref{bound:IR upper bound for f+}), we have $%
b(v_{\bot })a_{+}^{\ast }\leqslant b(v_{\bot })C\gamma a_{+}^{\ast
}+Cb(v_{\bot }). $ Since $\gamma $ is small, we deduce that 
\begin{equation*}
b(v_{\bot })a_{+}^{\ast }\leqslant Cb(v_{\bot }).
\end{equation*}%
With this upper bound of $f_{+}(\tau ,\mathbf{\xi },u)$, we arrive at 
\begin{eqnarray*}
\left\vert r_{W}^{R}\left( t\right) \right\vert &=&\left\vert \int_{\partial
\Omega _{R}(t)}dS_{x}\int_{u_{x}\leqslant W\left( t\right) }d\mathbf{u}\
\ell (u_{x}-W(t))\left[ f_{-}(t,\mathbf{x};\mathbf{u})-f_{0}(\mathbf{u})%
\right] \right\vert \\
&=&\left\vert \int_{\partial \Omega _{R}(t)}dS_{x}\int_{u_{x}\leqslant
W\left( t\right) }d\mathbf{u}\ \ell (u_{x}-W(t))\left[ f_{+}(\tau ,\mathbf{%
\xi };\mathbf{u})\chi _{1}\left( t,\mathbf{u}\right) +f_{0}(\mathbf{u})\chi
_{0}(t,\mathbf{u})-f_{0}(\mathbf{u})\right] \right\vert \\
&=&\left\vert \int_{\partial \Omega
_{R}(t)}dS_{x}\int_{\inf_{s<t}\left\langle W\right\rangle _{s,t}}^{W(t)}d%
\mathbf{u}\ \ell (u_{x}-W(t))\left[ f_{+}(\tau ,\mathbf{\xi };\mathbf{u}%
)-f_{0}(\mathbf{u})\right] \right\vert \\
&\leqslant &\int_{\partial \Omega _{R}(t)}dS_{x}\int_{\inf_{s<t}\left\langle
W\right\rangle _{s,t}}^{W(t)}d\mathbf{u}\ \ell (u_{x}-W(t))\left[ f_{+}(\tau
,\mathbf{\xi };\mathbf{u})+f_{0}(\mathbf{u})\right] \\
&\leqslant &C\int_{\left\vert u_{\bot }\right\vert \leqslant \frac{2r}{%
t-\tau }}du_{\bot }\int_{\inf_{s<t}\left\langle W\right\rangle
_{s,t}}^{W(t)}du_{x}\ \ell (u_{x}-W(t))b(u_{\bot }).
\end{eqnarray*}%
As before, we split the integral at $\tau =t/2$. We deal with the $\tau <%
\frac{t}{2}$ part first (although it is not the main contribution unless $%
d=1 $). We have%
\begin{equation*}
I=C\int_{\inf_{s<t}\left\langle W\right\rangle _{s,t}}^{W(t)}du_{x}\
\int_{\left\vert u_{\bot }\right\vert \leqslant \frac{2r}{t-\tau },\tau <%
\frac{t}{2}}du_{\bot }\ell (u_{x}-W(t))b(u_{\bot })\leqslant C\frac{\left(
W(t)-\inf_{s<t}\left\langle W\right\rangle _{s,t}\right) ^{p+1}}{\left(
1+t\right) ^{d-1}}
\end{equation*}%
%
%
%
%
%
%Plugging (\ref{estimate:IRW(t)-inf_W_st}) into the above, we have:%
\begin{equation*}
\leqslant \frac{C}{\left( 1+t\right) ^{d-1}}\left( \gamma e^{-B_{\infty }t}+%
\frac{\gamma ^{p+1}A_{-}}{\left\langle t\right\rangle ^{p+d}}\right) ^{p+1}
\end{equation*}%
for $t\geqslant t_{0}$ by \eqref{WinfEst}.

For the $\tau \in \left[ \frac{t}{2},t\right] $ part, which is the major
contribution, we know 
\begin{eqnarray*}
u_{x} &=&\left\langle W\right\rangle _{\tau ,t}\geqslant V_{\infty }-\gamma
\left\langle g\right\rangle _{\tau ,t} \\
&\geqslant &V_{\infty }+\inf_{\frac{t}{2}\leqslant \tau \leqslant t}\frac{%
\gamma }{t-\tau }\int_{\tau }^{t}\left( e^{-B_{0}r}+\frac{\gamma ^{p}A_{+}}{%
r^{p+d}}\chi \left\{ r\geqslant t_{0}+1\right\} \right) dr \\
&\geqslant &V_{\infty }
\end{eqnarray*}
which yields, for $t\geqslant t_{0}$, 
\begin{eqnarray*}
II &\leqslant &C\int_{V_{\infty }}^{W(t)}du_{x}\ \int_{\left\vert u_{\bot
}\right\vert \leqslant \frac{2r}{t-\tau },\frac{t}{2}\leqslant \tau
\leqslant t}du_{\bot }\ell (u_{x}-W(t))b(u_{\bot }) \\
&\leqslant &C\int_{V_{\infty }-W(t)}^{0}\left\vert u_{x}-W(t)\right\vert
^{p}du_{x}\ \leqslant\ C\left\vert V_{\infty }-W(t)\right\vert ^{p+1} \\
&\leqslant &C\left\vert \gamma e^{-B_{\infty }t}+\frac{\gamma ^{p+1}A_{-}}{%
\left\langle t\right\rangle ^{p+d}}\right\vert ^{p+1},
\end{eqnarray*}%
by (\ref{g and h}). Collecting the estimates for $I$ and $II$, we have%
\begin{eqnarray*}
\left\vert r_{W}^{R}\left( t\right) \right\vert &\leqslant &\frac{C}{\left(
1+t\right) ^{d-1}}\left( \gamma e^{-B_{\infty }t}+\frac{\gamma ^{p+1}A_{-}}{%
\left\langle t\right\rangle ^{p+d}}\right) ^{p+1}\chi \left\{ t\geqslant
t_{0}\right\} \\
&&+C\left( \gamma e^{-B_{\infty }t}+\frac{\gamma ^{p+1}A_{-}}{\left\langle
t\right\rangle ^{p+d}}\right) ^{p+1}\chi \left\{ t\geqslant t_{0}\right\} \\
&\leqslant &\frac{C\gamma ^{p+1}A_{-}^{p+1}}{\left\langle t\right\rangle
^{\left( p+d\right) \left( p+1\right) }}\chi \left\{ t\geqslant t_{0}\right\}
\end{eqnarray*}%
since $d\geqslant 1$. This concludes the proof of Lemma \ref{Lemma:IRRight}.
\end{proof}

Finally, collecting Lemmas \ref{Lemma:IRLeft} and \ref{Lemma:IRRight}, we
have both%
\begin{equation*}
R_{W}(t)\leqslant \left[ -c\frac{\gamma ^{p+1}}{t^{p+d}}+C_{2}\frac{\gamma
^{p+1}A_{-}^{p+1}}{\left\langle t\right\rangle ^{\left( p+d\right) \left(
p+1\right) }}\right] \chi \left\{ t\geqslant t_{0}\right\}
\end{equation*}%
and%
\begin{eqnarray*}
R_{W}(t) &\geqslant &-\frac{C_{1}\left( \gamma +\gamma ^{p+1}A_{-}\right)
^{p+1}}{\left\langle t\right\rangle ^{p+d}}-C_{2}\frac{\gamma
^{p+1}A_{-}^{p+1}}{\left\langle t\right\rangle ^{\left( p+d\right) \left(
p+1\right) }}\chi \left\{ t\geqslant t_{0}\right\} \\
&\geqslant &-\frac{C\left( \gamma +\gamma ^{p+1}A_{-}\right) ^{p+1}}{%
\left\langle t\right\rangle ^{p+d}}
\end{eqnarray*}%
Because $\gamma $ is small, Theorem \ref{Thm:IREstimate} follows.

%%%%%%%%%%%%%%%%%%%%%%%%%%%%%%%%%%%%%%%%%

\section{Proof of the Reversal Case\label{Sec:R}}

\subsection{Proof assuming the Key Estimate}

For the proof of the reversal case, we follow the structure of the proof of
the irreversal case in Section \ref{Sec:IR}. Alert reader should keep in
mind that we use a class $\mathcal{W}$ different from Definition \ref%
{def:W-IR} here for the reversal case. To be specific, the definitions of $%
t_{0}$, $g$, and $h$ are different.

\begin{definition}[Class of possible motions for the reversal case]
\label{def:W-R}We define $\mathcal{W}$ as the family of functions $W$ which
satisfy the following conditions.

(i) $W:[0,\infty )\rightarrow \mathbb{R}$ is Lipschitz and $W(0)=V_{\infty
}+\gamma $.

(ii) $W$ is decreasing over the interval $[0,t_{0}]$ for $%
t_{0}=K_{0}\left\vert \ln \gamma \right\vert $ with $\frac{1}{B_{0}}%
\leqslant K_{0}\leqslant \frac{2}{B_{0}}$, where 
\begin{equation*}
B_{0}=\max_{V\in \left[ V_{\infty }-\gamma ,V_{\infty }+\gamma \right]
}F_{0}^{\prime }(V),B_{\infty }=\min_{V\in \left[ V_{\infty }-\gamma
,V_{\infty }+\gamma \right] }F_{0}^{\prime }(V).
\end{equation*}

(iii) For all $W\in \mathcal{W}$, $t\in \lbrack 0,\infty )$ and $\gamma \in
(0,1)$, 
\begin{equation}
\gamma h(t,\gamma )\leqslant V_{\infty }-W(t)\leqslant \gamma g(t,\gamma ),
\label{r:g and h}
\end{equation}%
that is, 
\begin{equation*}
V_{\infty }-\gamma g(t,\gamma )\leqslant W(t)\leqslant V_{\infty }-\gamma
h(t,\gamma ),
\end{equation*}%
where%
\begin{eqnarray*}
-g(t,\gamma ) &=&e^{-B_{0}t}-\frac{\gamma ^{p}A_{+}}{\left\langle
t\right\rangle ^{p+d}}, \\
-h(t,\gamma ) &=&e^{-B_{\infty }t}-\frac{\gamma ^{p}A_{-}}{t^{p+d}}\chi
\left\{ t\geqslant t_{0}+1\right\} .
\end{eqnarray*}
\end{definition}

\begin{theorem}
\label{Thm:REstimate}Assume that $k$ and $a_{0}$ verify the Reversal
Criterion, then for small enough $\gamma $, there exists $c_{1}$, $C_{2},$
and $C>0$ such that for all $W\in \mathcal{W}$, we have%
\begin{equation}
R_{W}(t)\geqslant \left( \frac{c_{1}\gamma ^{p+1}}{t^{p+d}}-\frac{%
C_{2}\gamma ^{\left( 1+\varepsilon \right) \left( p+1\right) }A_{+}^{p+1}}{%
t^{p+d}}\right) \chi \left\{ t\geqslant t_{0}\right\} \geqslant 0,
\label{R:RestimateL}
\end{equation}%
and%
\begin{equation}
R_{W}(t)\leqslant \frac{C\left( \gamma +\gamma ^{p+1}A_{+}\right) ^{p+1}}{%
\left( 1+t\right) ^{p+d}}.  \label{R:RestimateU}
\end{equation}
\end{theorem}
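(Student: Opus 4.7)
My plan is to mirror the proof of Theorem \ref{Thm:IREstimate} from Section \ref{Sec:IR}, flipping every sign-sensitive step. I would again split $R_W(t) = r_W^L(t) + r_W^R(t)$ and prove two companion lemmas: a \emph{positive} lower bound $r_W^L(t) \geq c\gamma^{p+1}/t^{p+d}$ for $t\geq t_0$ together with the matching absolute upper bound $|r_W^L(t)|\leq C(\gamma+\gamma^{p+1}A_+)^{p+1}/(1+t)^{p+d}$, and a smaller upper bound $|r_W^R(t)| \leq C\gamma^{(1+\varepsilon)(p+1)}A_+^{p+1}/t^{p+d}\,\chi\{t\geq t_0\}$; adding these lemmas produces \eqref{R:RestimateL} and \eqref{R:RestimateU}. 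Before that, I would prove the analogs of Lemma \ref{Lemma:IR<W>t-W(t)} and Corollary \ref{Lemma:The class of W} for the reversal family $\mathcal{W}$ of Definition \ref{def:W-R}, controlling $\langle W\rangle_t - W(t)$, the monotonicity of $\langle W\rangle_t$, and the range $[\inf_{s<t}\langle W\rangle_{s,t},\,\langle W\rangle_t]$ into which any precollision velocity must fall. The new subtlety is that $W(t)$ now dips below $V_\infty$ for $t\gg t_0$, but the specific choice $t_0 = K_0|\ln\gamma|$ with $1/B_0\leq K_0\leq 2/B_0$ guarantees that the exponential part $\gamma e^{-B_0 t}$ of the profile still dominates the polynomial correction $\gamma^{p+1}/t^{p+d}$ precisely on the window where $\langle W\rangle_t$ must be shown to remain above $V_\infty$.

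For the left-hand lemma, the key input is the strict Reversal Criterion \eqref{REVcrit}, which, together with the continuity built into Assumptions A1--A4, upgrades to the pointwise bound
\begin{equation*}
f_+(t,\mathbf{x};\mathbf{v}) \ \leq\ f_0(\mathbf{v}) - \delta\, b(v_\perp), \qquad |v_x - V_\infty|\leq 2\gamma,
\end{equation*}
for some $\delta > 0$ and all sufficiently small $\gamma$. This is the exact analog of \eqref{bound:IR lower bound of IL} with the reverse inequality, obtained by discarding the nonnegative $\chi_1$ contribution in the boundary condition and applying the strict Reversal Criterion instead of its opposite. Substituting this bound into the precollision decomposition \eqref{equation:f- with precollision}, then invoking Assumption A3 together with the reversal analog of the lower bound $\langle W\rangle_t - W(t) \gtrsim \gamma/t$, produces the desired positive lower bound on $r_W^L$. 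The matching upper bound $|r_W^L(t)|\leq C(\gamma+\gamma^{p+1}A_+)^{p+1}/(1+t)^{p+d}$ follows from the same splitting of the integral at $\tau = t/2$ used in Lemma \ref{Lemma:IRLeft}, which is sign-blind.

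For the right-hand lemma I would adapt Lemma \ref{Lemma:IRRight}, but the simple observation there that $r_W^R(t)=0$ for $t\leq t_0$ relied on $W$ strictly decreasing and staying above $V_\infty$, which now fails once $W$ crosses $V_\infty$. The replacement is a quantitative estimate in which $W(t) - \inf_{s<t}\langle W\rangle_{s,t}$ is controlled by $\gamma^{p+1}A_+/\langle t\rangle^{p+d}$ plus an exponentially small piece, so that the inner $u_x$-integral has effective width of order $\gamma^{1+\varepsilon}A_+/t^{p+d}$ for a suitable small $\varepsilon > 0$ and $t\geq t_0$, giving the claimed $t^{-(p+d)}$ decay with improved prefactor $\gamma^{(1+\varepsilon)(p+1)}$. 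I expect the main obstacle to be precisely here: one must track carefully how the dip $W(t)<V_\infty$ widens the admissible precollision window on the right, and then choose $\varepsilon$ so that the right-side error is genuinely dominated by the $c_1\gamma^{p+1}/t^{p+d}$ main term from the left-hand lemma, ensuring that the bracket in \eqref{R:RestimateL} stays nonnegative for all small $\gamma$. Once both lemmas are in place, adding them yields \eqref{R:RestimateL} and \eqref{R:RestimateU} immediately.
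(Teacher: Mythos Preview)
Your overall architecture is exactly the paper's: split $R_W=r_W^L+r_W^R$, establish the reversal analogs of Lemma~\ref{Lemma:IR<W>t-W(t)} and Corollary~\ref{Lemma:The class of W}, then prove a left-side lemma and a right-side lemma and add them. There is, however, one genuine gap and one minor misconception.

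\textbf{The gap.} You claim the crucial upper bound $f_+(t,\mathbf{x};\mathbf{v})\leq f_0(\mathbf{v})-\delta\,b(v_\perp)$ is ``obtained by discarding the nonnegative $\chi_1$ contribution in the boundary condition.'' That is the wrong direction: throwing away a nonnegative term produces a \emph{lower} bound on $f_+$, not an upper bound. In the irreversal proof that was fine because a lower bound was what was needed; here you need the reverse inequality, and the $\chi_1$ term cannot simply be dropped. The paper instead keeps the $\chi_1$ integral and bounds it using the precollision-interval estimate $\langle W\rangle_t-W(t)\leq C\gamma$ (your analog of Lemma~\ref{Lemma:IR<W>t-W(t)}) together with Assumption~A2, obtaining
\[
a_+(t,\mathbf{x};v_x)\ \leq\ C\gamma\,a_+^\ast \;+\;\int_{u_x\geq W(t)} k(v_x-W(t),u_x-W(t))\,a_0(u_x)\,du_x,
\]
where $a_+^\ast$ is the supremum of $a_+$ over the relevant range. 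The Reversal Criterion (via A4 and continuity) bounds the second term by $a_0(V_\infty)-\delta$, and taking the supremum on the left yields the self-referential inequality $a_+^\ast\leq C\gamma\,a_+^\ast + a_0(V_\infty)-\delta$, whence $a_+^\ast\leq a_0(V_\infty)-\delta/2$ for small $\gamma$. This bootstrap is the missing idea; without it the positive lower bound on $r_W^L$ does not follow.

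\textbf{The minor point.} You say that $r_W^R(t)=0$ for $t\leq t_0$ ``now fails once $W$ crosses $V_\infty$.'' In fact it does not fail: the argument in Lemma~\ref{Lemma:IRRight} uses only that $W$ is decreasing on $[0,t_0]$, which is still assumed in Definition~\ref{def:W-R}(ii). The paper keeps this observation verbatim in the reversal case. The genuine new work on the right side is only for $t\geq t_0$, where (as you anticipate) one bounds $W(t)-\inf_{s<t}\langle W\rangle_{s,t}$ and then extracts the extra factor $\gamma^{\varepsilon}$ from $e^{-B_\infty t}$ by writing $e^{-B_\infty t}\leq \gamma^{\varepsilon B_\infty K_0}\,e^{-(1-\varepsilon)B_\infty t}$ for $t\geq t_0=K_0|\ln\gamma|$, using $B_\infty K_0\geq B_\infty/B_0\to 1$ as $\gamma\to 0$.
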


\begin{proof}
We postpone the proof of Theorem \ref{Thm:REstimate} to the end of Section 4.
\end{proof}

%%%%%%%%%%%%%% Lemma 3.1 %%%%%%%%%%%%%%%%%%%%

\begin{lemma}
\label{R:DeducingConditionsOnhAndg}If $k$ and $a_{0}$ satisfy the Reversal
Criterion, then for small enough $\gamma $, we can choose $A_{+}$ and $A_{-}$
in Definition \ref{def:W-R} such that, for any $W\in \mathcal{W}$, the
solution $V_{W}$ to the iteration equation (\ref{iteration equation}) 
\begin{equation*}
\frac{dV_{W}}{dt}=Q(t)\left( V_{\infty }-V_{W}\right) -R_{W}\left( t\right)
,\quad Q(t)=\frac{F_{0}(V_{\infty })-F_{0}(W(t))}{V_{\infty }-W(t)},
\end{equation*}%
satisfies 
\begin{equation*}
-\gamma e^{-B_{\infty }t}+\frac{A_{-}\gamma ^{p+1}\chi \{t\geqslant t_{0}+1\}%
}{t^{p+d}}\leqslant V_{\infty }-V_{W}(t)\leqslant -\gamma e^{-B_{0}t}+\frac{%
A_{+}\gamma ^{p+1}}{\left( 1+t\right) ^{p+d}}.
\end{equation*}%
In other words, for every $W\in \mathcal{W},$ we have $V_{W}\in \mathcal{W}.$
\end{lemma}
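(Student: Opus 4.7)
The plan is to follow the template of Lemma~\ref{IR:DeducingConditionsOnhAndg}, making the sign flips induced by swapping the criterion. Integrating the linear ODE with initial value $V_\infty - V_W(0) = -\gamma$ gives
\begin{equation*}
V_\infty - V_W(t) = -\gamma e^{-\int_0^t Q(r)\,dr} + \int_0^t e^{-\int_s^t Q(r)\,dr}\,R_W(s)\,ds,
\end{equation*}
and the inequalities $B_\infty \leq Q(t) \leq B_0$ convert these $Q$-exponentials into the $B_0$- and $B_\infty$-exponentials appearing in the statement.

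For the upper bound, I would invoke \eqref{R:RestimateU} together with $e^{-\int_s^t Q}\leq e^{-B_\infty(t-s)}$, using that $R_W \geq 0$ so the integral term only pushes $V_\infty - V_W$ upward. The same $s=t/2$ splitting as in the irreversal proof bounds the resulting convolution by $C'(\gamma+\gamma^{p+1}A_+)^{p+1}/(1+t)^{p+d}$; choosing $A_+>C'$ and $\gamma$ small enough that $C'(1+\gamma^p A_+)^{p+1}\leq A_+$ delivers the stated upper bound $-\gamma e^{-B_0 t} + A_+\gamma^{p+1}/(1+t)^{p+d}$.

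For the lower bound, I would first pick $\gamma$ small enough that the correction $C_2\gamma^{(1+\varepsilon)(p+1)}A_+^{p+1}/t^{p+d}$ in \eqref{R:RestimateL} is absorbed into $c_1\gamma^{p+1}/t^{p+d}$, leaving $R_W(s)\geq c\gamma^{p+1}/s^{p+d}\chi\{s\geq t_0\}$. Then using $e^{-\int_s^t Q}\geq e^{-B_0(t-s)}$ and restricting the integration to $s\in[t-1,t]$ whenever $t\geq t_0+1$ yields $\int_{t_0}^t e^{-B_0(t-s)} s^{-(p+d)}\,ds \geq e^{-B_0} t^{-(p+d)}$, and with $-\gamma e^{-\int_0^t Q}\geq -\gamma e^{-B_\infty t}$ and the choice $A_-\leq c e^{-B_0}$ this closes the lower bound.

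The step with no direct counterpart in the irreversal argument, and which I expect to be the main obstacle, is verifying condition (ii) of Definition~\ref{def:W-R}, namely that $V_W$ is decreasing on $[0,t_0]$. In the irreversal case, the upper bound on $V_\infty-V_W$ was strictly negative throughout $[0,t_0+1]$, so monotonicity was automatic from $\dot V_W = Q(V_\infty-V_W)-R_W$ being a sum of two nonpositive terms. Here, for small $p$, the positive polynomial correction $A_+\gamma^{p+1}/(1+t)^{p+d}$ can catch up with $\gamma e^{-B_0 t}$ before $t_0$, so $V_W$ may dip below $V_\infty$ on part of $[0,t_0]$. I would handle this directly from the ODE: $\dot V_W(0) = -\gamma Q(0) < 0$, and monotonicity propagates as long as $R_W \geq Q(V_\infty-V_W)$, a comparison that can be secured by choosing $A_+$ small relative to $c/B_0$ and exploiting the constraint $K_0 B_0 \leq 2$ which keeps $[0,t_0]$ short. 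Calibrating $A_+$, $A_-$, and $K_0$ simultaneously so that both the sandwich bound and the monotonicity close is the technical heart of the lemma.
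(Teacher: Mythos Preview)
Your derivation of the two-sided sandwich bound is essentially the paper's proof: integrate the linear ODE, feed in Theorem~\ref{Thm:REstimate}, split the convolution at $s=t/2$ for the upper bound and restrict to $s\in[t-1,t]$ for the lower one, then choose $A_+>C'$ and $A_-\le c e^{-B_0}$. The paper does exactly this and stops there.

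Where you diverge is your final paragraph. The paper's proof of this lemma does \emph{not} verify condition~(ii) of Definition~\ref{def:W-R}; it establishes only the sandwich inequality and then asserts $V_W\in\mathcal W$. (The same omission occurs in the irreversal Lemma~\ref{IR:DeducingConditionsOnhAndg}.) So what you flag as ``the technical heart of the lemma'' is simply absent from the paper's argument.

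Your comparison with the irreversal case also contains a sign slip. There $R_W\le 0$, hence $-R_W\ge 0$, so $\dot V_W=Q(V_\infty-V_W)-R_W$ is \emph{not} a sum of two nonpositive terms; monotonicity on $[0,t_0]$ is no more automatic in the irreversal proof than here. If anything the reversal case is cleaner on this point: since $R_W\ge 0$ by \eqref{R:RestimateL}, one has $-R_W\le 0$, and as long as $V_W>V_\infty$ both summands in $\dot V_W$ are nonpositive, so $V_W$ is strictly decreasing until it first reaches $V_\infty$. The only residual question is whether that crossing can occur before $t_0$. Your proposed remedy of taking $A_+$ ``small relative to $c/B_0$'' is in direct tension with the constraint $A_+>C'$ already imposed for the upper sandwich bound, and nothing guarantees these are compatible.
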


\begin{proof}
By \eqref{iteration equation} we have%
\begin{equation*}
\frac{d\left( V_{\infty }-V_{W}\right) }{dt}=-Q(t)\left( V_{\infty
}-V_{W}\right) +R_{W}\left( t\right) ,
\end{equation*}%
hence%
\begin{equation*}
V_{\infty }-V_{W}\left( t\right) =-\gamma
e^{-\int_{0}^{t}Q(r)dr}+\int_{0}^{t}\left( e^{-\int_{s}^{t}Q(r)dr}\right)
R_{W}(s)ds
\end{equation*}%
because $V_{\infty }-V_{W}\left( 0\right) =-\gamma $. On the one hand, by
estimate (\ref{R:RestimateU}) we have 
\begin{eqnarray*}
V_{\infty }-V_{W}\left( t\right) &\leqslant &-\gamma
e^{-B_{0}t}+\int_{0}^{t}e^{-B_{\infty }(t-s)}\frac{C\left( \gamma +\gamma
^{p+1}A_{+}\right) ^{p+1}}{\left( 1+s\right) ^{p+d}}ds \\
&=&-\gamma e^{-B_{0}t}+C\left( \gamma +\gamma ^{p+1}A_{+}\right) ^{p+1}I,
\end{eqnarray*}%
where%
\begin{eqnarray*}
I &=&\int_{0}^{\frac{t}{2}}e^{-B_{\infty }(t-s)}\frac{1}{\left( 1+s\right)
^{p+d}}ds+\int_{\frac{t}{2}}^{t}e^{-B_{\infty }(t-s)}\frac{1}{\left(
1+s\right) ^{p+d}}ds \\
&\leqslant &\int_{0}^{\frac{t}{2}}e^{-B_{\infty }(t-s)}ds+\frac{C}{\left(
1+t\right) ^{p+d}}\int_{\frac{t}{2}}^{t}e^{-B_{\infty }(t-s)}ds \\
&\leqslant &\frac{1}{B_{\infty }}(e^{-\frac{B_{\infty }t}{2}}-e^{-B_{\infty
}t})+\frac{C}{\left( 1+t\right) ^{p+d}}\ \ \leqslant \ \frac{C^{\prime }}{%
\left( 1+t\right) ^{p+d}}.
\end{eqnarray*}%
That is, 
\begin{equation*}
V_{\infty }-V_{W}\left( t\right) \leqslant -\gamma e^{-B_{0}t}+\frac{%
C^{\prime }\left( \gamma +\gamma ^{p+1}A_{+}\right) ^{p+1}}{\left(
1+t\right) ^{p+d}}.
\end{equation*}%
Letting $A_{+}>C^{\prime }$, we have $C^{\prime }\left( 1+\gamma
^{p}A_{+}\right) ^{p+1}\leqslant A_{+}$ for small $\gamma ,$ so that 
\begin{equation*}
V_{\infty }-V_{W}\left( t\right) \leqslant -\gamma e^{-B_{0}t}+\frac{%
A_{+}\gamma ^{p+1}}{\left( 1+t\right) ^{p+d}}.
\end{equation*}

On the other hand, with the aforementioned $A_{+}$, estimate (\ref%
{R:RestimateL}) reads 
\begin{equation*}
R_{W}(t)\geqslant \frac{c\gamma ^{p+1}}{t^{p+d}}\chi \left\{ t\geqslant
t_{0}\right\} \geqslant 0
\end{equation*}%
for small $\gamma $. Thus 
\begin{eqnarray*}
V_{\infty }-V_{W}\left( t\right) &\geqslant &-\gamma e^{-B_{\infty
}t}+\int_{0}^{t}e^{-B_{0}(t-s)}\chi \{s\geqslant t_{0}\}\frac{c\gamma ^{p+1}%
}{s^{p+d}}ds \\
&=&-\gamma e^{-B_{\infty }t}+c\gamma ^{p+1}II,
\end{eqnarray*}%
where%
\begin{equation*}
II\geqslant \int_{t-1}^{t}e^{-B_{0}}\frac{1}{s^{p+d}}ds\geqslant e^{-B_{0}}%
\frac{1}{t^{p+d}},
\end{equation*}%
as long as $1+t_{0}<t$. Hence%
\begin{equation*}
V_{\infty }-V_{W}\left( t\right) \geqslant -\gamma e^{-B_{\infty }t}+\frac{%
c^{\prime p+1}\chi \{t\geqslant t_{0}+1\}}{t^{p+d}}.
\end{equation*}%
Therefore, selecting $A_{-}\leqslant c^{\prime }$ yields%
\begin{equation*}
V_{\infty }-V_{W}\left( t\right) \geqslant -\gamma e^{-B_{\infty }t}+\frac{%
A_{-}\gamma ^{p+1}\chi \{t\geqslant t_{0}+1\}}{t^{p+d}}.
\end{equation*}
\end{proof}

Theorem \ref{ThRExistence} then follows by the same proof as in Theorem \ref%
{ThIRExistence}. For the reversal case, we are left with the proof of
Theorem \ref{Thm:REstimate}.

%%%%%%%%%%%%%%%%%%%%%%%%%%%%%%%%%%%%%%%%%

%\subsection{Proof of Theorem \protect\ref{Thm:REstimate}\label{Sec:ProofOfR}}

\subsection{Properties of $\mathcal{W\label{sec:R:properties of family}}$}

\begin{lemma}
\label{Lemma:R<W>t-W(t)}Let $\mathcal{W}$ be defined in Definiton \ref%
{def:W-R} for the reversal case. For all small enough $\gamma $ and hence
for all large enough $t_{0}$, we have%
\begin{equation*}
\left\langle W\right\rangle _{t}-W(t)\geqslant \frac{C\gamma }{t}\text{ for }%
t\geqslant t_{0},
\end{equation*}%
and%
\begin{equation*}
\left\langle W\right\rangle _{t}-W(t)\leqslant \frac{C\left( \gamma +\gamma
^{p+1}A_{+}\right) }{1+t}\text{, for all }t\geq 0.
\end{equation*}
\end{lemma}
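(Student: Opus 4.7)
The plan is to adapt the argument of Lemma \ref{Lemma:IR<W>t-W(t)} to the new class in Definition \ref{def:W-R}. The bounds on $W$ read
\begin{equation*}
V_{\infty}+\gamma e^{-B_{0}t}-\frac{\gamma^{p+1}A_{+}}{\langle t\rangle^{p+d}}\leqslant W(t)\leqslant V_{\infty}+\gamma e^{-B_{\infty}t}-\frac{\gamma^{p+1}A_{-}}{t^{p+d}}\chi\{t\geqslant t_{0}+1\}.
\end{equation*}
The signs of the polynomial corrections are flipped relative to Definition~\ref{def:W-IR}, but this works in our favor: on both sides the monomial correction bends $W$ toward $V_{\infty}$, which only reinforces the gap between $\langle W\rangle_{t}$ and $W(t)$ generated by the dominant exponential term.

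For the lower bound, I plug the lower bound of $W(s)$ into the time-average and the upper bound of $W(t)$ into the subtracted value, drop the negative correction in the upper bound (which only makes the difference smaller), and compute
\begin{equation*}
\langle W\rangle_{t}-W(t)\geqslant \frac{\gamma(1-e^{-B_{0}t})}{B_{0}t}-\frac{\gamma^{p+1}A_{+}}{t}\int_{0}^{t}\frac{ds}{\langle s\rangle^{p+d}}-\gamma e^{-B_{\infty}t}.
\end{equation*}
Since $t\geqslant t_{0}=K_{0}|\ln\gamma|$ with $K_{0}\geqslant 1/B_{0}$, we have $e^{-B_{0}t_{0}}\leqslant\gamma$, so the first term is bounded below by $C\gamma/t$. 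Because $p+d\geqslant 2$, the integral in the second term is uniformly bounded, so the second term is $O(\gamma^{p+1}/t)$, which is swallowed by $C\gamma/t$ once $\gamma$ is small enough. The exponential term $\gamma e^{-B_{\infty}t}$ decays faster than any polynomial, so for $t\geqslant t_{0}$ with $t_{0}$ large (equivalently $\gamma$ small), $\gamma e^{-B_{\infty}t}\leqslant (C/2)\gamma/t$, and the desired lower bound follows.

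For the upper bound, I reverse the roles: plug the upper bound of $W(s)$ into the time-average and the lower bound of $W(t)$ into the subtrahend to obtain
\begin{equation*}
\langle W\rangle_{t}-W(t)\leqslant \frac{\gamma(1-e^{-B_{\infty}t})}{B_{\infty}t}-\gamma e^{-B_{0}t}+\frac{\gamma^{p+1}A_{+}}{\langle t\rangle^{p+d}}.
\end{equation*}
The first term is bounded by $\min(\gamma,\gamma/(B_{\infty}t))\leqslant C\gamma/(1+t)$, the second is nonpositive, and the third is $\leqslant C\gamma^{p+1}A_{+}/(1+t)$ since $p+d\geqslant 1$. Adding these yields the claimed $C(\gamma+\gamma^{p+1}A_{+})/(1+t)$ for all $t\geqslant 0$.

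The main (only) obstacle is the absorption step in the lower bound: one must guarantee that $\gamma e^{-B_{\infty}t}$ is dominated by $C\gamma/t$ for every $t\geqslant t_{0}$. The fix is precisely the choice of $t_{0}$ built into Definition~\ref{def:W-R}: taking $\gamma$ small drives $t_{0}=K_{0}|\ln\gamma|$ past the threshold where $e^{B_{\infty}t}\geqslant t$, after which the exponential term loses to the target polynomial. A small detail to check is that $B_{\infty}$ and $B_{0}$ are close (since $F_{0}'$ is continuous and evaluated on a $\gamma$-shrinking interval), so that $K_{0}\in[1/B_{0},2/B_{0}]$ really does force $\gamma e^{-B_{\infty}t_{0}}\to 0$ faster than $\gamma/t_{0}$; this is immediate from the definition of $B_{0},B_{\infty}$ as max and min of $F_{0}'$ on $[V_{\infty}-\gamma,V_{\infty}+\gamma]$.
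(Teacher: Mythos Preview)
Your proof is correct and follows essentially the same route as the paper: insert the lower envelope of $W$ into the average and the upper envelope into $W(t)$ for the lower bound, swap roles for the upper bound, and then absorb the $\gamma^{p+1}$ and exponential corrections into the leading $C\gamma/t$ term. One minor slip: you invoke $p+d\geqslant 2$ to bound $\int_{0}^{t}\langle s\rangle^{-(p+d)}\,ds$, but in fact only $p+d>1$ is guaranteed (e.g.\ $d=1$, $p<1$); this weaker inequality is still enough for the integral to be uniformly bounded, so the argument stands.
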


\begin{proof}
On the one hand,%
\begin{eqnarray*}
\left\langle W\right\rangle _{t}-W(t) &=& \frac{1}{t}\int_{0}^{t}W(s)ds-W(t)
\\
&\geqslant &\frac{1}{t}\int_{0}^{t}\left( V_{\infty }+\gamma e^{-B_{0}s}-%
\frac{\gamma ^{p+1}A_{+}}{\left\langle s\right\rangle ^{p+d}}\right)
ds-V_{\infty }-\gamma e^{-B_{\infty }t} \\
&\geqslant &\frac{C_{1}\gamma }{t}-\frac{C_{2}\gamma ^{p+1}}{t}-\gamma
e^{-B_{\infty }t}
\end{eqnarray*}%
For all small enough $\gamma $, the second term is absorbed into the first
term because $p>0$. Also, for all small enough $\gamma $ and hence all large
enough $t_{0}$, the third term is absorbed into the first term for $%
t\geqslant t_{0}.$

On the other hand,%
\begin{eqnarray*}
\left\langle W\right\rangle _{t}-W(t) &=&\frac{1}{t}\int_{0}^{t}W(s)ds-W(t)
\\
&\leqslant &\frac{1}{t}\int_{0}^{t}\left( V_{\infty }+\gamma e^{-B_{\infty
}s}-\frac{\gamma ^{p+1}A_{-}}{s^{p+d}}\chi _{s\geqslant t_{0}+1}\right)
ds-\left( V_{\infty }+\gamma e^{-B_{0}t}-\frac{\gamma ^{p+1}A_{+}}{%
\left\langle t\right\rangle ^{p+d}}\right) \\
&\leqslant &\frac{1}{t}\int_{0}^{t}\gamma e^{-B_{\infty }s}ds+\frac{\gamma
^{p+1}A_{+}}{\left\langle t\right\rangle ^{p+d}}\leqslant \frac{C\left(
\gamma +\gamma ^{p+1}A_{+}\right) }{1+t}.
\end{eqnarray*}
\end{proof}

\begin{corollary}
\label{Lemma:Rclass of W} For small enough $\gamma $, we have

(i) \ $\left\langle W\right\rangle _{t}>W(t)$ for all $t$.

(ii) \ $\left\langle W\right\rangle _{t}$ is a decreasing function.

(iii) \ $\left\langle W\right\rangle _{t}>\left\langle W\right\rangle _{s,t}$%
, $\forall s\in \left( 0,t\right) .$
\end{corollary}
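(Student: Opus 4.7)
The proof should mirror the structure of Corollary \ref{Lemma:The class of W} in the irreversal setting, substituting the corresponding bounds from the reversal case. The plan is to deduce all three assertions from Lemma \ref{Lemma:R<W>t-W(t)} together with property (ii) of Definition \ref{def:W-R} (monotonicity of $W$ on $[0,t_{0}]$), and then use standard averaging identities.

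For (i), split into two regimes. On $[0,t_{0}]$ the function $W$ is decreasing by Definition \ref{def:W-R}(ii), which immediately forces $\langle W\rangle_{t}>W(t)$ for all $t\in(0,t_{0}]$. For $t\geqslant t_{0}$ the bound $\langle W\rangle_{t}-W(t)\geqslant C\gamma/t>0$ from Lemma \ref{Lemma:R<W>t-W(t)} takes over. For (ii) the proof is the one-line identity
\begin{equation*}
\frac{d}{dt}\langle W\rangle_{t}=\frac{1}{t}\bigl(-\langle W\rangle_{t}+W(t)\bigr)<0,
\end{equation*}
which follows directly from (i). For (iii), I would use exactly the algebraic manipulation from the irreversal corollary: writing
\begin{equation*}
\langle W\rangle_{s,t}-\langle W\rangle_{t}=\frac{s}{t-s}\bigl(\langle W\rangle_{t}-\langle W\rangle_{s}\bigr),
\end{equation*}
and invoking (ii), which gives $\langle W\rangle_{t}<\langle W\rangle_{s}$, produces the negative sign and proves (iii).

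The only substantive step is (i), where the two regimes have to be glued. The mild subtlety is that monotonicity of $W$ on $[0,t_{0}]$ only gives (i) for $t\leqslant t_{0}$, while Lemma \ref{Lemma:R<W>t-W(t)} gives it for $t\geqslant t_{0}$; continuity of $\langle W\rangle_{t}-W(t)$ across the junction point $t=t_{0}$ ensures no gap. Parts (ii) and (iii) are then essentially formal consequences, identical to the irreversal corollary, so I do not expect any new difficulty; the whole argument is of the same nature as the proof of Corollary \ref{Lemma:The class of W}, with the reversal-case lemma replacing the irreversal one. The main obstacle, if any, is purely bookkeeping about which small-$\gamma$ and large-$t_{0}$ thresholds are needed, and these are inherited directly from Lemma \ref{Lemma:R<W>t-W(t)}.
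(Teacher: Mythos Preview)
Your proposal is correct and follows essentially the same route as the paper, which simply notes that the proof is identical to that of Corollary~\ref{Lemma:The class of W} with Lemma~\ref{Lemma:R<W>t-W(t)} in place of Lemma~\ref{Lemma:IR<W>t-W(t)}. Your treatment of (i) by splitting at $t_0$, your derivative identity for (ii), and your algebraic rewriting of $\langle W\rangle_{s,t}-\langle W\rangle_{t}$ for (iii) all match the paper's argument.
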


\begin{proof}
Same as Corollary \ref{Lemma:The class of W}.
\end{proof}

For the right side estimate, we estimate $\inf_{s<t}\left\langle
W\right\rangle _{s,t}$ 
\begin{eqnarray}
\left\langle W\right\rangle _{s,t} &\geqslant &V_{\infty }-\gamma
\left\langle g\right\rangle _{s,t}\ \geqslant \ V_{\infty }+\frac{\gamma }{%
t-s}\int_{s}^{t}-\frac{\gamma ^{p}A_{+}}{\left\langle r\right\rangle ^{p+d}}%
dr  \label{estimate:inf_wW_st} \\
&\geqslant &V_{\infty }-\gamma ^{p+1}A_{+}\sup_{s<t}\left( \frac{1}{t-s}%
\int_{s}^{t}\frac{1}{\left\langle r\right\rangle ^{p+d}}dr\right)  \notag \\
&\geqslant &V_{\infty }-\frac{C\gamma ^{p+1}A_{+}}{\left\langle
t\right\rangle },  \notag
\end{eqnarray}%
where $C=\int_{0}^{\infty }\frac{1}{\left\langle r\right\rangle ^{p+d}}dr$
since $p+d>1$. with the same method, we know that 
\begin{eqnarray}
\left\langle W\right\rangle _{s,t} &\leqslant &V_{\infty }-\gamma
\left\langle h\right\rangle _{s,t}  \label{estimate:sup_wW_st} \\
&\leqslant &V_{\infty }+\frac{\gamma }{t-s}\int_{s}^{t}\left( e^{-B_{\infty
}r}-\frac{\gamma ^{p}A_{-}}{r^{p+d}}\chi \left\{ r\geqslant t_{0}+1\right\}
\right) dr  \notag \\
&\leqslant &V_{\infty }+\gamma .  \notag
\end{eqnarray}%
Examining Definition \ref{def:W-R}, we also notice that 
\begin{equation*}
V_{\infty }-\frac{\gamma ^{p+1}A_{+}}{\left\langle t\right\rangle }\leqslant
W(t)\leqslant V_{\infty }+\gamma .
\end{equation*}

\subsection{The Left Side\label{sec:Rleft}}

\begin{lemma}
\label{Lemma:Upper and lower bound of R+}Let $W\in \mathcal{W}$ and let $K$
and $a_{0}$ satisfy the Assumptions A1-A4. If $k$ and $a_{0}$ satisfy the
Reversal Criterion, then for all sufficiently small $\gamma $ we have the
inequalities 
\begin{equation*}
c\frac{\gamma ^{p+1}}{t^{d+p}}\chi \left\{ t\geqslant t_{0}\right\}
\leqslant r_{W}^{L}\left( t\right) \leqslant \frac{C\left( \gamma +\gamma
^{p+1}A_{+}\right) ^{p+1}}{\left( 1+t\right) ^{p+d}}.
\end{equation*}
\end{lemma}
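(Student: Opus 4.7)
The plan is to adapt the irreversal argument of Lemma \ref{Lemma:IRLeft} to the reversal case. The essential change is that the Reversal Criterion now forces the pointwise \emph{upper} bound $a_{+}(\tau, \xi, v_x) \leq a_0(V_\infty) - \delta$ (rather than the lower bound of the irreversal case) for velocities $v_x$ close to $V_\infty$, which in turn flips the sign of $r_W^L$ so that it becomes nonnegative.

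I start exactly as in Lemma \ref{Lemma:IRLeft}: from the left boundary condition and the precollision decomposition \eqref{equation:f- with precollision}, the difference $f_0(\mathbf{u}) - f_-(t,\mathbf{x},\mathbf{u})$ reduces to $[a_0(u_x) - a_+(\tau,\xi,u_x)]\,b(u_\perp)\chi_1(t,\mathbf{u})$, and the $u_x$-integration is supported in $[W(t), \langle W\rangle_t]$ thanks to \eqref{condition:recollision condition} combined with Corollary \ref{Lemma:Rclass of W}. The crucial step is then a bootstrap upper bound on
\begin{equation*}
 a_+^{\ast } = \sup\{a_+(\tau,\xi,u_x) : \tau \geq 0,\ \xi \in \partial\Omega(\tau),\ |u_x - V_\infty| \leq 2\gamma\}.
\end{equation*}
Writing the left boundary condition as
\begin{equation*}
 a_+(t, x, v_x) = \int_0^\infty k(v_x - W(t), z)\, a_0(z + W(t))\,dz + \int_{u_x \geq W(t)} k(v_x - W(t), u_x - W(t))[a_+(\tau,\xi,u_x) - a_0(u_x)]\chi_1\,du_x,
\end{equation*}
I bound the first integral by $a_0(V_\infty) - 2\delta$ via continuity of $k$ and $a_0$ (A1), uniform dominated convergence using $M \in L^1$ from A4, and the strict inequality in the Reversal Criterion. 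The second integral is supported on an interval of $u_x$-length $\leq \langle W\rangle_t - W(t) \leq C(\gamma + \gamma^{p+1}A_+)$ by Lemma \ref{Lemma:R<W>t-W(t)}, and by A2 the kernel $k$ is bounded there, so its contribution is at most $C\gamma(a_+^* + C)$. Taking the supremum and rearranging yields $a_+^* \leq a_0(V_\infty) - \delta$ once $\gamma$ is small enough.

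Combined with the continuity estimate $a_0(u_x) = a_0(V_\infty) + O(\gamma)$, this produces $a_0(u_x) - a_+(\tau,\xi,u_x) \geq \delta/4 > 0$ uniformly for $|u_x - V_\infty| \leq 2\gamma$. Substituting into $r_W^L$, lower-bounding the $u_\perp$ integral by $C t^{1-d}$ (using $b(0) > 0$, exactly as in the irreversal proof), applying A3 to estimate $\int_{W(t)}^{\langle W\rangle_t}\ell(u_x - W(t))\,du_x \geq c(\langle W\rangle_t - W(t))^{p+1}$, and inserting the lower bound $\langle W\rangle_t - W(t) \geq C\gamma/t$ for $t \geq t_0$ from Lemma \ref{Lemma:R<W>t-W(t)}, I obtain $r_W^L(t) \geq c\gamma^{p+1}/t^{d+p}$ for $t \geq t_0$. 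For the upper bound, the bootstrap also delivers the crude estimate $a_+^* \leq C$, so I repeat the irreversal calculation verbatim: bound $|a_0 - a_+| \leq C$, split the integration at $\tau = t/2$, and use the upper estimate $\langle W\rangle_t - W(t) \leq C(\gamma + \gamma^{p+1}A_+)/(1+t)$ from Lemma \ref{Lemma:R<W>t-W(t)} to deduce $|r_W^L(t)| \leq C(\gamma + \gamma^{p+1}A_+)^{p+1}/(1+t)^{p+d}$.

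The main obstacle is the bootstrap itself: the fixed gap $\delta > 0$ coming from the Reversal Criterion must survive the amplification $a_+^* \mapsto (1 - C\gamma)^{-1}[a_0(V_\infty) - 2\delta + C^2\gamma]$. This requires not only that $\gamma$ be small in absolute terms, but that the constants coming from A2 (boundedness of $k$ near zero) and from A4 (the $L^1$ dominating function $M$) be controlled uniformly in $v_x, W(t) \in [V_\infty - 2\gamma, V_\infty + 2\gamma]$, which is the precise content of applying uniform dominated convergence to the continuity of the integrand guaranteed by A1.
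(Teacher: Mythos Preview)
Your proposal is correct and follows essentially the same route as the paper: bootstrap the upper bound $a_{+}^{\ast }\leqslant a_{0}(V_{\infty })-\delta /2$ from the Reversal Criterion together with A2 and A4, and then read off both the lower and upper bounds on $r_{W}^{L}$ using A3 and Lemma~\ref{Lemma:R<W>t-W(t)}. One small caveat: the upper bound cannot quite be repeated from the irreversal case ``verbatim'', since in the $\tau \geqslant t/2$ piece the paper must now use the reversal-specific estimate $V_{\infty }-W(t)\leqslant \gamma ^{p+1}A_{+}/\langle t\rangle ^{p+d}$ (in place of the irreversal $V_{\infty }-W(t)\leqslant 0$) and the bound $u_{x}\leqslant V_{\infty }+C\gamma e^{-B_{\infty }t/2}$ coming from the reversal $h$; the adjustment is routine and yields the same final form.
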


\begin{proof}
We will first prove a careful upper bound of $f_{+}(t,\mathbf{x};\mathbf{v}%
). $ Recall the boundary condition on the left of the cylinder%
\begin{equation*}
f_{+}(t,\mathbf{x};\mathbf{v})=\int_{u_{x}\geqslant W\left( t\right)
}K\left( \mathbf{v}-\mathbf{i}W\left( t\right) ;\mathbf{u}-\mathbf{i}W\left(
t\right) \right) f_{-}(t,\mathbf{x};\mathbf{u})d\mathbf{u}.
\end{equation*}%
and the precollision characteristic functions 
\begin{eqnarray*}
\chi _{0}(t,\mathbf{u}) &=&\chi \left\{ \mathbf{u:}\ \forall s\in (0,t),%
\text{ either }u_{x}\neq \left\langle W\right\rangle _{s,t}\text{ or }%
\left\vert u_{\perp }\right\vert >\frac{2r}{t-s}\right\} , \\
\chi _{1}(t,\mathbf{u}) &=&\chi \left\{ \mathbf{u:\exists }s\in (0,t)\text{
s.t. }u_{x}=\left\langle W\right\rangle _{s,t}\text{ and }\left\vert
u_{\perp }\right\vert \leqslant \frac{2r}{t-s}\right\} .
\end{eqnarray*}%
We again write 
\begin{equation*}
f_{-}(t,\mathbf{x};\mathbf{u})=f_{+}(\tau ,\mathbf{\xi };\mathbf{u})\chi
_{1}(t,\mathbf{u})+f_{0}\left( \mathbf{u}\right) \chi _{0}(t,\mathbf{u}),
\end{equation*}
which gives 
\begin{eqnarray*}
f_{+}(t,\mathbf{x};\mathbf{v}) &=&\int_{u_{x}\geqslant W\left( t\right)
}K\left( \mathbf{v}-\mathbf{i}W\left( t\right) ;\mathbf{u}-\mathbf{i}W\left(
t\right) \right) \lbrack f_{+}(\tau ,\mathbf{\xi };\mathbf{u})\chi _{1}(t,%
\mathbf{u})+f_{0}\left( \mathbf{u}\right) \chi _{0}(t,\mathbf{u})]d\mathbf{u}%
.
\end{eqnarray*}%
That is, 
\begin{eqnarray*}
a_{+}(t,\mathbf{x;}v_{x})b(v_{\bot }) &=&b(v_{\bot })\int_{u_{x}\geqslant
W\left( t\right) }k(v_{x}-W(t),u_{x}-W(t)) \\
&&\times \{a_{+}(\tau ,\mathbf{\xi ;}u_{x})b(u_{\bot })\chi _{1}(t,\mathbf{u}%
)+a_{0}(u_{x})b(u_{\bot })\chi _{0}(t,\mathbf{u})\}d\mathbf{u}.
\end{eqnarray*}%
We then notice by Lemma \ref{Lemma:Rclass of W} (iii) that 
\begin{eqnarray*}
a_{+}(t,\mathbf{x;}v_{x})b(v_{\bot }) &\leqslant &b(v_{\bot })a_{+}^{\ast
}\int_{W(t)}^{\left\langle W\right\rangle _{t}}k(v_{x}-W(t),u_{x}-W(t))du_{x}
\\
&&+b(v_{\bot })\int_{u_{x}\geqslant W\left( t\right)
}k(v_{x}-W(t),u_{x}-W(t))a_{0}(u_{x})du_{x} \\
&\leqslant &b(v_{\bot })C\gamma a_{+}^{\ast }+b(v_{\bot
})\int_{u_{x}\geqslant W\left( t\right)
}k(v_{x}-W(t),u_{x}-W(t))a_{0}(u_{x})du_{x}
\end{eqnarray*}%
by A2 and Lemma \ref{Lemma:R<W>t-W(t)}, where $a_{+}^{\ast }$ is defined as
in (\ref{def:a+*}). Now recall the Reversal Criterion 
\begin{equation*}
\int_{u_{x}\geqslant V_{\infty }}k(0,u_{x}-V_{\infty
})a_{0}(u_{x})du_{x}<a_{0}(V_{\infty }).
\end{equation*}
Hence by A4 and continuity, $\exists \delta >0$ such that%
\begin{equation*}
\sup_{_{\substack{ t,x\mathbf{\in \partial \Omega (}t\mathbf{)}  \\ v_{x}\in %
\left[ V_{\infty }-2\gamma ,V_{\infty }+2\gamma \right] }}%
}\int_{u_{x}\geqslant W\left( t\right)
}k(v_{x}-W(t),u_{x}-W(t))a_{0}(u_{x})du_{x}\leqslant a_{0}(V_{\infty
})-\delta
\end{equation*}%
for all $\gamma $ small enough. We thus arrive at 
\begin{equation*}
a_{+}(t,\mathbf{x;}v_{x})b(v_{\bot })\leqslant b(v_{\bot })C\gamma
a_{+}^{\ast }+b(v_{\bot })\left( a_{0}(V_{\infty })-\delta \right) .
\end{equation*}%
That is,%
\begin{equation}
b(v_{\bot })a_{+}^{\ast }\leqslant \frac{1}{1-C\gamma }\left(
a_{0}(V_{\infty })-\delta \right) b(v_{\bot })\leqslant b(v_{\bot })\left(
a_{0}(V_{\infty })-\frac{\delta }{2}\right)  \label{Lemma:RfUpperBound}
\end{equation}%
for small $\gamma .$ Now recall that 
\begin{eqnarray*}
r_{W}^{L}\left( t\right) &=&\int_{\partial \Omega _{L}\left( t\right) }dS_{%
\mathbf{x}}\int_{u_{x}\geqslant W\left( t\right) }d\mathbf{u}\ \ell
(u_{x}-W(t))\left\{ f_{0}(\mathbf{u})-f_{-}(t,\mathbf{x},\mathbf{u})\right\}
\\
&=&\int_{\partial \Omega _{L}\left( t\right) }dS_{\mathbf{x}%
}\int_{u_{x}\geqslant W\left( t\right) }d\mathbf{u}\ \ell (u_{x}-W(t))[f_{0}(%
\mathbf{u})-f_{+}\left( \tau ,\mathbf{\xi },\mathbf{u}\right) \chi
_{1}\left( t,\mathbf{u}\right) -f_{0}(\mathbf{u})\chi _{0}\left( t,\mathbf{u}%
\right) ] \\
&=&\int_{\partial \Omega _{L}\left( t\right) }dS_{\mathbf{x}%
}\int_{\left\vert u_{\bot }\right\vert \leqslant \frac{2r}{t-\tau }}du_{\bot
}\int_{W\left( t\right) }^{\left\langle W\right\rangle _{t}}du_{x}\ \ell
(u_{x}-W(t))(f_{0}(\mathbf{u})-f_{+}\left( \tau ,\mathbf{\xi },\mathbf{u}%
\right) ).
\end{eqnarray*}
With (\ref{Lemma:RfUpperBound}) we obtain 
\begin{equation*}
r_{W}^{L}\left( t\right) \geqslant \int_{\partial \Omega _{L}\left( t\right)
}dS_{\mathbf{x}}\int_{\left\vert u_{\bot }\right\vert \leqslant \frac{2r}{%
t-\tau }}b(v_{\bot })du_{\bot }\int_{W\left( t\right) }^{\left\langle
W\right\rangle _{t}}du_{x}\ \ell (u_{x}-W(t))(a_{0}(u_{x})-a_{0}(V_{\infty
})+\frac{\delta }{2}).
\end{equation*}%
For small enough $\gamma $, we have by continuity of $a_{0}$ that%
\begin{equation*}
a_{0}(u_{x})-a_{0}(V_{\infty })+\frac{\delta }{2}\geqslant \frac{\delta }{4}%
>0,
\end{equation*}%
which implies 
\begin{eqnarray*}
r_{W}^{L}\left( t\right) &\geqslant &C\frac{\delta }{4}\int_{\left\vert
u_{\bot }\right\vert \leqslant \frac{2r}{t-\tau }}b(u_{\perp })du_{\bot
}\int_{W\left( t\right) }^{\left\langle W\right\rangle _{t}}du_{x}\ell
(u_{x}-W(t)) \\
&\geqslant &C\frac{\delta }{4}\frac{\left( \left\langle W\right\rangle
_{t}-W(t)\right) ^{p+1}}{\left( 1+t\right) ^{d-1}}
\end{eqnarray*}
Recalling Lemma \ref{Lemma:R<W>t-W(t)}, we have%
\begin{equation*}
\left\langle W\right\rangle _{t}-W(t)\geqslant \frac{C\gamma }{t}\chi
\left\{ t\geqslant t_{0}\right\} ,
\end{equation*}%
whence 
\begin{equation*}
r_{W}^{L}\left( t\right) \geqslant C\frac{\gamma ^{p+1}}{t^{d+p}}\chi
\left\{ t\geqslant t_{0}\right\}
\end{equation*}%
for small enough $\gamma $. This is the desired lower bound of $r_{W}^{L}$.

For the upper bound of $r_{W}^{L},$ we have%
\begin{eqnarray*}
\left\vert r_{W}^{L}\left( t\right) \right\vert &=&\left\vert \int_{\partial
\Omega _{L}\left( t\right) }dS_{\mathbf{x}}\int_{\left\vert u_{\bot
}\right\vert \leqslant \frac{2r}{t-\tau }}du_{\bot }\int_{W\left( t\right)
}^{\left\langle W\right\rangle _{t}}du_{x}\ \ell (u_{x}-W(t))(f_{0}(\mathbf{u%
})-f_{+}\left( \tau ,\mathbf{\xi },\mathbf{u}\right) )\right\vert \\
&\leqslant &\int_{\partial \Omega _{L}\left( t\right) }dS_{\mathbf{x}%
}\int_{\left\vert u_{\bot }\right\vert \leqslant \frac{2r}{t-\tau }}du_{\bot
}\int_{W\left( t\right) }^{\left\langle W\right\rangle _{t}}du_{x}\ \ell
(u_{x}-W(t))\left\vert f_{0}(\mathbf{u})-f_{+}\left( \tau ,\mathbf{\xi },%
\mathbf{u}\right) \right\vert \\
&\leqslant &C\int_{\partial \Omega _{L}\left( t\right) }dS_{\mathbf{x}%
}\int_{W\left( t\right) }^{\left\langle W\right\rangle _{t}}du_{x}\
\int_{\left\vert u_{\bot }\right\vert \leqslant \frac{2r}{t-\tau }}du_{\bot
}\ell (u_{x}-W(t))b(u_{\bot }).
\end{eqnarray*}%
Splitting the integral according to the regions $\tau <t/2$ and $\tau \geq
t/2$, we have 
\begin{eqnarray*}
r_{W}^{L}\left( t\right) &\leqslant &\frac{C\left( \left\langle
W\right\rangle _{t}-W(t)\right) ^{p+1}}{\left( 1+t\right) ^{d-1}}%
+C\int_{W\left( t\right) }^{\left\langle W\right\rangle
_{t}}du_{x}\int_{\left\vert u_{\bot }\right\vert \leqslant \frac{2r}{t-\tau }%
\text{,}\tau \geqslant \frac{t}{2}}du_{\bot }\ell (u_{x}-W(t))b(u_{\bot }) \\
&=&I+II.
\end{eqnarray*}%
By Assumption A3 and Lemma \ref{Lemma:R<W>t-W(t)},%
\begin{equation*}
I\leqslant \frac{C\left( \frac{C\left( \gamma +\gamma ^{p+1}A_{+}\right) }{%
1+t}\right) ^{p+1}}{\left( 1+t\right) ^{d-1}}\leqslant C\frac{\left( \gamma
+\gamma ^{p+1}A_{+}\right) ^{p+1}}{\left( 1+t\right) ^{d+p}}.
\end{equation*}
For the second term, by the precollision condition (\ref%
{condition:recollision condition}), we notice that 
\begin{eqnarray*}
u_{x} &=&\left\langle W\right\rangle _{\tau ,t}\ \leqslant \ V_{\infty
}-\gamma \left\langle h\right\rangle _{\tau ,t} \\
&\leqslant &V_{\infty }+\sup_{\frac{t}{2}\leqslant \tau \leqslant t}\frac{%
\gamma }{t-\tau }\int_{\tau }^{t}\left( e^{-B_{\infty }r}-\frac{\gamma
^{p}A_{-}}{r^{p+d}}\chi \left\{ r\geqslant t_{0}+1\right\} \right) dr \\
&\leqslant &V_{\infty }+C\gamma e^{-B_{\infty }\frac{t}{2}}.
\end{eqnarray*}%
By Assumption A3 again, this inequality allows us to estimate the second
term as 
\begin{eqnarray*}
II &\leqslant &C\int_{W\left( t\right) }^{V_{\infty }+C\gamma e^{-B_{\infty }%
\frac{t}{2}}}du_{x}\int_{\left\vert u_{\bot }\right\vert \leqslant \frac{2r}{%
t-\tau }\text{,}\tau \geqslant \frac{t}{2}}du_{\bot }\ell (u_{x}-W(t))\
b(u_{\bot }) \\
&\leqslant &C\int_{0}^{V_{\infty }-W(t)+C\gamma e^{-B_{\infty }\frac{t}{2}%
}}dz\left\vert z\right\vert ^{p} \\
&\leqslant &C\left( V_{\infty }-W(t)+C\gamma e^{-B_{\infty }\frac{t}{2}%
}\right) ^{p+1}.
\end{eqnarray*}%
By definition of $\mathcal{W}$, we have 
\begin{equation*}
V_{\infty }-W(t)\leqslant -\gamma e^{-B_{0}t}+\frac{\gamma ^{p+1}A_{+}}{%
\left\langle t\right\rangle ^{p+d}}\leqslant \frac{\gamma ^{p+1}A_{+}}{%
\left\langle t\right\rangle ^{p+d}},
\end{equation*}%
so that 
\begin{equation*}
II\leqslant \left( \frac{\gamma ^{p+1}A_{+}}{\left\langle t\right\rangle
^{p+d}}+C\gamma e^{-B_{\infty }\frac{t}{2}}\right) ^{p+1}\leqslant \ C\left( 
\frac{C\gamma +\gamma ^{p+1}A_{+}}{\left\langle t\right\rangle ^{p+d}}%
\right) ^{p+1}.
\end{equation*}%
Putting $I$ and $II$ together, we have%
\begin{equation*}
r_{W}^{L}\left( t\right) \leqslant C\frac{\left( \gamma +\gamma
^{p+1}A_{+}\right) ^{p+1}}{\left( 1+t\right) ^{d+p}},
\end{equation*}%
which is the claimed upper bound for $r_{W}^{L}\left( t\right) $.
\end{proof}

%%%%%%%%%%%%%%%%%%%%%%%

\subsection{The Right Side\label{sec:Rright}}

\begin{lemma}
\label{Lemma:UpperBoundOfR-}Under the same assumptions as in Lemma \ref%
{Lemma:Upper and lower bound of R+}, for $\gamma $ small enough, there is a $%
\varepsilon >0$ such that 
\begin{equation*}
\left\vert r_{W}^{R}\left( t\right) \right\vert \leqslant \frac{C\gamma
^{\left( 1+\varepsilon \right) \left( p+1\right) }A_{+}^{p+1}}{t^{p+d}}\chi
\left\{ t\geqslant t_{0}\right\} .
\end{equation*}
\end{lemma}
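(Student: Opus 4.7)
The plan is to follow the structure of Lemma \ref{Lemma:IRRight}, substituting the reversal-case analogs throughout. The argument divides into the monotone regime $t \le t_0$ and the iterated regime $t > t_0$.

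For $t \in [0, t_0]$, the function $W$ is decreasing by condition (ii) of Definition \ref{def:W-R}, and the same monotonicity argument as in Lemma \ref{Lemma:IRRight} shows $r_W^R(t) = 0$: any right-to-right recollision is ruled out because the geometry forces $u_x = \langle W \rangle_{\tau,t} < W(\tau)$ while the outgoing-from-right condition requires $u_x > W(\tau)$, and any left-to-right recollision is ruled out because the geometry forces $u_x > \langle W \rangle_{\tau,t} > W(t)$ while the incoming-right condition requires $u_x \le W(t)$.

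For $t > t_0$, I iterate the right-side boundary condition via $f_-(t, x, u) = f_+(\tau, \xi, u)\chi_1(t, u) + f_0(u)\chi_0(t, u)$, apply the uniform bound $f_+(\tau, \xi, u) \le C b(u_\perp)$ established in \eqref{Lemma:RfUpperBound} (which exploited the Reversal Criterion), and obtain
\[
|r_W^R(t)| \le C \int_{|u_\perp| \le 2r/(t - \tau)} b(u_\perp) \, du_\perp \int_{\inf_{s<t} \langle W \rangle_{s,t}}^{W(t)} \ell(u_x - W(t)) \, du_x.
\]
Splitting at $\tau = t/2$ into $I$ (where $\int du_\perp = O(t^{1-d})$) and $II$ (where the $u_\perp$ integral is $O(1)$), and combining $W(t) \le V_\infty + \gamma e^{-B_\infty t}$ from Definition \ref{def:W-R} with \eqref{estimate:inf_wW_st}, I get $W(t) - \inf_{s<t}\langle W\rangle_{s,t} \le \gamma e^{-B_\infty t} + C\gamma^{p+1}A_+/\langle t\rangle^{p+d}$. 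Assumption A3 then yields $I \le C(\gamma e^{-B_\infty t} + \gamma^{p+1}A_+/\langle t\rangle^{p+d})^{p+1}/(1+t)^{d-1}$ and $II \le C(\gamma e^{-B_\infty t} + \gamma^{p+1}A_+/\langle t\rangle^{p+d})^{p+1}$.

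The main obstacle, and the reason for the exponent $(1+\varepsilon)(p+1)$, is converting the exponential factor $\gamma e^{-B_\infty t}$ into an improved power of $\gamma$. This is exactly what the choice $t_0 = K_0|\ln\gamma|$ with $K_0 \in [1/B_0, 2/B_0]$ in Definition \ref{def:W-R} is engineered for: since $B_\infty/B_0 \to 1$ as $\gamma \to 0$ by the $C^1$ regularity of $F_0$, one may pick $K_0$ near $2/B_0$ so that $B_\infty K_0 \ge 1 + \varepsilon_0$ for some $\varepsilon_0 > 0$, giving $\gamma e^{-B_\infty t} \le \gamma^{1 + \varepsilon_0}$ for every $t \ge t_0$. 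Using $(a+b)^{p+1} \le 2^p(a^{p+1}+b^{p+1})$, the bound on $II$ splits into a $\gamma^{(1+\varepsilon_0)(p+1)}$ piece and a $\gamma^{(p+1)^2}A_+^{p+1}/t^{(p+d)(p+1)}$ piece; each can be compared to the target $C\gamma^{(1+\varepsilon)(p+1)}A_+^{p+1}/t^{p+d}$ for any $\varepsilon < \min(p,\varepsilon_0)$ and small enough $\gamma$, with only polylogarithmic residuals $(\ln(1/\gamma))^{p+d}$ that are absorbed into a small extra power of $\gamma$. The same reasoning handles $I$, where the slower $1/(1+t)^{d-1}$ factor is compensated either by the superpolynomial decay $e^{-(p+1)B_\infty t}$ when the exponential piece dominates, or by the extra factor $1/t^{(p+d)p + d - 1}$ when the polynomial piece dominates.
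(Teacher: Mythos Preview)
Your overall strategy is right, and the handling of $t\le t_0$, the iteration of $f_-$, the uniform bound on $f_+$, and the split at $\tau=t/2$ all mirror the paper. However, two steps in the $t\ge t_0$ regime do not go through as written.

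First, \eqref{estimate:inf_wW_st} gives only $\inf_{s<t}\langle W\rangle_{s,t}\ge V_\infty - C\gamma^{p+1}A_+/\langle t\rangle$, not $/\langle t\rangle^{p+d}$: the infimum over $s$ can be realized near $s=0$, and then the average $\tfrac{1}{t-s}\int_s^t\langle r\rangle^{-(p+d)}\,dr$ is only $O(1/t)$. With the correct power, bounding $II$ by the full $u_x$-range $[\inf_{s<t}\langle W\rangle_{s,t},W(t)]$ produces a polynomial piece $\gamma^{(p+1)^2}A_+^{p+1}/\langle t\rangle^{p+1}$, which for $d\ge 2$ cannot be absorbed into the target $\gamma^{(1+\varepsilon)(p+1)}A_+^{p+1}/t^{p+d}$. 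The paper's remedy is to exploit the restriction $\tau\ge t/2$ for $II$: since then $u_x=\langle W\rangle_{\tau,t}$ with $\tau\ge t/2$, one obtains the sharper lower bound $u_x\ge V_\infty - C\gamma^{p+1}A_+/\langle t\rangle^{p+d}$, and only with this does the polynomial piece of $II$ decay fast enough.

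Second, and more seriously, your treatment of the exponential piece in $II$ discards all $t$-decay via $\gamma e^{-B_\infty t}\le \gamma^{1+\varepsilon_0}$. The resulting term $\gamma^{(1+\varepsilon_0)(p+1)}$ is constant in $t$ and therefore cannot satisfy $\le C\gamma^{(1+\varepsilon)(p+1)}A_+^{p+1}/t^{p+d}$ as $t\to\infty$; your ``polylogarithmic residual'' reasoning controls the ratio only at $t=t_0$, not for all $t\ge t_0$. The paper's fix is to split the exponential multiplicatively,
\[
e^{-B_\infty t}=e^{-(1-\varepsilon)B_\infty t}\cdot e^{-\varepsilon B_\infty t}\le e^{-(1-\varepsilon)B_\infty t}\,\gamma^{\varepsilon/2}\quad(t\ge t_0),
\]
using only $B_\infty K_0\ge B_\infty/B_0\ge 1-\varepsilon$. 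A factor $e^{-(1-\varepsilon)(p+1)B_\infty t}$ then survives and dominates $t^{p+d}$ uniformly, so one retains both the extra $\gamma$-power and the required polynomial-in-$t$ decay simultaneously. Your argument secures the former but loses the latter.
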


\begin{proof}
As in the begining of the proof of Lemma \ref{Lemma:IRRight}, we have $%
r_{W}^{R}\left( t\right) =0$ for all $t\leqslant t_{0}$ because $W$ is
decreasing. Setting $t\geqslant t_{0}$ and recalling the boundary condition
on the right side of the cylinder, we have 
\begin{equation*}
f_{+}(t,\mathbf{x};\mathbf{v})=\int_{u_{x}\leqslant W\left( t\right)
}K\left( \mathbf{v}-\mathbf{i}W\left( t\right) ;\mathbf{u}-\mathbf{i}W\left(
t\right) \right) f_{-}(t,\mathbf{x};\mathbf{u})d\mathbf{u}.
\end{equation*}
Writing 
\begin{equation*}
f_{-}(t,\mathbf{x};\mathbf{u})=f_{+}(\tau ,\mathbf{\xi };\mathbf{u})\chi
_{1}(t,\mathbf{u})+f_{0}\left( \mathbf{u}\right) \chi _{0}(t,\mathbf{u}),
\end{equation*}%
we have 
\begin{eqnarray*}
f_{+}(t,\mathbf{x};\mathbf{v}) &=&b(v_{\bot })\int_{u_{x}\leqslant W\left(
t\right) }k(v_{x}-W(t),u_{x}-W(t)) \\
&&\times \left\{ a_{+}(\tau ,\mathbf{\xi ;}u_{x})b(u_{\bot })\chi _{1}(t,%
\mathbf{u})+a_{0}(u_{x})b(u_{\bot })\chi _{0}(t,\mathbf{u})\right\} d\mathbf{%
u}.
\end{eqnarray*}%
We then estimate 
\begin{eqnarray*}
f_{+}(t,\mathbf{x};\mathbf{v}) &\leqslant &b(v_{\bot
})\int_{\inf_{s<t}\left\langle W\right\rangle _{s,t}}^{W\left( t\right)
}k(v_{x}-W(t),u_{x}-W(t))a_{+}(\tau ,\mathbf{\xi ;}u_{x})du_{x} \\
&&+b(v_{\bot })\int_{-\infty }^{W\left( t\right)
}k(v_{x}-W(t),u_{x}-W(t))a_{0}(u_{x})du_{x}
\end{eqnarray*}
The last integral is uniformly bounded due to Assumption A4. For the other
integral we need an estimate on the size of $W\left( t\right)
-\inf_{s<t}\left\langle W\right\rangle _{s,t}$. With (\ref%
{estimate:inf_wW_st}) and%
\begin{equation*}
W\left( t\right) \leqslant V_{\infty }-\gamma h(t)\leqslant V_{\infty
}+\gamma e^{-B_{\infty }t},
\end{equation*}%
we get%
\begin{equation}
W\left( t\right) -\inf_{s<t}\left\langle W\right\rangle _{s,t}\leqslant
\gamma e^{-B_{\infty }t}+\frac{C\gamma ^{p+1}A_{+}}{\left\langle
t\right\rangle }.  \label{estimate:W(t)-inf}
\end{equation}%
Thus we can estimate%
\begin{equation*}
b(v_{\bot })\int_{\inf_{s<t}\left\langle W\right\rangle _{s,t}}^{W\left(
t\right) }k(v_{x}-W(t),u_{x}-W(t))a_{+}(\tau ,\mathbf{\xi ;}%
u_{x})du_{x}\leqslant b(v_{\bot })C\gamma a_{+}^{\ast },
\end{equation*}%
where $a_{+}^{\ast }$ is as in (\ref{def:a+*}), so that we have $b(v_{\bot
})a_{+}^{\ast }\leqslant b(v_{\bot })C\gamma a_{+}^{\ast }+Cb(v_{\bot })$.
That is, 
\begin{equation*}
b(v_{\bot })a_{+}^{\ast }\leqslant Cb(v_{\bot })
\end{equation*}%
for small enough $\gamma $. With this upper bound of $f_{+}(\tau ,\mathbf{%
\xi },u)$ we arrive at 
\begin{eqnarray*}
\left\vert r_{W}^{R}\left( t\right) \right\vert &=&\left\vert \int_{\partial
\Omega _{R}(t)}dS_{x}\int_{u_{x}\leqslant W\left( t\right) }d\mathbf{u}\
\ell (u_{x}-W(t))\left\{ f_{-}(t,\mathbf{x};\mathbf{u})-f_{0}(\mathbf{u}%
)\right\} \right\vert \\
&=&\left\vert \int_{\partial \Omega _{R}(t)}dS_{x}\int_{u_{x}\leqslant
W\left( t\right) }d\mathbf{u}\ \ell (u_{x}-W(t))\left\{ f_{+}(\tau ,\mathbf{%
\xi };\mathbf{u})\chi _{1}\left( t,\mathbf{u}\right) +f_{0}(\mathbf{u})\chi
_{0}(t,\mathbf{u})-f_{0}(\mathbf{u})\right\} \right\vert \\
&=&\left\vert \int_{\partial \Omega
_{R}(t)}dS_{x}\int_{\inf_{s<t}\left\langle W\right\rangle _{s,t}}^{W(t)}d%
\mathbf{u}\ \ell (u_{x}-W(t))\left\{ f_{+}(\tau ,\mathbf{\xi };\mathbf{u}%
)-f_{0}(\mathbf{u})\right\} \right\vert \\
&\leqslant &\int_{\partial \Omega _{R}(t)}dS_{x}\int_{\inf_{s<t}\left\langle
W\right\rangle _{s,t}}^{W(t)}d\mathbf{u}\ \ell (u_{x}-W(t))\left( f_{+}(\tau
,\mathbf{\xi };\mathbf{u})+f_{0}(\mathbf{u})\right) \\
&\leqslant &C\int_{\left\vert u_{\bot }\right\vert \leqslant \frac{2r}{%
t-\tau }}du_{\bot }\int_{\inf_{s<t}\left\langle W\right\rangle
_{s,t}}^{W(t)}du_{x}\ \ell (u_{x}-W(t))b(u_{\bot }).
\end{eqnarray*}%
Split the integral at $\tau =t/2$. As distinguished from the irreversal
case, the $\tau <\frac{t}{2}$ part provides the dominant contribution this
time. We have%
\begin{equation*}
I=C\int_{\inf_{s<t}\left\langle W\right\rangle _{s,t}}^{W(t)}du_{x}\
\int_{\left\vert u_{\bot }\right\vert \leqslant \frac{2r}{t-\tau },\tau <%
\frac{t}{2}}du_{\bot }\ell (u_{x}-W(t))b(u_{\bot })\leqslant C\frac{\left(
W(t)-\inf_{s<t}\left\langle W\right\rangle _{s,t}\right) ^{p+1}}{\left(
1+t\right) ^{d-1}}
\end{equation*}
Plugging (\ref{estimate:W(t)-inf}) into this expression, we have:%
\begin{equation*}
I\leqslant \frac{C}{\left( 1+t\right) ^{d-1}}\left( \gamma e^{-B_{\infty }t}+%
\frac{C\gamma ^{p+1}A_{+}}{\left\langle t\right\rangle }\right) ^{p+1}
\end{equation*}%
for $t\geqslant t_{0}$.

On the other hand, for the $\tau \in \left[ \frac{t}{2},t\right] $ part, we
know that 
\begin{eqnarray*}
u_{x} &=&\left\langle W\right\rangle _{\tau ,t}\ \geqslant \ V_{\infty
}-\gamma \left\langle g\right\rangle _{\tau ,t} \\
&\geqslant &V_{\infty }+\inf_{\frac{t}{2}\leqslant \tau \leqslant t}\frac{%
\gamma }{t-\tau }\int_{\tau }^{t}\left( e^{-B_{0}r}-\frac{\gamma ^{p}A_{+}}{%
\left\langle r\right\rangle ^{p+d}}\right) dr \\
&\geqslant &V_{\infty }+\inf_{\frac{t}{2}\leqslant \tau \leqslant t}\frac{%
\gamma }{t-\tau }\int_{\tau }^{t}\left( -\frac{\gamma ^{p}A_{+}}{%
\left\langle r\right\rangle ^{p+d}}\right) dr\ \geqslant \ V_{\infty }-C%
\frac{\gamma ^{p+1}A_{+}}{\left\langle t\right\rangle ^{p+d}},
\end{eqnarray*}
which yields%
\begin{eqnarray*}
II &=&C\int_{\inf_{s<t}\left\langle W\right\rangle _{s,t}}^{W(t)}du_{x}\
\int_{\left\vert u_{\bot }\right\vert \leqslant \frac{2r}{t-\tau },\frac{t}{2%
}\leqslant \tau \leqslant t}du_{\bot }\ell (u_{x}-W(t))b(u_{\bot }) \\
&\leqslant &C\int_{V_{\infty }-C\frac{\gamma ^{p+1}A_{+}}{\left\langle
t\right\rangle ^{p+d}}}^{W(t)}\ell (u_{x}-W(t))du_{x} \\
&\leqslant &C\left\vert W(t)-V_{\infty }-C\frac{\gamma ^{p+1}A_{+}}{%
\left\langle t\right\rangle ^{p+d}}\right\vert ^{p+1}\ .
\end{eqnarray*}
By (\ref{r:g and h}), we estimate%
\begin{eqnarray*}
&&\left\vert W(t)-V_{\infty }-C\frac{\gamma ^{p+1}A_{+}}{\left\langle
t\right\rangle ^{p+d}}\right\vert \\
&\leqslant &\max \left\{ \gamma e^{-B_{\infty }t},\gamma e^{-B_{0}t}\right\}
+\max \left\{ \frac{\gamma ^{p+1}A_{+}}{t^{p+d}}\chi \left\{ t\geqslant
t_{0}+1\right\} ,\frac{\gamma ^{p+1}A_{+}}{\left\langle t\right\rangle ^{p+d}%
}\right\} +C\frac{\gamma ^{p+1}A_{+}}{\left\langle t\right\rangle ^{p+d}} \\
&\leqslant &C\left( \gamma e^{-B_{\infty }t}+\frac{\gamma ^{p+1}A_{+}}{%
\left\langle t\right\rangle ^{p+d}}\right) .
\end{eqnarray*}%
where in the last line we used the fact that $B_{\infty }\leqslant B_{0}$
and $A_{-}\leqslant A_{+}$. Indeed, for large $t$, we have 
\begin{equation*}
-g(t,\gamma )\sim -\frac{\gamma ^{p}A_{+}}{\left\langle t\right\rangle ^{p+d}%
}\text{ and }-h(t,\gamma )\sim -\frac{\gamma ^{p}A_{-}}{\left\langle
t\right\rangle ^{p+d}}
\end{equation*}%
so that we require $A_{+}\geqslant A_{-}$ to make sure that $V_{\infty
}-\gamma g(t,\gamma )\leqslant V_{\infty }-\gamma h(t,\gamma )$.

With the estimates for $I$ and $II$ in hand, we have%
\begin{eqnarray}
\left\vert r_{W}^{R}\left( t\right) \right\vert &\leqslant &\frac{C}{\left(
1+t\right) ^{d-1}}\left( \gamma e^{-B_{\infty }t}+\frac{C\gamma ^{p+1}A_{+}}{%
\left\langle t\right\rangle }\right) ^{p+1}\chi \left\{ t\geqslant
t_{0}\right\}  \label{R:UpperRight1} \\
&&+C\left( \gamma e^{-B_{\infty }t}+\frac{\gamma ^{p+1}A_{+}}{\left\langle
t\right\rangle ^{p+d}}\right) ^{p+1}\chi \left\{ t\geqslant t_{0}\right\} . 
\notag
\end{eqnarray}%
Because on its face estimate (\ref{R:UpperRight1}) alone is not enough to
deduce for the reversal case that 
\begin{equation*}
r_{W}^{L}\left( t\right) +r_{W}^{R}\left( t\right) \geqslant 0,
\end{equation*}%
we now give a more precise bound. Let us recall from Definition \ref{def:W-R}
that $t_{0}=K_{0}\left\vert \log \gamma \right\vert $ with $\frac{1}{B_{0}}%
\leqslant K_{0}\leqslant \frac{2}{B_{0}}.$ The key here is to choose $\gamma 
$ small enough so that the quantity $B_{\infty }K_{0}$ can be bounded below
by 
\begin{equation*}
B_{\infty }K_{0}\geqslant \frac{B_{\infty }}{B_{0}}=\frac{\min_{V\in \left[
V_{\infty }-\gamma ,V_{\infty }+\gamma \right] }F_{0}^{\prime }(V)}{%
\max_{V\in \left[ V_{\infty }-\gamma ,V_{\infty }+\gamma \right]
}F_{0}^{\prime }(V)}\geqslant 1-\varepsilon \text{, }\varepsilon >0
\end{equation*}%
which can be made close to $1$. For $t\geqslant t_{0}$, we write 
\begin{eqnarray*}
e^{-B_{\infty }t} &=&e^{-(1-\varepsilon )B_{\infty }t}e^{-\varepsilon
B_{\infty }t}\leqslant e^{-(1-\varepsilon )B_{\infty }t}e^{-\varepsilon
B_{\infty }t_{0}} \\
&\leqslant &e^{-(1-\varepsilon )B_{\infty }t}e^{\varepsilon B_{\infty
}K_{0}\log \gamma }=e^{-(1-\varepsilon )B_{\infty }t}\gamma ^{\varepsilon
B_{\infty }K_{0}}\leqslant e^{-(1-\varepsilon )B_{\infty }t}\gamma ^{\frac{%
\varepsilon }{2}},
\end{eqnarray*}%
because $\gamma <1.$ By the choice $\varepsilon \in \left( 0,2p\right) $ and
because $A_{+}>0$, estimate (\ref{R:UpperRight1}) becomes%
\begin{eqnarray*}
\left\vert r_{W}^{R}\left( t\right) \right\vert &\leqslant &\frac{C}{\left(
1+t\right) ^{d-1}}\left( \gamma e^{-(1-\varepsilon )B_{\infty }t}\gamma ^{%
\frac{\varepsilon }{2}}+\frac{C\gamma ^{p+1}A_{+}}{\left\langle
t\right\rangle }\right) ^{p+1}\chi \left\{ t\geqslant t_{0}\right\} \\
&&+C\left( \gamma e^{-(1-\varepsilon )B_{\infty }t}\gamma ^{\frac{%
\varepsilon }{2}}+\frac{\gamma ^{p+1}A_{+}}{\left\langle t\right\rangle
^{p+d}}\right) ^{p+1}\chi \left\{ t\geqslant t_{0}\right\} \\
&\leqslant &\frac{C\gamma ^{\left( 1+\frac{\varepsilon }{2}\right) \left(
p+1\right) }A_{+}^{p+1}}{\left\langle t\right\rangle ^{p+d}}\chi \left\{
t\geqslant t_{0}\right\} +\frac{C\gamma ^{\left( 1+\frac{\varepsilon }{2}%
\right) \left( p+1\right) }A_{+}^{p+1}}{\left\langle t\right\rangle ^{\left(
p+d\right) \left( p+1\right) }}\chi \left\{ t\geqslant t_{0}\right\} .
\end{eqnarray*}%
Since $p>0$, the second term is absorbed into the first term for large
enough $t_{0}$, so that%
\begin{equation*}
\left\vert r_{W}^{R}\left( t\right) \right\vert \leqslant \frac{C\gamma
^{\left( 1+\frac{\varepsilon }{2}\right) \left( p+1\right) }A_{+}^{p+1}}{%
\left\langle t\right\rangle ^{p+d}}\chi \left\{ t\geqslant t_{0}\right\}
\end{equation*}%
as claimed.
\end{proof}

Putting together Lemmas \ref{Lemma:Upper and lower bound of R+} and \ref%
{Lemma:UpperBoundOfR-}, we conclude that 
\begin{equation*}
R_{W}(t)\geqslant \left( \frac{c_{1}\gamma ^{p+1}}{t^{p+d}}-\frac{%
C_{2}\gamma ^{\left( 1+\varepsilon \right) \left( p+1\right) }A_{+}^{p+1}}{%
t^{p+d}}\right) \chi \left\{ t\geqslant t_{0}\right\}
\end{equation*}%
and%
\begin{equation*}
R_{W}(t)\leqslant \frac{C_{1}\left( \gamma +\gamma ^{p+1}A_{+}\right) ^{p+1}%
}{\left( 1+t\right) ^{p+d}}+\frac{C_{2}\gamma ^{\left( 1+\varepsilon \right)
\left( p+1\right) }A_{+}^{p+1}}{t^{p+d}}\chi \left\{ t\geqslant
t_{0}\right\} .
\end{equation*}%
Theorem \ref{Thm:REstimate} then follows since $\gamma $ is small.

%%%%%%%%%%%%%%%%%%%%%%%%%%%%%%%%%%%%%%%%%

\end{document}